\newtheorem{theorem}{Theorem}[section]
\newtheorem{proposition}[theorem]{Proposition}
\newtheorem{corollary}[theorem]{Corollary}
\newtheorem{lemma}[theorem]{Lemma}
\newtheorem{remark}[theorem]{Remark}
\newtheorem{conjecture}[theorem]{Conjecture}
\theoremstyle{definition}
\newtheorem{definition}[theorem]{Definition}
\newtheorem{example}[theorem]{Example}
\newtheorem{question}[theorem]{Question}
\newcommand{\mcl}[1]{\mathcal{#1}}
\def\c{C_n^p}
\def\g{\mcl{G}_{mn}}
\def\gp{\mcl{G}_{mn}'}
\def\s{\sigma}
\def\t{\tau}
\def\iff{if and only if }
\newcommand{\D}[2]{\Delta_{#1}(#2)}
\newcommand{\Dt}[2]{\Delta_{#1}^t(#2)}
\newcommand{\un}[2]{#1\cup\{#2\}}
\newcommand{\di}[2]{#1\setminus\{#2\}}
\newcommand{\bb}[1]{\mathbb{#1}}
\newcommand{\m}[1]{m_{#1}}
\newcommand{\M}{\mathcal{M}}
\newcommand{\C}{\bm{\mathcal{C}}}
\newcommand{\X}{\mathcal{X}}
\newcommand{\bt}{\beta}
\newcommand{\homo}[2]{${#1}:\mathbb{Z}^{#2}$}
\newcommand\cfill[1]{\cellcolor{cyan!30}{#1}}
\begin{document}
	\title{Total $2$-cut complexes of powers of cycle graphs and Cartesian products of certain graphs}
	
	\author{Pratiksha Chauhan}
	\address{School of Mathematical and Statistical Sciences, IIT Mandi, India}
	\email{d22037@students.iitmandi.ac.in}
	
	\author{Samir Shukla}
	\address{School of Mathematical and Statistical Sciences, IIT Mandi, India}
	\email{samir@iitmandi.ac.in}
	
	\author{Kumar Vinayak}
	\address{Department of Mathematics, University of Kentucky, USA}
	\email{kumar.vinayak@uky.edu}
	
	\subjclass[2020]{Primary: 55P10, 57M15, 55U05,  Secondary: 05C69, 05E45, 57Q70}
	\keywords{Total cut complex, Cut complex, Powers of cycle graphs, Cartesian product of graphs, Circulant graphs, Homotopy, Discrete Morse theory}
	
	\begin{abstract}
		For a positive integer $k$, the {\it total $k$-cut complex} of a graph $G$, denoted as $\Dt{k}{G}$, is the simplicial complex whose facets are $\s \subseteq V(G)$ such that $|\s| = |V(G)|-k$ and the induced subgraph $G[V(G) \setminus \s]$ does not contain any edge.  
		These complexes were introduced by Bayer et al.\ in \cite{Bayer2024TotalCutcomplex} in connection with commutative algebra. In the same paper, they studied the homotopy types of these complexes for various families of graphs, including cycle graphs $C_n$, squared cycle graphs $C_n^2$, and  Cartesian products of complete graphs and path graphs $K_m \square P_2$ and $K_2 \square P_n$. 
		
		In this article, we extend the work of Bayer et al.\ for these families of graphs. 
		We focus on the complexes $\Dt{2}{G}$ and determine the homotopy types of these complexes for three classes of graphs: (i) $p$-th powers of cycle graphs $C_n^p$ (ii) $K_m \square P_n$ and (iii) $K_m \square C_n$. 
		Using discrete Morse theory, we show that these complexes are homotopy equivalent to wedges of spheres. We also give the number and dimension of spheres appearing in the homotopy type.
		
		Our result on powers of cycle graphs $C_n^p$ proves a conjecture of Shen et al.\ about the homotopy type of the complexes $\Delta_2^t(C_n^p)$.
		
	\end{abstract}
	
	\maketitle
	
	\section{Introduction}
	All graphs are assumed to be finite and simple, meaning that they have no loops or multiple edges. The vertex set and edge set of a graph $G$ are denoted by $V(G)$ and $E(G)$, respectively.
	
	Graph complexes are simplicial complexes constructed from graphs by encoding their combinatorial structure into higher-dimensional simplices. In topological combinatorics, graph complexes are one of the main objects of study.
	Various topological invariants of these complexes, such as homotopy type, connectivity, homology, and their refinements, provide powerful tools for understanding combinatorial properties of graphs.
	A classical and one of the most celebrated examples of graph complexes is the neighbourhood complex, introduced by L.\ Lov{\'a}sz in 1978 \cite{Lovasz1978}. Lov{\'a}sz gave a lower bound for the chromatic number of a graph in terms of the topological connectivity of its neighbourhood complex, and using this bound, he proved the Kneser conjecture \cite{Kneser}.  
	
	Graph complexes are prevalent across the literature, and have applications in various areas of mathematics, engineering, and data analysis.  They are central to graph theory and combinatorics,  where the topology of these complexes is related to the combinatorics of the underlying graphs (see \cite{BabsonKozlov2006, SamirAnuragPriyavrat2022, Matsushita2023, Meshulam2001, Meshulam2003, ShuklaSanthanam2023}),   and have found many uses in data analysis and machine learning through simplicial representation of networks and topological data analysis (see \cite{Carlsson2009, EdelsbrunnerHarer2010, GhristBassett2015}). 
	Graph complexes also appear extensively in  commutative algebra, where they are intricately related to monomial ideals and resolutions (see \cite{DochtermannEngstrom2009, DochtermannEngstrom2012, HerzogHibi2011, MillerSturmfels2005, ReinerRoberts2000, Stanley1996, Woodroofe2014}). 
	In representation theory, they are used in the study of symmetric group actions on simplicial complexes and associated homological representations
	(see \cite{ Bate_MatchingComplex2023, BjornerLovaszZivaljevic1994, BoucMatchingComplex}). 
	Graph complexes have connections to geometric group theory, where they serve as combinatorial models for classifying spaces and group actions (see \cite{Gromov87, ShuklaGuptaSarkar2025, Zaremsky2022}), and also have applications in random topology (see \cite{Bobrowaski2018, Kahle2014, MeshulamWallach2009, ShuklaYogeshwaran2020}). For comprehensive treatments of graph complexes, we refer the reader to the books \cite{JonssonBook},  \cite{Kozlov2008} and \cite{Matousek2003}. 
	
	Recently, Bayer et al.\ in \cite{Bayer2024TotalCutcomplex} and \cite{Bayer2024Cutcomplex} introduced two new families of graph complexes: total cut complexes and cut complexes.  
	For $k \geq 1$, the {\it total $k$-cut complex} of a graph $G$, denoted as $\Dt{k}{G}$, is the simplicial complex whose facets (maximal simplices) are $\s \subseteq V(G)$ such that $|\s| = |V(G)|-k$ (where $| \cdot |$ denotes cardinality) and the induced subgraph $G[V(G) \setminus \s]$ does not contain any edge (see Example \ref{example}). 
	The {\it $k$-cut complex} of a graph $G$, denoted as $\D{k}{G}$, is the simplicial complex whose facets are $\s \subseteq V(G)$ such that $|\s| = |V(G)|-k$ and the induced subgraph $G[V(G) \setminus \s]$ is disconnected. 
	One of the main motivations behind these complexes is a famous theorem of Ralf Fr{\"o}berg connecting commutative algebra and graph theory through topology (see \cite[Theorem 1]{Froberg1990}, \cite[p.\ 274]{Eagon1998}). 
	
	It can be easily observed that $\Dt{2}{G} = \D{2}{G}$. In fact, it is mentioned in \cite{Bayer2024TotalCutcomplex} that both the complexes $\Delta_k^t(G)$ and $\D{k}{G}$ can be considered as a generalization of the complex $\Dt{2}{G}$.
	
	\begin{example}\label{example}
		Let $G$ be the graph given in Figure \ref{Figure1}. Then the set of maximal simplices of $\Dt{2}{G}$ is $\{ \{1, 2, 3\}, \{1, 3, 5\},\{2, 3, 4\}, \{2, 4, 5\} \}$ (see Figure \ref{Figure2}).
	\end{example}
	\begin{figure}[h!]
		\begin{subfigure}[]{0.45\textwidth}
			\centering
			\begin{tikzpicture}[scale=.35,auto=left,every node/.style={circle,fill=blue!40, inner sep=1.5pt, font=\tiny}]
				\node (n1) at (1,1) {1};
				\node (n2) at (4.5, 1) {2};
				\node (n3) at (4.5, 4.5) {3};
				\node (n4) at (1, 4.5) {4};
				\node (n5) at (6.75, 2.75) {5};
				
				\foreach \from/\to in {n1/n2,n2/n3,n3/n4, n4/n1, n2/n5, n5/n3}
				\draw (\from) -- (\to);
			\end{tikzpicture}
			\caption{G}\label{Figure1}
		\end{subfigure}
		\begin{subfigure}[]{0.45\textwidth}
			\centering
			\begin{tikzpicture}[scale=0.25, vertices/.style={draw, fill=black, circle, inner sep=0.5pt}]
				\node[vertices, label=left:{\small{3}}] (a) at (0, 0) {};
				\node[vertices, label=right:{\small{1}}] (b) at (3.5, 0.2) {};
				\node[vertices, label=above:{\small{2}}] (c) at (5, -2) {};
				\node[vertices, label=right:{\small{5}}] (d) at (8,4) {};
				\node[vertices, label=right:{\small{4}}] (e) at (6, -5) {};
				
				\foreach \to/\from in {a/b,a/c,b/c, b/d, c/d, a/d, a/e, c/e, d/e}
				\draw [-] (\to)--(\from);
				
				\filldraw[fill=blue!30, draw=black] (0, 0)--(3.5, 0.2)--(5, -2)--cycle;
				\filldraw[fill=blue!30, draw=black] (0, 0)--(3.5, 0.2)--(8, 4)--cycle;
				\filldraw[fill=blue!30, draw=black] (0, 0)--(5, -2)--(6, -5)--cycle;
				\filldraw[fill=blue!30, draw=black] (5, -2)--(6, -5)--(8, 4)--cycle;
			\end{tikzpicture}
			\caption{$\Dt{2}{G}$}\label{Figure2}
		\end{subfigure}
		\caption{}
	\end{figure}
	
	In this article, we consider the total $2$-cut complexes (hence $2$-cut complexes) of three families of graphs: (i) powers of cycle graphs, (ii) Cartesian product of complete graphs and path graphs, and (iii) Cartesian product of complete graphs and cycle graphs.
	
	For a graph $G$ and a positive integer $p$, the $p$-th power graph of $G$ is a graph $G^p$ with $V(G^p)=V(G)$ and $\{u, v\}\in E(G^p)$ if and only if there exists a path between $u$ and $v$ of length at most $p$ in $G$ (here, the length of a path is the number of edges in the path). Clearly, $G^1 = G$.
	
	Let $\Gamma$ be a group and let $S\subset\Gamma$ be a subset not containing the identity element.
	The \textit{Cayley graph}  of $\Gamma$ with respect to $S$ is the graph $G(\Gamma,S)$ having vertex set $\Gamma$, and   $\{u, v\}\in E(G(\Gamma,S))$ if and only if $uv^{-1} \in S \cup S^{-1}$, where $S^{-1} = \{x^{-1} \ | \ x \in S\}$.
	For $n\geq 2$ and $S \subset \{ 1, 2, \ldots, n-1\}$, the \textit{circulant graph} $C_n(S)$ is a Cayley graph of $\mathbb{Z}_n$, the cyclic group of order $n$. 
	
	For $n\geq 3$, let $C_n$ denote the cycle graph on $n$ vertices $\{0,1,\ldots,n-1\}$. Observe that for $p \geq 1$, $V(\c) = V(C_n) = \{0,1,\ldots,n-1\}$ and $E(\c) =\{ \{i,i+j\ (\text{mod $n$})\}\,|\,0\le i\le n-1 $ and $1\le j\le p$\}. 
	It is easy to check that for $n\ge 2p+1$, $\c$ is a circulant graph $C_n(\{1, 2,\ldots, p\})$.
	
	In \cite{Bayer2024TotalCutcomplex}, the authors studied $\Dt{k}{C_n}$ (Theorem 4.15) and $\Dt{2}{C_n^2}$ (Proposition 4.19). They proved that these complexes (if nonvoid) are homotopy equivalent to wedges of spheres. 
	In \cite{shen2025homotopy}, the authors determined the homotopy type of $\Dt{3}{C_n^2}$ (Theorem 1.5) and, for $k\ge 3$, the homotopy type of $\Delta_k^t(C_{n}^2)$ when $n = 3k+1,\,3k+2$ (Theorem 1.6). 
	
	The $k$-cut complexes of powers of cycle graphs have been studied by many authors. In \cite[Proposition 7.11]{Bayer2024Cutcomplex}, the authors proved that for $k \geq 3$, $\D{k}{C_n}$ is shellable\footnote{A simplicial complex $\Delta$ is called \textit{shellable} if its facets can be arranged in a linear order $F_1, F_2, \ldots, F_t$ in such a way that for all $2\leq j\leq t$, the subcomplex $(\bigcup_{i=1}^{j-1} F_i) \cap F_j$ is pure and of dimension $|F_j|-2$.}, and in \cite[Theorem 1.4]{Shellability2025}, it has been proved that $\D{3}{C_n^2}$ is shellable for $n\geq k+6$. 
	In \cite{ShuklaChauhan2025}, the authors proved that $\D{3}{\c}$ is shellable for $n\geq 6p-3$. Hence all of these complexes are wedges of spheres up to homotopy type. 
	
	For general $k \geq 2$ and $p \geq 2$, the topology of the complexes $\Dt{k}{\c}$ and $\D{k}{\c}$ is not known. For $k = 2$, Shen et al.\ made the following conjecture about the homotopy type of the complexes $\Dt{k}{\c}$. 
	
	\begin{conjecture}[{\cite[Conjecture 5.1]{shen2025homotopy}}]\label{conjecture:shen}
		For $p \ge 3$ and $n\ge 3p + 1$, $\Dt{2}{\c}\simeq\mathbb{S}^{n - 4}$.
	\end{conjecture}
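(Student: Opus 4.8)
Identify $V(\c)$ with $\mathbb{Z}_n=\{0,1,\dots,n-1\}$. The plan is to give a combinatorial description of $\Dt{2}{\c}$, then build an acyclic matching on its face poset having exactly two critical cells --- one of dimension $0$ and one of dimension $n-4$ --- and finally apply the fundamental theorem of discrete Morse theory, using that a CW complex with a single $0$-cell and a single $(n-4)$-cell is $\mathbb{S}^{n-4}$.

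First the combinatorics. Since $n\ge 3p+1$, a short argument about the gaps of a vertex subset on the cycle shows that every clique of $\c$ lies in an arc of $p+1$ consecutive vertices, so the maximal cliques of $\c$ are exactly the $n$ arcs $A_i=\{i,i+1,\dots,i+p\}$, $i\in\mathbb{Z}_n$. Hence the minimal non-faces of $\Dt{2}{\c}=\D{2}{\c}$ are the $n$ complementary arcs $B_i=\mathbb{Z}_n\setminus A_i$, each of length $n-p-1$, and so $\s\subseteq\mathbb{Z}_n$ is a face of $\Dt{2}{\c}$ precisely when $\s$ contains no $n-p-1$ cyclically consecutive vertices --- equivalently, every maximal run of $\s$ has length at most $n-p-2$. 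In particular every subset of size at most $n-p-2$ is a face, so $\Dt{2}{\c}$ contains the full $(n-p-3)$-skeleton of the simplex on $\mathbb{Z}_n$; being a subcomplex of that simplex obtained by attaching faces of dimension $\ge n-p-2$, it is $(n-p-4)$-connected, and since $n-p-4\ge 2p-3\ge 3$ when $p\ge3$ it is in particular simply connected. (This last point is not needed for the discrete Morse argument itself, but it explains why computing the reduced homology already pins down the homotopy type, and it gives a cross-check against Alexander duality.)

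The matching is built in rounds of element matchings. In round $1$, pair $\s$ with $\s\cup\{n-1\}$ whenever both are faces; the surviving critical faces are exactly the $\s$ with $n-1\notin\s$ for which $\mathbb{Z}_n\setminus\s$ does not lie in any arc $A_i$ although $(\mathbb{Z}_n\setminus\s)\setminus\{n-1\}$ does --- the complements that are ``too spread out'' only because of the vertex $n-1$. On the subposet of these critical faces the complement is, apart from $n-1$, confined to an arc, so toggling a second, suitably chosen vertex (an endpoint of that arc) matches away almost all of them; one iterates, invoking the Cluster Lemma (Patchwork Theorem) to ensure the composite matching stays acyclic, and is left with a short list of critical faces. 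The combinatorial core is to check that exactly one vertex and one face of dimension $n-4$ survive all rounds: the run-length description rules out critical faces of intermediate dimension, and the $n-2p-1$ facets $\mathbb{Z}_n\setminus\{a,n-1\}$ left critical after round $1$ all get matched in later rounds except for one. The hypothesis $n\ge 3p+1$ is exactly what forces this count down to $1$; for smaller $n$ --- for instance $n=2p+2$, where one gets $\Dt{2}{\c}\simeq\mathbb{S}^{p-1}$ --- additional critical cells persist, consistent with the complex then being a higher-dimensional sphere or a wedge of spheres.

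The expected obstacle is this last step: arranging the matching so that it is simultaneously acyclic and leaves exactly one critical cell above dimension $0$. Because $\c$ is vertex-transitive, any rule that scans outward from a fixed vertex breaks the $\mathbb{Z}_n$-symmetry, and the faces near the chosen seam require a separate, delicate analysis; tracking the critical faces through the successive element matchings, and proving that their number collapses to $1$ precisely when $n\ge 3p+1$, is the technical heart. A parallel route is combinatorial Alexander duality: $\Dt{2}{G}^{\vee}=\Cl(G)$, so $\widetilde H_i(\Dt{2}{\c})\cong\widetilde H^{\,n-i-3}(\Cl(\c))$, whence, together with the simple connectivity noted above, the conjecture is equivalent to $\Cl(\c)\simeq\mathbb{S}^1$ for $n\ge 3p+1$; this can be approached by peeling off the star and link of a vertex, but correctly identifying the homotopy type of $\Cl(\c)$ in exactly this range --- where the short ``bent'' cliques across the deleted vertex must be controlled --- is itself the crux, so the reformulation mostly relocates rather than removes the difficulty.
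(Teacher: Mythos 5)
Your overall plan coincides with the one the paper uses: describe $\Dt{2}{\c}$ combinatorially, run a sequence of element matchings glued together by the Cluster Lemma (Lemma \ref{lemma:Jonsson acyclic}), and reduce to a single critical cell of dimension $n-4$. Your preliminary observations are correct: for $n\ge 3p+1$ the maximal cliques of $\c$ are exactly the $n$ arcs of $p+1$ consecutive vertices, so the minimal non-faces of $\Dt{2}{\c}$ are the arcs of $n-p-1$ consecutive vertices, the complex contains the full $(n-p-3)$-skeleton of the simplex, and it is simply connected in the stated range. This ``run-length'' reformulation is a genuinely useful observation that the paper does not make explicit.

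The gap is exactly where you flag ``the expected obstacle'': after round $1$ the matching is never actually defined, and the critical-cell count is asserted rather than proved. ``Toggling a second, suitably chosen vertex (an endpoint of that arc)'' is not an element matching in the sense required by Lemma \ref{lemma:Jonsson acyclic}, because the chosen vertex varies with the face; one must fix one vertex per round and then determine, for that fixed vertex, which of the previously critical faces get paired. That bookkeeping is the entire content of the paper's proof (Propositions \ref{proposition:first matching}, \ref{proposition:n>=3p+1 and sigma in C_m-p}, \ref{proposition:matched in m-p}, \ref{proposition:in C_n-p} and Lemma \ref{lemma:n>=3p+1}), which shows inductively that after processing the vertices $0,1,\dots,n-p$ the unique survivor is $\t$ with $\t^c=\{0,\,n-2p,\,n-p\}$. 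Your sketch also contains an inconsistency that signals the matching has not been worked out: you claim that one of the $n-2p-1$ critical facets $\mathbb{Z}_n\setminus\{a,n-1\}$ survives all rounds, but a surviving facet would be a critical cell of dimension $n-3$, contradicting both your own stated target and the theorem; in fact all of those facets must be matched downward in later rounds, and the surviving critical face has a three-element complement. The Alexander-duality route you mention (reducing to $\Cl(\c)\simeq\mathbb{S}^1$, which together with simple connectivity would finish the proof) is a genuinely different and potentially shorter argument, but, as you concede, you have not established the homotopy type of $\Cl(\c)$ either. As it stands this is a plausible plan with the central combinatorial argument missing.
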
 
	
	One of the main results of this article is that Conjecture \ref{conjecture:shen} is true. It is easy to check that for $n < 2p+2$, the complex $\Dt{2}{\c}$ is void. For $n\geq 2p+2$, we prove the following.
	
	\begin{theorem}[Theorem \ref{theorem:powers_of_cycle}]\label{theorem:Intro_powers_of_cycle} 
		Let $n\geq 2p+2$. Then
		$$\Dt{2}{\c}\simeq 
		\begin{cases}
			\mathbb{S}^{\frac{n-4}{2}} & \text{ if }n=2p+2,\\
			\mathbb{S}^{n-4} & \text{ if }n\ge 3p+1.
		\end{cases}$$
	\end{theorem}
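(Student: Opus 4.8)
\emph{Reformulation.} A set $\tau\subseteq V(\c)$ is a face of $\Dt{2}{\c}$ exactly when $\s:=V(\c)\setminus\tau$ is \emph{not} contained in any arc of $p+1$ cyclically consecutive vertices; equivalently $\s$ contains a non-edge of $\c$ (two vertices at cyclic distance $\ge p+1$), equivalently the largest gap between cyclically consecutive elements of $\s$ is at most $n-p-1$. Consequently the minimal non-faces of $\Dt{2}{\c}$ are the complements of the maximal cliques of $\c$, so $\Dt{2}{\c}^{\vee}=\Cl(\c)$, the clique complex of $\c$, and combinatorial Alexander duality gives $\widetilde H_i(\Dt{2}{\c})\cong\widetilde H^{\,n-i-3}(\Cl(\c))$. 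The proof exploits two structural facts: if $n=2p+2$ then each vertex of $\c$ has a unique non-neighbour (its antipode), so $\c=K_{2,2,\dots,2}$ with $p+1$ parts of size $2$; if $n\ge 3p+1$ then, since $n-2p\ge p+1$, a short estimate shows every clique of $\c$ has cyclic spread $\le p$, so the maximal cliques of $\c$ are exactly the $n$ arcs of $p+1$ consecutive vertices.

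\emph{Case $n=2p+2$.} The non-edges of $\c$ are the $p+1$ antipodal pairs $e_0,\dots,e_p$, which partition $V(\c)$; thus $\Dt{2}{\c}$ is the union of the full simplices on the sets $V(\c)\setminus e_i$, and the intersection over any $I\subseteq\{0,\dots,p\}$ is the full simplex on $V(\c)\setminus\bigcup_{i\in I}e_i$, which is nonempty (hence contractible) iff $I\subsetneq\{0,\dots,p\}$. So the nerve of this cover is the boundary of a $p$-simplex, and the nerve lemma yields $\Dt{2}{\c}\simeq\mathbb S^{p-1}=\mathbb S^{(n-4)/2}$.

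\emph{Case $n\ge 3p+1$.} This is the heart of the matter, and the plan is discrete Morse theory. Because the maximal cliques of $\c$ are the $(p+1)$-arcs, $\Cl(\c)$ is the ``$(p+1)$-arc complex'' of $\mathbb Z_n$: every face $F$ with $|F|\ge 2$ has a unique minimal enclosing arc $[\ell,r]$, and pairing $F$ with $F\triangle\{\ell+1\}$ whenever $r\ge \ell+2$ is an acyclic matching leaving exactly the vertices and the consecutive edges unmatched; a further standard collapse of the resulting $1$-cycle $C_n$ then shows $\Cl(\c)\simeq\mathbb S^1$, so by Alexander duality $\Dt{2}{\c}$ has the reduced homology of $\mathbb S^{n-4}$. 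To obtain the homotopy type I would build, directly on the face poset of $\Dt{2}{\c}$, the matching dual to the one above: working with the complements $\s=V(\c)\setminus\tau$, process the vertices $0,1,\dots,n-1$ in cyclic order and, at each vertex $v$, pair $\s$ with $\s\triangle\{v\}$ whenever both remain not contained in a $(p+1)$-arc; the cells surviving all rounds should be exactly one of dimension $0$ and one of dimension $n-4$, giving $\Dt{2}{\c}\simeq\mathbb S^{n-4}$ by the Morse theorem. (Alternatively, for $n\ge 6$ the $2$-skeleton of $\Dt{2}{\c}$ is the full $2$-skeleton on $V(\c)$, so $\Dt{2}{\c}$ is simply connected and the homology computation already forces the homotopy equivalence by Hurewicz--Whitehead; the finitely many remaining small cases, $p=1$ with $n\in\{4,5\}$, are checked by hand.)

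\emph{Main obstacle.} The crux is the construction in the case $n\ge 3p+1$: defining the matching so that it is simultaneously acyclic and has precisely the two prescribed critical cells. The careful bookkeeping that the ``pinned'' complements collapse down to a single $(n-4)$-cell, and that acyclicity persists through the iteration --- for which the Cluster Lemma must be applied to a suitable rank function on the face poset, since a naive iteration of element matchings need not be acyclic --- is where the real work lies; the reformulation, the clique-complex/Alexander-duality dictionary, the nerve computation for $n=2p+2$, the clique-spread estimate, and the simple-connectivity upgrade are all routine.
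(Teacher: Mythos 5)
Your proposal is correct in substance but takes a genuinely different route from the paper's. The paper works entirely inside $\Dt{2}{\c}$: it iterates element matchings over the vertices $0,1,\ldots,n-1$ (acyclicity is free by Jonsson's lemma on sequences of element matchings) and then, through a long case analysis, shows the unique critical face is $\{1,\ldots,p\}$ when $n=2p+2$ and the $(n-4)$-dimensional face with complement $\{0,n-2p,n-p\}$ when $n\ge 3p+1$. Your nerve-lemma computation for $n=2p+2$ is complete and arguably cleaner than the paper's treatment of that case. For $n\ge 3p+1$, the argument you actually bring to completion is the parenthetical one, not your ``primary plan'': Alexander duality identifies $\Dt{2}{\c}^{\vee}$ with $\Cl(\c)$ (the Vietoris--Rips complex of $\mathbb{Z}_n$ at scale $p$); your spread estimate correctly shows its facets are the $(p+1)$-arcs exactly when $n\ge 3p+1$; your arc-endpoint matching is acyclic (along any alternating path that leaves a matched pair, the deleted element must be an endpoint of the minimal enclosing arc, so the arc length strictly decreases) and its critical cells form the subcomplex $C_n$, so $\Cl(\c)$ collapses onto $\mathbb{S}^1$ and $\Dt{2}{\c}$ has the homology of $\mathbb{S}^{n-4}$; the full-$2$-skeleton observation plus Hurewicz--Whitehead then upgrades homology to homotopy type, with only $p=1$, $n\in\{4,5\}$ needing a hand check. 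This trades the paper's self-contained combinatorial bookkeeping for duality and soft topology; what the paper's approach buys in exchange is an explicit critical cell and a template that transfers verbatim to $K_m\square P_n$ and $K_m\square C_n$, where no such duality shortcut is available.

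Two caveats. First, the route you present as the heart of the matter --- the matching built directly on $\Dt{2}{\c}$ --- is left unexecuted, and its stated outcome is slightly off: if such a matching left both a critical $0$-cell and a critical $(n-4)$-cell (with $\emptyset$ matched), the Morse theorem would yield a CW model with two $0$-cells and one $(n-4)$-cell, which is disconnected for $n\ge 6$; in the paper's matching the single critical face is the $(n-4)$-cell and the extra basepoint $0$-cell is the one supplied by the Morse theorem itself. Second, your worry that ``a naive iteration of element matchings need not be acyclic'' is unfounded here: Lemma 4.1 of Jonsson (Lemma \ref{lemma:Jonsson acyclic} in the paper) guarantees acyclicity of any such iterated element matching, so no Cluster Lemma or auxiliary rank function is needed --- the only real work in the paper's approach is identifying the critical cells, exactly as you anticipated.
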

	
	For $2p+3 \leq n\le3p$, using SageMath, we computed the homology groups of the complex $\Dt{2}{\c}$ for certain values of $n$ and $p$. Based on our calculation, we conjecture about the homotopy type of $\Dt{2}{\c}$ for all $2p+3 \leq n\le3p $ (see Conjecture \ref{conjecture:2-cut power cycle} of Section \ref{section:future_direction})
	
	The \textit{Cartesian product} of graphs $G$ and $H$, denoted by $G \Box H$, is the graph where $V(G \Box H) = V(G) \times V(H)$ and $\{(u_1,v_1), (u_2,v_2)\} \in E(G \Box H)$ if and only if either $u_1=u_2$ and $\{v_1,v_2\} \in E(H)$, or $v_1=v_2$ and $\{u_1,u_2\}\in E(G)$.
	
	For a positive integer $n$, let $K_n$ and $P_n$ denote the complete graph and the path graph on $n$ vertices, respectively. In \cite{Bayer2024TotalCutcomplex}, it is proved that $\Dt{2}{K_m \square P_2}$ (Theorem 4.8) and $\Dt{2}{K_2 \square P_n}$ (Theorem 4.16) are homotopy equivalent to wedges of spheres. 
	Further, in \cite[Theorem 3.2]{chandrakar2024topology}, the authors proved that for $2\le k\le n$, $\Dt{k}{K_2\square P_n}$ is homotopy equivalent to a wedge of spheres. In this article, we extend the results of \cite{Bayer2024TotalCutcomplex} to all $K_m$ and all $P_n$, and prove the following.
	
	\begin{theorem}[Theorem \ref{theorem:K_m[]P_n}]\label{theorem:Intro_Cartesian_path}
		For $m,n\geq 2,$ $\Dt{2}{K_m \square P_n} \simeq \bigvee_{(m-1)(n-1)} \mathbb{S}^{mn-4}.$
	\end{theorem}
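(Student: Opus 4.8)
The plan is to pass to the Alexander dual of the clique complex. Write $G = K_m \square P_n$ and $N = mn = |V(G)|$. Since $\Dt{2}{G} = \D{2}{G}$, a subset $\sigma \subseteq V(G)$ is a face of $\Dt{2}{G}$ precisely when $V(G)\setminus\sigma$ contains a non-edge of $G$, equivalently when $V(G)\setminus\sigma$ is not a clique of $G$. Thus $\Dt{2}{G} = \Cl(G)^{\vee}$, the combinatorial Alexander dual of the clique complex $\Cl(G)$ (the complex whose faces are the cliques of $G$), and combinatorial Alexander duality gives $\tilde H_i(\Dt{2}{G}) \cong \tilde H^{N-i-3}(\Cl(G))$ for every $i$. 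So it suffices to understand the clique complex of $K_m\square P_n$.

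The next step is to describe the maximal cliques of $K_m \square P_n$. Label the vertices $(i,j)$ with $i \in \{1,\dots,m\}$, $j\in\{1,\dots,n\}$. Inside a fixed column $j$ every subset is a clique (the column is a copy of $K_m$), whereas a clique meeting two distinct columns must be a single horizontal edge $\{(i,j),(i,j+1)\}$: the vertex $(i,j+1)$ is adjacent to $(i',j)$ only when $i = i'$, and no vertex is adjacent to anything lying two or more columns away. Hence the maximal cliques are exactly the $n$ pairwise vertex-disjoint columns (each an $(m-1)$-simplex) and the $m(n-1)$ horizontal edges. Therefore $\Cl(K_m\square P_n)$ is the union of $n$ disjoint $(m-1)$-simplices in which the $i$-th vertex of column $j$ is joined by an edge to the $i$-th vertex of column $j+1$; collapsing each column (a contractible subcomplex) to a point is a homotopy equivalence and produces the connected graph on $n$ vertices with $m$ parallel edges between consecutive vertices. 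That graph has first Betti number $m(n-1) - n + 1 = (m-1)(n-1)$, so $\Cl(K_m \square P_n) \simeq \bigvee_{(m-1)(n-1)} \mathbb{S}^1$. Feeding this into Alexander duality yields $\tilde H_i(\Dt{2}{K_m\square P_n}) \cong \bb{Z}^{(m-1)(n-1)}$ for $i = N - 4 = mn - 4$ and $\tilde H_i = 0$ otherwise.

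It remains to upgrade this to a homotopy equivalence. For $n \ge 3$ one has $m(n-1) \ge 4$, so no subset of $V(G)$ of size $mn-3$ is contained in a column or in a horizontal edge; hence every $3$-subset of $V(G)$ is a non-clique, $\Dt{2}{G}$ contains the full $2$-skeleton of the $(N-1)$-simplex, and in particular $\Dt{2}{G}$ is simply connected. Since its reduced homology is free and concentrated in the single degree $mn-4 \ge 2$, the Hurewicz and Whitehead theorems give $\Dt{2}{G} \simeq \bigvee_{(m-1)(n-1)} \mathbb{S}^{mn-4}$. The remaining case $n = 2$ is \cite[Theorem 4.8]{Bayer2024TotalCutcomplex}. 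Alternatively, and in the spirit of the discrete Morse theory used elsewhere in the paper, one can avoid the simple-connectivity step altogether by constructing an explicit acyclic matching on the face poset of $\Dt{2}{K_m\square P_n}$ — assembled from a sequence of element matchings and glued together via the Cluster/Patchwork lemma — whose critical cells are one $0$-cell together with exactly $(m-1)(n-1)$ cells of dimension $mn-4$; discrete Morse theory then delivers a CW model with a single vertex and $(m-1)(n-1)$ top cells, which is visibly the asserted wedge of spheres.

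Most of the above is routine; the only place where I would expect to work is the passage from homology to homotopy type. On the Alexander-dual side this is the simple-connectivity argument together with the low-dimensional base case $n=2$; on the discrete Morse side the real work is verifying acyclicity of the matching and bookkeeping the critical cells, since a single element matching along one vertex leaves behind a large family of critical cells — indexed by cliques of $G$ containing a non-neighbour of that vertex — which have to be cancelled in subsequent matching rounds.
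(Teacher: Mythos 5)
Your argument is correct, but it takes a genuinely different route from the paper's. The paper stays inside $\Dt{2}{K_m\square P_n}$ and runs the sequence of element matchings of Equation \eqref{equation:sequence_of_element_matchings} along the vertices $(0,0),\dots,(0,n-1)$, showing via Proposition \ref{proposition:Km_Pn not in C_n} that the critical cells are exactly the faces $\s$ with $\s^c=\{(0,0),(i,j-1),(i,j)\}$ for $i\in[m-1]$, $j\in[n-1]$ --- hence $(m-1)(n-1)$ cells, all of dimension $mn-4$ --- and concludes with Corollary \ref{corollary:critical}. You instead identify $\Dt{2}{K_m\square P_n}$ with the combinatorial Alexander dual of the clique complex, observe that $\Cl(K_m\square P_n)$ collapses to a connected graph of first Betti number $m(n-1)-n+1=(m-1)(n-1)$, and transport this back through Alexander duality; this is shorter and makes the count transparent (it is the cycle rank of a graph), but it only computes homology, so the homotopy type must be paid for separately. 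That payment is where the two approaches really differ: the paper's matching argument yields the wedge decomposition uniformly for all $m,n\ge 2$, whereas your Hurewicz/Whitehead step needs $m(n-1)\ge 4$ (so that the full $2$-skeleton is present and $mn-4\ge 2$), leaving the cases $(m,n)=(2,2)$ and $(3,2)$ to rest entirely on the cited Theorem 4.8 of Bayer et al.\ --- legitimate, but you should confirm that its sphere count agrees with $\bigvee_{m-1}\mathbb{S}^{2m-4}$. One wording slip: ``every $3$-subset of $V(G)$ is a non-clique'' is false for $m\ge 3$ (three vertices in a single column form a clique); what your preceding clause actually establishes, and what you need, is that the \emph{complement} of every $3$-subset fails to be a clique, so that every $3$-subset is a face of $\Dt{2}{K_m\square P_n}$ and the full $2$-skeleton is present. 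Your closing sketch of a discrete Morse alternative is essentially the paper's proof, but, as you acknowledge, there the real work --- verifying acyclicity and cancelling the non-top critical cells --- is left uncarried.
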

	
	We also investigate the total $2$-cut complexes of $K_m \square C_n$ and  determine their homotopy types. 
	
	\begin{theorem} [Theorem \ref{theorem:K_m[]C_n}]\label{theorem:Intro_Cartesian_cycle}
		For $m \geq 2$ and $ n\geq 4,$ $\Dt{2}{K_m \square C_n} \simeq \bigvee_{n(m-1)+1} \mathbb{S}^{mn-4}.$
	\end{theorem}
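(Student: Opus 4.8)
The plan is to deduce this from the homotopy equivalence $\Dt{2}{K_m\square P_n}\simeq\bigvee_{(m-1)(n-1)}\mathbb{S}^{mn-4}$ (Theorem~\ref{theorem:Intro_Cartesian_path}) by a cell-attachment argument. Throughout write $G=K_m\square C_n$, $N=mn$, and identify $V(G)$ with $[m]\times\mathbb{Z}_n$. The starting point is the elementary reformulation: $\sigma\in\Dt{2}{H}$ if and only if $V(H)\setminus\sigma$ is not a clique of $H$ (equivalently, $\Dt{2}{H}$ is the combinatorial Alexander dual of the clique complex of $H$). For $H=G$ the maximal cliques are exactly the $n$ columns $[m]\times\{j\}$ and the $N$ horizontal edges $\bigl\{(i,j),(i,j+1)\bigr\}$; in particular the maximum clique size is $m$, so every subset of $V(G)$ of size $\ge m+1$ is a non-clique. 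Hence $\Dt{2}{G}$ contains the full $(N-m-2)$-skeleton of the simplex on $V(G)$ and is therefore $(N-m-3)$-connected, which for $m\ge 2$, $n\ge 4$ is at least $3$-connected --- in particular simply connected.

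Next I would compare $G$ with $K_m\square P_n$, which is $G$ with the $m$ edges $e_i:=\bigl\{(i,n-1),(i,0)\bigr\}$, $i\in[m]$, removed. Deleting edges only turns cliques into non-cliques, so $\Dt{2}{G}\subseteq\Dt{2}{K_m\square P_n}$; and a face $\tau$ of the larger complex fails to lie in the smaller one exactly when $V(G)\setminus\tau$ is a clique of $G$ containing some $e_i$. Since each $e_i$ is a \emph{maximal} clique of $G$ (it is a horizontal edge), this forces $V(G)\setminus\tau=e_i$, i.e.\ $\tau=\tau_i:=V(G)\setminus e_i$. Each $\tau_i$ is a facet of dimension $N-3$ with $\partial\tau_i\subseteq\Dt{2}{G}$ and with $\tau_i\cap\tau_j\subseteq\partial\tau_i$ for $i\ne j$. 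Thus $Y:=\Dt{2}{K_m\square P_n}$ is obtained from $X:=\Dt{2}{G}$ by attaching $m$ cells of dimension $N-3$ along subcomplex inclusions $\mathbb{S}^{N-4}\hookrightarrow X$, so that $Y/X\simeq\bigvee_{i=1}^m\mathbb{S}^{N-3}$.

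Then I would run the long exact homology sequence of the pair $(Y,X)$, feeding in $Y\simeq\bigvee_{(m-1)(n-1)}\mathbb{S}^{N-4}$ (with $N-4\ge 4$ since $m\ge2$, $n\ge4$) and $\widetilde H_k(Y/X)=\mathbb{Z}^m$ for $k=N-3$ and $0$ otherwise. For $k\notin\{N-4,N-3\}$ this gives $\widetilde H_k(X)\cong\widetilde H_k(Y)=0$; near the top it collapses to $0\to H_{N-3}(X)\to 0$ and to a short exact sequence $0\to\mathbb{Z}^m\to H_{N-4}(X)\to\mathbb{Z}^{(m-1)(n-1)}\to 0$, which splits because its cokernel is free. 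Hence $\widetilde H_*(X)$ is free, concentrated in degree $N-4$, of rank $m+(m-1)(n-1)=n(m-1)+1$.

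Finally, since $X$ is simply connected with $\widetilde H_k(X)=0$ for $k\le N-5$, the Hurewicz theorem gives that $X$ is $(N-5)$-connected and $\pi_{N-4}(X)\cong H_{N-4}(X)\cong\mathbb{Z}^{n(m-1)+1}$; a map $\bigvee_{n(m-1)+1}\mathbb{S}^{N-4}\to X$ representing a basis is then a homology isomorphism between simply connected CW complexes, hence a homotopy equivalence by Whitehead's theorem, giving $\Dt{2}{K_m\square C_n}\simeq\bigvee_{n(m-1)+1}\mathbb{S}^{mn-4}$. The step I expect to require the most care is the purely combinatorial bookkeeping of the second paragraph: verifying the clique classification of $K_m\square C_n$, that each $e_i$ is a maximal clique, and --- crucially --- that \emph{no faces other than the $\tau_i$} are lost when passing from $K_m\square P_n$ to $K_m\square C_n$, so that $Y$ really is $X$ with exactly those $m$ top cells attached; everything after that is formal. (If one wishes to remain within discrete Morse theory, as in the rest of the paper, one can instead transport the acyclic matching used for $K_m\square P_n$ to $\Dt{2}{K_m\square C_n}$, leaving a single critical vertex together with $n(m-1)+1$ critical $(mn-4)$-cells; this is combinatorially heavier but yields the homotopy equivalence directly.)
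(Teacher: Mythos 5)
Your argument is correct, but it takes a genuinely different route from the paper. The paper works directly on $\Dt{2}{K_m\square C_n}$ with discrete Morse theory: it runs the same sequence of element matchings on the vertices $(0,0),\dots,(0,n-1)$ that it used for $K_m\square P_n$, and shows (Proposition \ref{proposition:Km_Cn not in C_n} plus the proof of Theorem \ref{theorem:K_m[]C_n}) that the critical cells are exactly the $1+(m-1)+(m-1)(n-1)=n(m-1)+1$ faces whose complements are $\{(0)_0,(n-2)_0,(n-1)_0\}$, $\{(0)_0,(i,0),(i,n-1)\}$, or $\{(0)_0,(i,j-1),(i,j)\}$, all of dimension $mn-4$. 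You instead treat Theorem \ref{theorem:Intro_Cartesian_path} as a black box and observe that, since each wrap-around edge $e_i=\{(i,n-1),(i,0)\}$ is a maximal clique of $K_m\square C_n$ when $n\ge 4$, the complex $\Dt{2}{K_m\square P_n}$ is obtained from $\Dt{2}{K_m\square C_n}$ by attaching exactly the $m$ facets $V\setminus e_i$ along their boundary spheres; the long exact sequence of the pair then gives free homology concentrated in degree $mn-4$ of rank $m+(m-1)(n-1)=n(m-1)+1$, and the skeleton-containment argument (maximum clique size $m$, so the full $(mn-m-2)$-skeleton is present) supplies the simple connectivity needed for Hurewicz and Whitehead. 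All the combinatorial checks in your second paragraph go through (the cliques of $K_m\square C_n$ are precisely the subsets of columns and the horizontal edges, and for $n\ge4$ each $e_i$ has no common neighbour of its endpoints), and the hypothesis $n\ge4$ is used exactly where it should be. What your approach buys is brevity and a conceptual explanation of the ``$+\,m$'' discrepancy between the two answers; what it costs is self-containedness, since it leans on Theorem \ref{theorem:Intro_Cartesian_path} and on Hurewicz/Whitehead rather than producing an explicit acyclic matching whose critical cells exhibit the generating spheres, which is what the paper's method delivers and what it reuses for the conjectures on $3$-cut complexes.
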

	
	Beyond the total $2$-cut complexes, we investigate the total $3$-cut and $3$-cut complexes of $K_m \square P_n$ and $K_m \square C_n$. We computed their homology groups for small values of $m$ and $n$ using SageMath. These computations lead us to propose conjectures about their homotopy types (see Conjectures \ref{conjecture:total 3-cut K_m box P_n} to \ref{conjecture 3-cut K_m box C_n}).
	
	This paper is organized as follows: In Section \ref{section:preliminaries}, we recall the necessary preliminaries related to graph theory, simplicial complexes and discrete Morse theory. 
	Section \ref{section:basic_results} is divided into three subsections (Sections \ref{subsection:powers_of_cycle_graphs} to \ref{subsection:Km_Cn}) where we prove  Theorems \ref{theorem:Intro_powers_of_cycle} to \ref{theorem:Intro_Cartesian_cycle}, respectively. 
	Finally, Section \ref{section:future_direction} proposes several conjectures and questions based on our SageMath computations.  
	
	\section{Preliminaries}\label{section:preliminaries}
	This section presents some basic definitions and results used in this article.
	
	\subsection{Graph}
	A \textit{graph} $G$ is a pair $(V(G), E(G))$, where $V(G)$ is its vertex set and $E(G) \subseteq \binom{V(G)}{2}$ is the edge set. For any $u,v\in V(G)$, we say that $u$ and $v$ are adjacent if $\{u,v\} \in E(G)$. 
	We write $u\sim v$ for adjacency and $u\nsim v$ for non-adjacency. A {\it subgraph} $H$ of $G$ is a graph with $V(H) \subseteq V(G)$ and $E(H) \subseteq E(G)$.
	For a subset $U \subseteq V(G)$, the \textit{induced subgraph} $G[U]$ is the subgraph with $V(G[U]) = U$ and $E(G[U]) = \{\{a, b\} \in E(G) \ | \ a, b \in U\}$.
	
	For $u,v\in V(G)$, a {\it path} from $u$ to $v$ is a sequence of distinct vertices $u=v_0,v_1, \ldots, v_n=v$ such that $v_i \sim v_{i+1}$ for all $0 \leq i\leq n-1$. The {\it length} of a path is the number of edges in the path. 
	A graph is \textit{connected} if there exists a path between each pair of its vertices; otherwise, it is \textit{disconnected}.
	
	The \textit{complete graph} $K_n$ on $n$ vertices is the graph in which any two distinct vertices are adjacent. For $n\ge 1$, the \textit{path graph} $P_n$ is the graph with $V (P_n) = \{0,1,\ldots , n-1\}$ and $E(P_n) = \{\{i, i+1\}\,|\,0\le i \le n-2\}$. 
	For $n\ge 3$, the \textit{cycle graph} $C_n$ is the graph with $V (C_n) = \{0,1,\ldots , n-1\}$ and $E(C_n) = \{\{i, i+1\}\,|\,0\le i \le n-2\} \cup\{\{0, n-1\}\}$.
	
	We refer the reader to \cite{bondy1976graph} and \cite{west} for more details about the graph terminologies used in this article.
	
	\subsection{Simplicial complex}
	A {\it finite abstract simplicial complex} $\Delta$ is a collection of finite sets such that if $\t \in\Delta$ and $\s \subset \t$, then $\s \in\Delta$. 
	The elements of $\Delta$ are called {\it simplices} of $\Delta$. If $\s \subset \t$, we say that $\s$ is a {\it face} of $\t$. The \textit{dimension of a simplex} $\s$ is equal to $|\s| - 1$. The \textit{dimension of an abstract simplicial complex} is the maximum of the dimensions of its simplices. 
	If a simplex has dimension $d$, it is said to be $d$-{\it dimensional}. The $0$-dimensional simplices are called \textit{vertices} of $\Delta$. An abstract simplicial complex which is an empty collection of sets is called the \textit{void} abstract simplicial complex, and is denoted by $\emptyset$.
	A simplex that is not a face of any other simplex is called a {\it maximal simplex} or \textit{facet}. A \textit{subcomplex} $\Delta'$ of $\Delta$ is a simplicial complex such that $\s\in\Delta'$ implies $\s\in\Delta$.
	
	In this article, we consider any simplicial complex as a topological space, namely, its geometric realization (see \cite{Kozlov2008} for details). For terminologies related to algebraic topology, we refer to \cite{hatcher2005algebraic}.
	
	\subsection{Discrete Morse theory}
	Discrete Morse theory was introduced by Robin Forman \cite{forman1998} in 1998 as a combinatorial adaptation of classical Morse theory. For more details, we refer the reader to \cite[Chapter 4]{JonssonBook} and \cite[Chapter 11]{Kozlov2008}, which serve as the primary sources for the definitions and results in this section.
	
	Throughout this section, let $X$ be a set and let $\Sigma$ be a finite family of finite subsets of $X$.
	\begin{definition}
		A \textit{matching} on $\Sigma$ is a family $\M$ of pairs $\{\s,\t\}$ with $\s,\t\in\Sigma$ such that no set is contained in more than one pair in $\M$.
	\end{definition}  
	A set $\s\in\Sigma$ is said to be \textit{matched} in $\M$ if it is contained in some pair of $\M$; for convenience, we write $\s\in\M$. Otherwise, $\s$ is said to be \textit{critical} or \textit{unmatched} with respect to $\M$, and we write $\s\notin\M$. 
	
	\begin{definition}
		A matching $\M$ is said to be \textit{partial} if $\{\s,\t\}\in\M$ implies either $\s\prec\t$ (\textit{i.e.}, $\s\subset\t$ and no $\rho\in\Sigma$ satisfies $ \s\subset\rho\subset\t$), or $\s\succ \t$.
	\end{definition}
	Note that $\M$ is a partial matching on $\Sigma$ if and only if there exists $\Sigma' \subset\Sigma$ and an injective map $\mu: \Sigma'\rightarrow \Sigma\setminus \Sigma'$ such that $\mu(\s)\succ \s$ for all $\s\in\Sigma'$. 
	
	\begin{definition}
		A partial matching $\M$ on $\Sigma$ is called {\it acyclic} if there does not exist a cycle
		$$\mu(\s_1) \succ \s_1 \prec \mu(\s_2) \succ \s_2 \prec \mu(\s_3) \succ \s_3 \prec \dots \prec \mu(\s_t) \succ \s_t \prec \mu(\s_1),\ t\ge 2.$$
	\end{definition}
	
	\begin{definition}
		Let $x\in X$. A matching $\M_x$ is called an \textit{element matching} on $\Sigma$ using $x$ if every pair in $\M_x$ is of the form $\{\s\setminus\{x\},\s\cup\{x\}\}$ for some $\s\in\Sigma$.
	\end{definition}  
	It is easy to observe that any element matching is a partial matching. 
	
	\begin{lemma}[{\cite[Lemma 4.1]{JonssonBook}}]\label{lemma:Jonsson acyclic}
		Let $x\in X$. Define $$\M_x:= \{\{\s\setminus\{x\},\s\cup\{x\}\}\, |\, \s\setminus\{x\},\s\cup\{x\}\in\Sigma\}.$$ 
		Let $\M'$ be an acyclic matching on $\Sigma':= \{\s\in\Sigma\, |\, \s\notin\M_x\}$. Then $\M:= \M_x \cup\M'$ is an acyclic matching on $\Sigma$.
	\end{lemma}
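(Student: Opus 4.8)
The plan is to verify, in this order, that $\M:=\M_x\cup\M'$ is a matching, that it is partial, and --- the substantive point --- that it is acyclic. The first two are quick. Since $\Sigma'=\{\s\in\Sigma\,|\,\s\notin\M_x\}$, the sets occurring in the pairs of $\M_x$ are exactly the members of $\Sigma\setminus\Sigma'$, while every set occurring in a pair of $\M'$ lies in $\Sigma'$, so $\M_x$ and $\M'$ act on disjoint families of sets; moreover, within $\M_x$ each set $\s$ has at most one possible partner (namely $\s\cup\{x\}$ if $x\notin\s$, and $\s\setminus\{x\}$ if $x\in\s$), so $\M_x$ is itself a matching, and hence so is $\M$. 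Each pair of $\M_x$ has the form $\{\s\setminus\{x\},\s\cup\{x\}\}$, whose two members differ by the single element $x$, while the pairs of $\M'$ are covering pairs to begin with; hence $\M$ is partial.

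For acyclicity, suppose for contradiction that $\M$ admits a cycle
$$\mu(\s_1)\succ\s_1\prec\mu(\s_2)\succ\s_2\prec\cdots\prec\mu(\s_t)\succ\s_t\prec\mu(\s_1),\qquad t\ge 2,$$
with $\s_1,\dots,\s_t$ pairwise distinct, and read it as a closed walk alternating between a down step $\mu(\s_i)\to\s_i$ across the $i$-th matched pair and an up step $\s_i\to\mu(\s_{i+1})$ across an unmatched covering pair (indices modulo $t$). Put $\chi(S)=1$ if $x\in S$ and $\chi(S)=0$ otherwise. I would record three facts. (a) Along an up step, $\chi(\s_i)=\chi(\mu(\s_{i+1}))$: these two sets differ by one element, and if that element were $x$ then $\mu(\s_{i+1})=\s_i\cup\{x\}\in\Sigma$ would place $\{\s_i,\mu(\s_{i+1})\}$ in $\M_x\subseteq\M$; but $\mu(\s_{i+1})$ already lies in the matched pair $\{\s_{i+1},\mu(\s_{i+1})\}$, so the matching property forces $\s_i=\s_{i+1}$, contradicting distinctness. (b) Along a down step across an $\M'$-pair, $\chi(\s_i)=\chi(\mu(\s_i))$: otherwise $\s_i\cup\{x\}=\mu(\s_i)\in\Sigma$ would exhibit $\s_i$ as an $\M_x$-matched set, contradicting $\s_i\in\Sigma'$. (c) Along a down step across an $\M_x$-pair --- where necessarily $x\notin\s_i$ and $\mu(\s_i)=\s_i\cup\{x\}$ --- the value of $\chi$ drops from $1$ to $0$.

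By (a)--(c), $\chi$ never increases as one traverses the closed walk, so, since it must return to its starting value, it is constant; hence the walk uses no $\M_x$-pair. Therefore every matched pair of the cycle lies in $\M'$, so all the $\s_i$ and $\mu(\s_i)$ lie in $\Sigma'$, and every up step --- a covering relation of $\Sigma$ between two members of $\Sigma'$ --- is a covering relation of $\Sigma'$ as well; thus the very same chain is a cycle for $\M'$ on $\Sigma'$, contradicting the acyclicity of $\M'$. The step I expect to demand the most care is exactly this acyclicity argument: one must be scrupulous about which edges of the cycle are matched and which are not (so that fact (a) genuinely applies at every up step), and one must confirm that $\chi$ changes only across $\M_x$-pairs and only in the decreasing direction, so that its monotonicity along the walk truly excludes $\M_x$-pairs from the cycle. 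Here and throughout I rely on the standard convention for this framework that matched pairs, and the covering relations appearing in a cycle, consist of sets differing by exactly one element --- this is what makes facts (a)--(c) clean.
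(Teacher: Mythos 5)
The paper does not prove this lemma --- it is imported verbatim from Jonsson's book with only a citation --- so there is no internal proof to compare yours against; I can only assess your argument on its own terms, and it is correct. It is also the standard proof of this ``element matching'' (cluster) lemma: the matching and partiality checks are immediate from the disjointness of $\Sigma\setminus\Sigma'$ (the sets touched by $\M_x$) and $\Sigma'$ (the sets touched by $\M'$), and the acyclicity argument --- tracking the indicator $\chi$ of membership of $x$ around an alternating cycle, showing via your facts (a)--(c) that $\chi$ is non-increasing, hence constant, hence the cycle uses no $\M_x$-pair and therefore lives entirely in $\Sigma'$, where it contradicts the acyclicity of $\M'$ --- is exactly the classical argument, and your use of the distinctness of consecutive $\s_i$ to rule out an up-step being a matched $\M_x$-edge is the right way to make (a) airtight. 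The one genuine subtlety is the one you flag at the end: the paper defines $\prec$ as the covering relation of the ambient family, and for an arbitrary family of finite sets a covering pair need not differ by a single element (nor need a pair of $\M'$ that covers in $\Sigma'$ cover in $\Sigma$); your facts (a) and (b) use the one-element reading of $\prec$. That reading is the one under which the lemma is true, and it is the one in force in Jonsson's framework and in every application in this paper (all matchings used are element matchings, whose pairs differ by a single vertex), so stating it as a standing convention is acceptable --- just be aware that it is an assumption about the framework rather than a consequence of the paper's literal definition of $\prec$. Your final step, that an up-step which covers in $\Sigma$ between two members of $\Sigma'$ also covers in $\Sigma'$, is correctly justified by $\Sigma'\subseteq\Sigma$.
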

	
	Let $\{x_0,x_1,\ldots,x_{n-1}\}$ be a subset of $X$. We define a sequence of element matchings $\M_{x_0},\M_{x_1},\ldots,$ $\M_{x_{n-1}}$ on $\Sigma$ as follows. 
	
	Let $\C_0:=\Sigma$. For each $0\le i\le n-1$, define \begin{equation}\label{equation:sequence_of_element_matchings}
		\begin{aligned}
			\M_{x_i} &:= \{\{\di{\s}{x_i},\un{\s}{x_i}\}\, |\, \di{\s}{x_i},\un{\s}{x_i}\in\C_i\};\\
			\C_{i+1} &:= \{\s\in\C_i\, |\, \s\notin\M_{x_i}\}. 
		\end{aligned}           
	\end{equation}
	
	Observe that $\C_{i+1} = \{\s\in\Sigma\, |\, \s\notin\M_{x_j}\text{ for all }0\le j\le i\}$. Moreover, the matchings $\M_{x_0},\M_{x_1},\ldots,$ $\M_{x_{n-1}}$ are pairwise disjoint.
	
	\begin{remark}\label{remark:C_i}
		Let $0\le i\le n-1$. Then the following results hold: 
		\begin{enumerate}[label=(\roman*)]
			\item \label{C_i subseteq} $\C_{i+1}\subseteq\C_i\subseteq\C_{i-1}\subseteq\ldots\subseteq\C_1\subseteq\C_0=\Sigma$.
			\item \label{in C_i to C_i+1} For any $\s\in\C_i$, $\s\in\C_{i+1}$ \iff $\di{\s}{i}\notin\C_i$ or $\un{\s}{i}\notin\C_i$.
			\item \label{in C_j+1} Let $\s\in\C_i$ and let $i\le j\le n-1$. If $\di{\s}{r}\notin\C_i$ or $\un{\s}{r}\notin\C_i$ for all $i\le r\le j$, then $\s\in\C_{j+1}$.
		\end{enumerate}     
	\end{remark}
	
	To define a matching on a simplicial complex, we consider it simply as the set of its simplices (or faces). 
	
	\begin{theorem}[{\cite[Theorem 4.14]{JonssonBook}}]\label{theorem:acyclic}
		Let $\Delta$ be a simplicial complex and let $\M$ be an acyclic matching on $\Delta$ such that the empty set is not critical. Then $\Delta$ is homotopy equivalent to a cell complex with one cell of dimension $d \ge 0$ for each critical face of $\Delta$ of dimension $d$ plus one additional $0$-cell.
	\end{theorem}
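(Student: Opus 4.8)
The statement is the fundamental theorem of discrete Morse theory, and the plan is to prove it by building $\Delta$ up along a filtration dictated by $\M$, in which matched pairs are attached as \emph{free pairs} (leaving the homotopy type unchanged) while each critical face of dimension $d$ is attached as a single $d$-cell. Recall that if $\s \prec \t$ and $\t$ is the only face of the complex under consideration that properly contains $\s$, then $\s$ is a \emph{free face} of $\t$; adding or deleting such a pair is an elementary expansion, respectively collapse, and is a deformation retraction, hence a homotopy equivalence. The entire argument rests on scheduling the faces so that each matched pair, at the instant it is introduced, presents itself as a free pair, and this is exactly the point at which acyclicity of $\M$ is used.

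First I would reformulate the acyclicity hypothesis as a statement about a directed graph. Define the \emph{modified Hasse diagram} $\mathcal{H}$ on the faces of $\Delta$ by drawing an edge $\t \to \s$ whenever $\s \prec \t$ with $\{\s,\t\} \notin \M$, together with an edge $\s \to \mu(\s)$ for each matched pair $\{\s,\mu(\s)\} \in \M$, so that the only upward edges are the matched ones. A short check shows that a directed cycle in $\mathcal{H}$ is precisely an alternating cycle $\mu(\s_1) \succ \s_1 \prec \mu(\s_2) \succ \cdots \prec \mu(\s_1)$ of the type forbidden in the definition of an acyclic matching. Hence $\M$ is acyclic if and only if $\mathcal{H}$ is a directed acyclic graph; equivalently, $\M$ is induced by a discrete Morse function on $\Delta$. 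This order is what I would use to sequence the construction.

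Next I would run the filtration: introduce the faces of $\Delta$ one matched pair, or one critical face, at a time, in an order that presents every face after all of its proper faces, with the single exception that the members $\s$ and $\mu(\s)$ of a matched pair are introduced consecutively ($\s$ immediately before $\mu(\s)$). Acyclicity of $\M$ — equivalently, $\mathcal{H}$ being a directed acyclic graph — is precisely what guarantees that such an order exists. The key claim is then that when a matched pair $\{\s,\mu(\s)\}$ is introduced, $\s$ is a free face of $\mu(\s)$: every proper face of $\mu(\s)$ other than $\s$ has already been introduced, while any coface $\rho \succ \s$ with $\rho \ne \mu(\s)$ is introduced strictly later and so is not yet present. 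Hence each matched pair is attached as a free pair and contributes only a deformation retraction, whereas each critical face of dimension $d$ is attached along its already-present boundary and contributes one $d$-cell. Composing these steps exhibits $\Delta$ as homotopy equivalent to a CW complex with exactly one cell of dimension $d$ for each critical face of dimension $d$. In keeping with the present setup, one may alternatively organize the argument inductively through the element matchings of \eqref{equation:sequence_of_element_matchings} and Lemma \ref{lemma:Jonsson acyclic}, peeling off one $\M_{x_i}$ at a time and handling the single-element case directly.

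I expect the main obstacle to be the free-pair claim: verifying that acyclicity genuinely guarantees a valid ordering in which each introduced pair is free, so that the construction never stalls on a face with several surviving cofaces. This is the substantive content of Forman's theorem and is handled by a gradient-path analysis, tracing directed paths in $\mathcal{H}$ and using the absence of directed cycles to control them. It remains only to settle the bookkeeping for the empty face. The empty simplex is always present and, by hypothesis, is \emph{not} critical, so $\M$ matches it to some vertex; since one cannot expand a complex from nothing, a single $0$-cell must be present at the very start of the filtration to serve as a base point. This surviving point is the ``additional $0$-cell'' of the statement, and it also ensures that the resulting Morse complex is nonempty and based, which completes the identification of the homotopy type.
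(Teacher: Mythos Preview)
The paper does not prove this theorem; it is quoted verbatim from \cite[Theorem~4.14]{JonssonBook} and used as a black box. So there is no ``paper's own proof'' to compare against.

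Your sketch is a faithful outline of the standard Forman--Jonsson argument: recast acyclicity as acyclicity of the modified Hasse diagram, extract a linear extension that schedules matched pairs consecutively, and then observe that each matched pair is attached as a free pair (an elementary expansion) while each critical $d$-face is attached as a genuine $d$-cell. The treatment of the empty face and the extra $0$-cell is also the correct bookkeeping. Two minor points you should tighten if you ever write this out in full. First, the assertion that any directed cycle in $\mathcal{H}$ is \emph{precisely} an alternating cycle of the forbidden form needs the dimension-count argument: since every up-edge raises dimension by one and every down-edge lowers it by one, a closed walk must alternate, and hence restricts to a single dimension pair; this is short but not entirely trivial. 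Second, the existence of the ordering and the free-pair claim are really one step (a linear extension of the partial order induced by $\mathcal{H}$), and the ``gradient-path analysis'' you allude to is what certifies that when $\{\s,\mu(\s)\}$ is introduced no other coface of $\s$ has yet appeared; as written this is asserted rather than checked. Neither point is a gap in the strategy, but both are where the actual work lives.
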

	
	Theorem \ref{theorem:acyclic} implies the following result.
	
	\begin{corollary}[{\cite[Theorem 4.8]{JonssonBook}}] \label{corollary:critical}
		If an acyclic matching on a simplicial complex $\Delta$ has all its critical faces in the same dimension $d$, then $\Delta$ is homotopy equivalent to a wedge of spheres of dimension $d$.
	\end{corollary}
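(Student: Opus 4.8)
The plan is to deduce this directly from Theorem \ref{theorem:acyclic}; the corollary is essentially a repackaging of that theorem in the special case at hand, so no genuinely new idea is required. First I would check that the hypothesis of Theorem \ref{theorem:acyclic} is met: since every critical face of the given matching has dimension $d$ with $d \ge 0$, the empty set (which has dimension $-1$) is not critical. Applying Theorem \ref{theorem:acyclic}, $\Delta$ is homotopy equivalent to a CW complex $Y$ having exactly one cell of dimension $d$ for each critical face, together with one additional $0$-cell. Writing $N$ for the number of critical faces, $Y$ therefore has a single $0$-cell, exactly $N$ cells of dimension $d$, and no cells in any positive dimension other than $d$.

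The remaining step is to recognize such a $Y$ as a wedge of $d$-spheres. Suppose first that $d \ge 1$. Then for every $k$ with $0 \le k \le d-1$ the $k$-skeleton $Y^{(k)}$ consists of the single $0$-cell, hence is a point, so the attaching map $\mathbb{S}^{d-1} \to Y^{(d-1)}$ of each of the $N$ top-dimensional cells is the constant map. A $d$-cell attached to a point along the constant map contributes a copy of $\mathbb{S}^{d} \cong D^{d}/\partial D^{d}$ wedged on at that point, so $Y \simeq \bigvee_{N}\mathbb{S}^{d}$ (understood as a single point when $N = 0$). Combining this with the previous paragraph yields $\Delta \simeq \bigvee_{N}\mathbb{S}^{d}$, as claimed.

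Finally I would dispose of the degenerate case $d = 0$ by hand: in that case $Y$ is a discrete set of $N+1$ points, which is precisely the wedge of $N$ copies of $\mathbb{S}^{0}$, so the conclusion still holds. I do not anticipate any real obstacle here, since all the substantive content sits inside Theorem \ref{theorem:acyclic}; the only points that require a little care are keeping track of the extra $0$-cell and covering the boundary cases $N = 0$ and $d = 0$.
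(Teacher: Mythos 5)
Your proof is correct and follows the same route the paper intends: the paper presents this corollary as an immediate consequence of Theorem \ref{theorem:acyclic} (citing Jonsson) without written details, and your argument is exactly the fleshed-out version of that deduction, correctly noting that a CW complex with a single $0$-cell and all remaining cells in one dimension $d\ge 1$ forces constant attaching maps. Your handling of the empty-set hypothesis and the degenerate cases $N=0$ and $d=0$ is appropriate.
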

	
	\section{Proofs}\label{section:basic_results} 
	In this section, we determine the homotopy type of $\Dt{2}{\c}$ (Theorem \ref{theorem:Intro_powers_of_cycle}), $\Dt{2}{K_m\square P_n}$ (Theorem \ref{theorem:Intro_Cartesian_path}), and $\Dt{2}{K_m\square C_n}$ (Theorem \ref{theorem:Intro_Cartesian_cycle}).
	
	Let $G$ be a graph, and let $u,v\in V(G)$. Recall that the notation $u\sim v$ denotes that the vertices $u$ and $v$ are adjacent in $G$, and $u\nsim v$ denotes that $u$ and $v$ are not adjacent.
	Let $A \subseteq V(G)$. For $B \subseteq A$, we say that $B$ is a disconnected $2$-set in $A$ if $B = \{u, v\}$ for some  $u \nsim v$.
	Throughout this article, $A^c$ denotes the complement of $A$ in $V(G)$.
	
	We begin by establishing some basic results that will be used later. The following remark is a direct implication of the definition of the total $2$-cut complex.
	
	\begin{remark}\label{remark:face of 2-cut complex}
		Let $G$ be a graph, and let $\s \subseteq V(G)$. Then $\s\in\Dt{2}{G}$ if and only if there exist $u, v\in\s^c$ such that $u \nsim v$, {\it i.e.}, $\{u, v\}$ is a disconnected $2$-set in $\s^c$. Equivalently, $\s \notin\Dt{2}{G}$ if and only if the induced subgraph $G[\s^c]$ is a complete graph. 
	\end{remark}
	
	\begin{proposition} \label{proposition:first matching}
		Let $G$ be a graph and $u,v\in V(G)$ with $u\neq v$. Define 
		\begin{align*}
			\M_v &:= \{\{\s\setminus\{v\},\s\cup\{v\}\}\, |\, \di{\s}{v}, \s\cup\{v\}\in\Dt{2}{G}\};\\
			\C&:=\{\s\in\Dt{2}{G}\, |\, \s\notin\M_v\}.
		\end{align*} 
		Then, for any $\s\in\Dt{2}{G}$, the following statements hold:
		\begin{enumerate}[label = (\roman*)]
			\item \label{sigma (not) in C iff}
			$\s\in\C$ if and only if $\s\cup\{v\}\notin\Dt{2}{G}$. Equivalently, $\s\in\C$ if and only if every disconnected $2$-set in $\s^c$ contains $v$.
			
			\item \label{sigma union u in C} 
			If $\s\in\C$ and $\un{\s}{u}\in\Dt{2}{G}$, then $\un{\s}{u}\in\C$.
			
			\item \label{sigma minus u in C} 
			If $\s\in\C$ and $u\sim w$ for all $w\in(\un{\s}{v})^c$, then $\di{\s}{u}\in\C$. 
		\end{enumerate}
	\end{proposition}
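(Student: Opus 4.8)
The plan is to unwind the definitions using Remark \ref{remark:face of 2-cut complex} throughout, since membership in $\Dt{2}{G}$ is equivalent to the complement inducing a non-complete graph, i.e. containing a disconnected $2$-set. For part \ref{sigma (not) in C iff}, I would argue by contraposition. By construction, $\s\notin\C$ means $\s\in\M_v$, i.e. $\s$ is matched by the element matching $\M_v$; since $\M_v$ only pairs $\di{\s}{v}$ with $\un{\s}{v}$, being matched means both $\di{\s}{v}$ and $\un{\s}{v}$ lie in $\Dt{2}{G}$ --- in particular $\un{\s}{v}\in\Dt{2}{G}$ (note $\un{\s}{v}\supseteq\s$, so if $\s\in\Dt{2}{G}$ then the only issue is whether $\s$ together with $v$ is still a face). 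Conversely, if $\un{\s}{v}\in\Dt{2}{G}$, then also $\di{\s}{v}\in\Dt{2}{G}$ because it is a face of $\s\in\Dt{2}{G}$, so the pair $\{\di{\s}{v},\un{\s}{v}\}$ lies in $\M_v$ and $\s$ is matched. Hence $\s\in\C \iff \un{\s}{v}\notin\Dt{2}{G}$. For the ``equivalently'' clause: $(\un{\s}{v})^c = \di{\s^c}{v}$, so by Remark \ref{remark:face of 2-cut complex}, $\un{\s}{v}\notin\Dt{2}{G}$ iff $G[\di{\s^c}{v}]$ is complete iff every disconnected $2$-set in $\s^c$ must use the vertex $v$ (if one avoided $v$ it would survive in $\di{\s^c}{v}$, contradicting completeness; conversely if all of them use $v$, then $\di{\s^c}{v}$ is complete). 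I need to be slightly careful about the edge case where $v\notin\s^c$: then $\di{\s^c}{v}=\s^c$, and $\un{\s}{v}$ is not even a legitimate ``adding a new element'' operation — but in that case $v\in\s$, so $\un{\s}{v}=\s\in\Dt{2}{G}$, forcing $\s\notin\C$; correspondingly $\s^c$ contains no $2$-set through $v$ at all, so the ``every disconnected $2$-set contains $v$'' condition fails (as $\s\in\Dt{2}{G}$ guarantees at least one disconnected $2$-set exists), and both sides of the equivalence are consistent. I would state the running assumption $v\in\s^c$ or simply let the bookkeeping handle it.

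For part \ref{sigma union u in C}, suppose $\s\in\C$ and $\un{\s}{u}\in\Dt{2}{G}$; I want $\un{\s}{u}\in\C$. By \ref{sigma (not) in C iff} applied to $\un{\s}{u}$, it suffices to show $\un{\s}{u}\cup\{v\} = \un{(\un{\s}{v})}{u}\notin\Dt{2}{G}$, equivalently that $G[(\un{\s}{u,v})^c]$ is complete. Now $(\un{\s}{u,v})^c = \di{(\un{\s}{v})^c}{u} \subseteq (\un{\s}{v})^c$, and since $\s\in\C$ we know by \ref{sigma (not) in C iff} that $G[(\un{\s}{v})^c]$ is complete; an induced subgraph of a complete graph is complete, so $G[(\un{\s}{u,v})^c]$ is complete, giving $\un{\s}{u}\cup\{v\}\notin\Dt{2}{G}$ and hence $\un{\s}{u}\in\C$.

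For part \ref{sigma minus u in C}, suppose $\s\in\C$, $\di{\s}{u}\in\Dt{2}{G}$ (this holds whenever $u\in\s$, since then $\di{\s}{u}$ is a face of $\s$; the statement is vacuous if $u\notin\s$), and $u\sim w$ for every $w\in(\un{\s}{v})^c$. I want $\di{\s}{u}\in\C$, i.e. by \ref{sigma (not) in C iff}, $(\di{\s}{u})\cup\{v\} = \un{(\di{\s}{u})}{v}\notin\Dt{2}{G}$, i.e. $G[W]$ is complete where $W := (\un{(\di{\s}{u})}{v})^c = \un{(\un{\s}{v})^c}{u}$ (adding $u$ back to the complement). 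We already know $G[(\un{\s}{v})^c]$ is complete (since $\s\in\C$), and the hypothesis says $u$ is adjacent to every vertex of $(\un{\s}{v})^c$; therefore adjoining $u$ to this complete graph keeps it complete, so $G[W]$ is complete and $\di{\s}{u}\in\C$. The routine edge cases — whether $u$ or $v$ actually belongs to $\s$ or $\s^c$ — should be dispatched by observing that the set-operations $\cup$ and $\setminus$ are idempotent/absorbing, so the complement identities $(\un{\tau}{x})^c=\di{\tau^c}{x}$ and $(\di{\tau}{x})^c=\un{\tau^c}{x}$ hold on the nose. The main (minor) obstacle is precisely this bookkeeping: keeping straight which of $u,v$ lies in $\s$ versus $\s^c$ and making sure the ``adding/removing an element'' operations on $\s$ translate correctly to ``removing/adding'' on $\s^c$; there is no real topological or combinatorial difficulty once Remark \ref{remark:face of 2-cut complex} is invoked.
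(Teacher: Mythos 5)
Your proposal is correct and follows essentially the same route as the paper: part \ref{sigma (not) in C iff} by unwinding the definition of the element matching $\M_v$ and translating via Remark \ref{remark:face of 2-cut complex}, and parts \ref{sigma union u in C} and \ref{sigma minus u in C} by reducing to part \ref{sigma (not) in C iff} and checking completeness of the relevant complements (the paper phrases part \ref{sigma union u in C} via downward-closedness of the complex rather than completeness, but this is the same observation). The edge-case bookkeeping you flag ($u$ or $v$ lying in $\s$ versus $\s^c$) is handled in the paper exactly as you suggest and poses no difficulty.
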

	
	\begin{proof}
		\begin{enumerate}[label=(\roman*)]
			\item Since $\Dt{2}{G}$ is a simplicial complex, $\un{\s}{v}\in\Dt{2}{G}$ implies $\di{\s}{v}\in\Dt{2}{G}$. It follows that $\M_v=\{\{\s\setminus\{v\},\s\cup\{v\}\}\, |\, \s\cup\{v\}\in\Dt{2}{G}\}.$
			By the definition of $\C$, $\s\in\C$ if and only if $\s\notin\M_v$, which means that $\{\s\setminus\{v\},\un{\s}{v}\}\notin\M_v$.
			Since $\{\s\setminus\{v\},\un{\s}{v}\}\notin\M_v$ if and only if $\s\cup\{v\}\notin\Dt{2}{G}$, it follows that $\s\in\C$ if and only if $\s\cup\{v\}\notin\Dt{2}{G}$. 
			
			By Remark \ref{remark:face of 2-cut complex}, $\s\in\Dt{2}{G}$ and $\s\cup\{v\}\notin\Dt{2}{G}$ if and only if every disconnected $2$-set in $\s^c$ contains $v$. Therefore, $\s\in\C$ if and only if every disconnected $2$-set in $\s^c$ contains $v$.
			
			\item Let $\s\in\C$ and $\un{\s}{u}\in\Dt{2}{G}$. If $u\in\s$, then clearly $\un{\s}{u}=\s\in\C$. Hence, assume that $u\notin\s$. By \ref{sigma (not) in C iff}, $\s\in\C$ implies $\un{\s}{v}\notin\Dt{2}{G}$. Hence $\un{(\un{\s}{u})}{v}\notin\Dt{2}{G}$. Therefore, $\un{\s}{u}\in\C$ by \ref{sigma (not) in C iff}.
			
			\item Let $\s\in\C$. Then $\un{\s}{v}\notin\Dt{2}{G}$ by \ref{sigma (not) in C iff}. By Remark \ref{remark:face of 2-cut complex}, $u_1\sim u_2$ for all $u_1,u_2\in(\un{\s}{v})^c$. 
			Now, assume further that $u\sim w$ for all $w\in(\un{\s}{v})^c$. Then $v_1\sim v_2$ for all $v_1,v_2\in(\un{(\di{\s}{u})}{v})^c$. By Remark \ref{remark:face of 2-cut complex}, $\un{(\di{\s}{u})}{v}\notin\Dt{2}{G}$. Therefore, $\di{\s}{u}\in\C$ by \ref{sigma (not) in C iff}.
		\end{enumerate}
	\end{proof} 
	
	To prove Theorems \ref{theorem:Intro_powers_of_cycle} to \ref{theorem:Intro_Cartesian_cycle}, we construct acyclic matchings on the total $2$-cut complex $\Dt{2}{G}$ of the graph $G$ under consideration. 
	This is done by selecting an appropriate subset of vertices and defining a sequence of element matchings as in Equation \eqref{equation:sequence_of_element_matchings}. 
	Our aim is to characterize the critical faces with respect to the matching. Then, using Theorem \ref{theorem:acyclic} and Corollary \ref{corollary:critical}, we determine the homotopy type of $\Dt{2}{G}$.
	
	\subsection{Powers of cycle graphs}\label{subsection:powers_of_cycle_graphs}
	In this section, we prove Theorem \ref{theorem:Intro_powers_of_cycle}. We assume throughout that $p\ge 1$ is an integer. Recall that the $p$-th power of a cycle graph $C_n^p$ is the graph on the vertex set $V(\c)=\{0,1,\ldots,n-1\}$, where for any distinct vertices $u,v\in V(\c)$, we have $u\sim v$ in $\c$ if and only if $v \equiv u \pm t \ (\text{mod $n$})$ for some $1\le t\le p$. 
	
	Before proving Theorem \ref{theorem:Intro_powers_of_cycle}, we establish several key results. We begin by defining a sequence of element matchings $\M_0,\M_1,\ldots,\M_{n-1}$ on $\Dt{2}{\c}$ and the sets $\C_0,\C_1,\ldots,\C_{n}$, using vertices $0,1,\ldots,n-1$, as in Equation \eqref{equation:sequence_of_element_matchings}.
	By Lemma \ref{lemma:Jonsson acyclic}, $\M:=\bigsqcup_{i=0}^{n-1} \M_i$ is an acyclic matching on $\Dt{2}{\c}$. To proceed, we characterize the sets $\C_{i+1}$ for $0\le i\le n-1$, thus identifying the critical faces with respect to the matching $\M$. 
	
	Note that $\C_0=\Dt{2}{\c}$ and $\C_1=\{\s\in\Dt{2}{\c}\, |\, \s\notin\M_0\}$. To characterize $\C_1$, we apply Proposition \ref{proposition:first matching} with $v=0$. We characterize $\C_{i+1}$ inductively in Proposition \ref{proposition:C_i and C_i+1 for 1<=i<=p} for all $1\leq i\leq p$. 
	For each $1\leq i\leq p$, we define a subset $\X_i\subseteq V(\c)$ as follows: $$\X_i:=\{i+p+1,i+p+2,\ldots,i+n-p-1\}.$$ 
	Clearly, for $n\ge2p+2$ and for all $1\leq i\leq p$, we have $\X_i\neq\emptyset$. 
	
	\begin{proposition}\label{proposition:C_i and C_i+1 for 1<=i<=p}
		Let $n\ge 2p+2$ and $1\le i\le p$. If $\s\in\C_i$, then $\s\in\C_{i+1}$ if and only if $\{1,2,\ldots,i\}\subseteq\s$ and $\X_i\not\subseteq\s$.
	\end{proposition}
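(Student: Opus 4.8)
The plan is to prove the statement by induction on $i$, establishing along the way the explicit description
\[
\C_i=\bigl\{\,\s\subseteq V(\c)\ :\ 0\notin\s,\ \{1,\ldots,i-1\}\subseteq\s,\ \X_j\not\subseteq\s\ \text{for all }0\le j\le i-1,\ \c[\di{\s^c}{0}]\ \text{is complete}\,\bigr\}
\]
for each $1\le i\le p+1$, where we also set $\X_0:=\{p+1,\ldots,n-p-1\}$, so that the displayed formula for $\X_j$ holds at $j=0$ as well. Every $\s$ in the right-hand side automatically lies in $\Dt{2}{\c}$: since $\X_0\not\subseteq\s$ there is $w\in\s^c$ with $w\nsim 0$, and $0\in\s^c$, so $\{0,w\}$ is a disconnected $2$-set in $\s^c$. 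For $i=1$ the description follows from Proposition \ref{proposition:first matching}(i) applied with $v=0$: one has $\s\in\C_1$ exactly when $\un{\s}{0}\notin\Dt{2}{\c}$, i.e.\ when $\c[\di{\s^c}{0}]$ is complete, and this (together with $\s\in\Dt{2}{\c}$) forces $0\notin\s$ and $\X_0\not\subseteq\s$.

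For the inductive step, assume the description for $\C_i$ with $1\le i\le p$. The engine is Remark \ref{remark:C_i}(ii): for $\s\in\C_i$ one has $\s\in\C_{i+1}$ if and only if $\di{\s}{i}\notin\C_i$ or $\un{\s}{i}\notin\C_i$. I would split on whether $i\in\s$. If $i\in\s$, then $\un{\s}{i}=\s\in\C_i$, so it remains to decide whether $\di{\s}{i}\in\C_i$. Testing $\di{\s}{i}$ against the description: the conditions $0\notin\di{\s}{i}$ and $\{1,\ldots,i-1\}\subseteq\di{\s}{i}$ hold because $i\notin\{0,1,\ldots,i-1\}$, and $\X_j\not\subseteq\di{\s}{i}$ for all $0\le j\le i-1$ because $\di{\s}{i}\subseteq\s$; hence $\di{\s}{i}\in\C_i$ if and only if $\c[\,(\di{\s^c}{0})\cup\{i\}\,]$ is complete. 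Since $\c[\di{\s^c}{0}]$ is already complete and $i\ne 0$, this holds exactly when $i$ is adjacent to every vertex of $\di{\s^c}{0}$, i.e.\ when $\X_i\cap(\di{\s^c}{0})=\emptyset$; as $0\notin\X_i$ (valid because $1\le i\le p$), this amounts to $\X_i\subseteq\s$. Therefore $\s\in\C_{i+1}$ if and only if $\X_i\not\subseteq\s$, and together with $i\in\s$ and $\{1,\ldots,i-1\}\subseteq\s$ this reads exactly ``$\{1,\ldots,i\}\subseteq\s$ and $\X_i\not\subseteq\s$''.

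If instead $i\notin\s$, then $\di{\s}{i}=\s\in\C_i$, and I would check directly that $\un{\s}{i}\in\C_i$, which gives $\s\notin\C_{i+1}$, consistently with $\{1,\ldots,i\}\not\subseteq\s$. Indeed, against the description: $0\notin\un{\s}{i}$ and $\{1,\ldots,i-1\}\subseteq\un{\s}{i}$ are clear; $\c[\di{(\un{\s}{i})^c}{0}]$ is complete because $\di{(\un{\s}{i})^c}{0}\subseteq\di{\s^c}{0}$; and $\X_j\not\subseteq\un{\s}{i}$ for all $0\le j\le i-1$ because $i\notin\X_j$ (valid since $1\le i-j\le p$), so adjoining $i$ cannot complete $\X_j$. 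Combining the two cases proves the proposition for this $i$, and substituting the description of $\C_i$ into $\C_{i+1}=\{\s\in\C_i:\{1,\ldots,i\}\subseteq\s,\ \X_i\not\subseteq\s\}$ gives the description of $\C_{i+1}$, which closes the induction.

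The facts that drive the case analysis are the adjacency rule of $\c$ (namely $u\sim v$ if and only if $v\equiv u\pm t\pmod n$ for some $1\le t\le p$, so that the non-neighbours of $i$ are precisely the vertices of $\X_i$) together with $i\notin\X_j$ for $0\le j\le i-1$ and $0\notin\X_i$ for $1\le i\le p$; the latter two reduce to the observation that a residue in $\{1,\ldots,p\}$ never lies in $\{p+1,\ldots,n-p-1\}\pmod n$, and this is exactly where the hypothesis $n\ge 2p+2$ enters (it also guarantees $\X_i\neq\emptyset$, so that the statement has content). I expect the main obstacle to be identifying the correct inductive description of $\C_i$ in the first place; once it is in hand, the step is routine modular bookkeeping, the only mild subtlety being the verification, in the case $i\notin\s$, that adjoining the single vertex $i$ does not accidentally complete one of the sets $\X_j$. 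Note that no classification of which subsets of $V(\c)$ induce complete subgraphs is needed, so the argument runs uniformly for all $n\ge 2p+2$, the degenerate case $n=2p+2$ (where $\c$ is $K_n$ with a perfect matching deleted) included.
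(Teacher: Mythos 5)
Your proof is correct. It shares the paper's skeleton --- induction on $i$, with the inductive step reduced via Remark \ref{remark:C_i}\ref{in C_i to C_i+1} to deciding whether $\di{\s}{i}$ and $\un{\s}{i}$ lie in $\C_i$ --- but you strengthen the induction hypothesis to an explicit closed-form description of $\C_i$ (namely $0\notin\s$, $\{1,\ldots,i-1\}\subseteq\s$, $\X_j\not\subseteq\s$ for $0\le j\le i-1$, and $\c[\di{\s^c}{0}]$ complete), whereas the paper carries only the transition statement of the proposition itself. This is a genuine difference in execution: the paper must repeatedly climb from $\C_1$ back up to $\C_i$ by iterated applications of the weaker hypothesis, running each direction of the biconditional as a separate contradiction argument through Proposition \ref{proposition:first matching}, while your description lets you test $\di{\s}{i}$ and $\un{\s}{i}$ for membership in $\C_i$ by direct inspection, the only nontrivial inputs being that $\X_i$ is exactly the non-neighbourhood of $i$, that $0\notin\X_i$, and that $i\notin\X_j$ for $j<i$ (all of which you verify, and all of which are where $1\le i\le p$ and $n\ge 2p+2$ enter). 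The cost is having to confirm the description at every stage, including the closure step from $\C_i$ to $\C_{i+1}$, but those verifications are all present and correct in your write-up. As a bonus, your description of $\C_{p+1}$ specializes at $n=2p+2$ (where each $\X_j$ is a singleton) to give Lemma \ref{lemma:n=2p+2} almost immediately, whereas the paper derives that lemma by a further argument.
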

	
	\begin{proof}   
		The proof proceeds by induction on $i$ for $1\le i\le p$. We first prove the base case $i=1$.
		Let $\s\in\C_1$. It suffices to show that $\s\in\C_2$ if and only if $1\in\s$ and $\X_1\not\subseteq\s$. 
		\begin{itemize}
			\item Suppose $\s\in\C_2$. We first show that $1\in\s$. Assume, for contradiction, that $1\notin\s$. Since $\s\in\C_1\subseteq\Dt{2}{\c}$, there exists a disconnected $2$-set in $\s^c$ by Remark \ref{remark:face of 2-cut complex}. 
			Moreover, by Proposition \ref{proposition:first matching} \ref{sigma (not) in C iff}, every disconnected $2$-set in $\s^c$ contains $0$. Hence, there exists $v\in V(\c)$ such that $v\nsim0$ and $\{0,v\}\subseteq\s^c$. Since $0\sim 1$, we get $v\neq 1$. 
			Therefore, $\{0,v\}\subseteq(\un{\s}{1})^c$, and by Remark \ref{remark:face of 2-cut complex}, $\un{\s}{1}\in\Dt{2}{\c}$. By Proposition \ref{proposition:first matching} \ref{sigma union u in C}, $\un{\s}{1}\in\C_1$. 
			Further, since $\di{\s}{1}=\s\in\C_1$, it follows by Remark \ref{remark:C_i} \ref{in C_i to C_i+1} that $\s\notin\C_2$, a contradiction. Thus, $1\in\s$.    
			
			Now, we show that $\X_1\not\subseteq\s$. Suppose $\X_1\subseteq\s$. Then $\X_1\subseteq\un{\s}{0}$. By the definition of $\c$, $1\nsim u$ if and only if $u\in\{p+2,p+3,\ldots,n-p\}=\X_1$. Hence $1\sim v$ for all $v\in (\un{\s}{0})^c$.
			By Proposition \ref{proposition:first matching} \ref{sigma minus u in C}, $\di{\s}{1}\in\C_1$. Since $\un{\s}{1}=\s\in\C_1$, we get $\s\notin\C_2$ by Remark \ref{remark:C_i} \ref{in C_i to C_i+1}, a contradiction. Therefore, $\X_1\not\subseteq\s$. 
			
			\item Conversely, assume that  $1\in\s$ and $\X_1\not\subseteq\s$. 
			Since $1\nsim u$ if and only if $u\in\X_1$, there exists $v\in\s^c$ such that $1\nsim v$. Moreover, since $1\sim0$, we have $v\neq0$. It follows that $\{1,v\}$ is a disconnected $2$-set in $((\di{\s}{1})\cup\{0\})^c$. 
			By Remark \ref{remark:face of 2-cut complex}, $\un{(\di{\s}{1})}{0}\in\Dt{2}{\c}$, and hence $\di{\s}{1}\notin\C_1$ by Proposition \ref{proposition:first matching} \ref{sigma (not) in C iff}. Since $\s\in\C_1$, we conclude that $\s\in\C_2$ by Remark \ref{remark:C_i} \ref{in C_i to C_i+1}. 
		\end{itemize}
		Therefore, the result holds for $i=1$.
		
		Let $1<i\le p$ and assume the result holds for all $1\le i'<i$; that is, if $\s\in\C_{i'}$, then $\s\in\C_{i'+1}$ if and only if $\{1,2,\ldots,i'\}\subseteq\s$ and $\X_{i'}\not\subseteq\s$.
		We now prove the result for $i$. Let $\s\in\C_i$. We show that $\s\in\C_{i+1}$ if and only if $\{1,2,\ldots,i\}\subseteq\s$ and $\X_i\not\subseteq\s$. 
		\begin{itemize}
			\item Suppose $\s\in\C_{i+1}$. By Remark \ref{remark:C_i} \ref{C_i subseteq}, $\C_{i+1} \subseteq \C_i\subseteq\ldots\subseteq\C_1$. Hence $\s\in\C_{j}$ and $\s\in\C_{j+1}$ for all $1\le j<i$. By the induction hypothesis, for each such $j$, we have $\{1,2,\ldots,j\}\subseteq\s$ and $\X_j\not\subseteq\s$.
			
			We first prove that $\{1,2,\ldots,i\}\subseteq\s$. Since $\{1,2,\ldots,j\}\subseteq\s$ for all $1\le j<i$, we have $\{1,2,\ldots,i-1\}\subseteq\s$.
			
			Suppose $i\notin\s$. Since $\s\in\C_1$, every disconnected $2$-set in $\s^c$ contains $0$ by Proposition \ref{proposition:first matching} \ref{sigma (not) in C iff}. 
			Moreover, $1< i\le p$ implies that $0\sim i$. It follows that there exists $v\in\di{\s^c}{i}$ such that $v\nsim0$ and $\{0,v\}\subseteq(\un{\s}{i})^c$. Hence $\un{\s}{i}\in\Dt{2}{\c}$ by Remark \ref{remark:face of 2-cut complex}. By Proposition \ref{proposition:first matching} \ref{sigma union u in C}, $\s\in\C_1$ and $i\neq 0$ implies that $\un{\s}{i}\in\C_1$.
			
			Since $\{1,2,\ldots,j\}\subseteq\s$ for all $1\le j<i$, we get $\{1,2,\ldots,j\}\subseteq\un{\s}{i}$ for all $1\le j<i$. Note that $n\ge2p+2$ implies $p+1\le n-p-1$. Further, since $1<i\le p$, for any $1\le j<i$, we have $1<i\le p<j+p+1\le j+n-p-1<i+n-p-1\le n-1$. 
			This gives $i\notin\X_{j}$ for all $1\le j<i$. Therefore, $\X_{j}\not\subseteq\s$ for all $1\le j<i$ implies $\X_{j}\not\subseteq\s\cup\{i\}$ for all $1\le j<i$.
			
			We have $\un{\s}{i}\in\C_1$, and for each $1\le j<i$, $\{1,2,\ldots,j\}\subseteq\un{\s}{i}$ and $\X_{j}\not\subseteq\s\cup\{i\}$. 
			Since $\un{\s}{i}$ satisfies the conditions of the induction hypothesis for each $1\le j<i$, by iterative application of the induction hypothesis, we have $\un{\s}{i}\in\C_{j+1}$ for all $1\le j<i$. 
			In particular, for $j=i-1$, we get $\un{\s}{i}\in\C_i$. Then $\di{\s}{i}=\s\in\C_i$ implies that $\s\notin\C_{i+1}$ by Remark \ref{remark:C_i} \ref{in C_i to C_i+1}, a contradiction. Hence $i\in\s$.
			
			Since $\{1,2,\ldots,i-1\}\subseteq\s$, we conclude that $\{1,2,\ldots,i\}\subseteq\s$.
			
			We now show that $\X_i\not\subseteq\s$. Suppose $\X_i\subseteq\s$. Since $1< i\le p$, we have $0\notin\X_i$. Moreover, by the definition of $\c$, $i\nsim u$ if and only if $u\in\X_i$. 
			It follows that $\X_i\subseteq\un{\s}{0}$, and $i\sim v$ for all $v\in (\un{\s}{0})^c$. By Proposition \ref{proposition:first matching} \ref{sigma minus u in C}, we get $\di{\s}{i}\in\C_1$. 
			
			For each $1\le j<i$, we have $\{1,2,\ldots,j\}\subseteq\s$ and $\X_{j}\not\subseteq\s$. Clearly, $\{1,2,\ldots,j\}\subseteq\di{\s}{i}$ and $\X_{j}\not\subseteq\di{\s}{i}$ for each $1\le j<i$. 
			Since $\di{\s}{i}\in\C_1$, applying the induction hypothesis iteratively to $\di{\s}{i}$, we have $\di{\s}{i}\in\C_{j+1}$ for all $1\le j<i$. Taking $j=i-1$, we get $\di{\s}{i}\in\C_i$.
			Since $\un{\s}{i}=\s\in\C_i$, Remark \ref{remark:C_i} \ref{in C_i to C_i+1} implies $\s\notin\C_{i+1}$, a contradiction. Hence $\X_i\not\subseteq\s$.
			
			\item Conversely, assume $\{1,2,\ldots,i\}\subseteq\s$ and $\X_i\not\subseteq\s$. Since $1<i\le p$, we have $i\nsim u$ if and only if $u\in\X_i$, and $i\sim0$. 
			Hence there exists $v\in\di{\s^c}{0}$ such that $i\nsim v$, which implies that $\{i,v\}$ is a disconnected $2$-set in $((\di{\s}{i})\cup\{0\})^c$. So, $\un{(\di{\s}{i})}{0}\in\Dt{2}{\c}$. 
			Therefore, $\di{\s}{i}\notin\C_1$ by Proposition \ref{proposition:first matching} \ref{sigma (not) in C iff}. Since $\C_i\subseteq\C_1$ by Remark \ref{remark:C_i} \ref{C_i subseteq}, $\di{\s}{i}\notin\C_i$. By Remark \ref{remark:C_i} \ref{in C_i to C_i+1}, $\s\in\C_i$ implies $\s\in\C_{i+1}$.  
		\end{itemize}    
		This completes the proof.
	\end{proof} 
	
	Although Proposition \ref{proposition:C_i and C_i+1 for 1<=i<=p} holds for all $n\ge 2p+2$, we apply it here only to the case $n=2p+2$. For $n\ge 3p+1$, we employ a different approach.
	
	\begin{lemma}\label{lemma:n=2p+2} 
		Let $\t=\{1,2,\ldots,p\}$. For $n=2p+2$, $\C_{p+1}=\{\t\}$.
	\end{lemma}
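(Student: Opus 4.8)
The plan is to descend from $\C_1$ down to $\C_{p+1}$ by repeatedly applying Proposition \ref{proposition:C_i and C_i+1 for 1<=i<=p}, to pin down $\C_1$ using Proposition \ref{proposition:first matching} with $v=0$, and then to observe that for $n=2p+2$ the accumulated constraints leave exactly one admissible face. The structural fact driving everything is that $n=2p+2$ forces $n-p-1=p+1$, so in $\c$ each vertex $u$ has exactly one non-neighbour, namely $u+p+1$ read modulo $n$; that is, $\c$ is $K_{2p+2}$ with the antipodal perfect matching deleted, and the disconnected $2$-sets are precisely the antipodal pairs $\{j,j+p+1\}$ for $0\le j\le p$. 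In particular, for each $1\le i\le p$ the set $\X_i=\{i+p+1,\ldots,i+n-p-1\}$ collapses to the singleton $\{i+p+1\}$.

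First, iterating Proposition \ref{proposition:C_i and C_i+1 for 1<=i<=p} and using $\C_{p+1}\subseteq\C_p\subseteq\cdots\subseteq\C_1$ (Remark \ref{remark:C_i} \ref{C_i subseteq}): a set $\s$ lies in $\C_{p+1}$ if and only if $\s\in\C_1$ and, for every $1\le i\le p$, both $\{1,2,\ldots,i\}\subseteq\s$ and $\X_i\not\subseteq\s$. The cumulative containments are equivalent to $\{1,2,\ldots,p\}\subseteq\s$, and since each $\X_i$ is the singleton $\{i+p+1\}$, the remaining conditions say exactly that none of $p+2,p+3,\ldots,2p+1$ belongs to $\s$.

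Next, I would identify $\C_1$. Since $\M_0$ is the element matching on $\Dt{2}{\c}$ using the vertex $0$, Proposition \ref{proposition:first matching} \ref{sigma (not) in C iff} applies with $v=0$: $\s\in\C_1$ if and only if $\s\in\Dt{2}{\c}$ and every disconnected $2$-set in $\s^c$ contains $0$, and the only disconnected $2$-set containing $0$ is $\{0,p+1\}$. Feeding in the constraints from the previous step, namely $\{1,\ldots,p\}\subseteq\s$ and $\{p+2,\ldots,2p+1\}\cap\s=\emptyset$, I claim that $0\notin\s$ and $p+1\notin\s$. Indeed, if $0\in\s$ then $\s^c\subseteq\{p+1,p+2,\ldots,2p+1\}$; the antipodal partner of each vertex in this set lies in $\{0,1,\ldots,p\}$, so $\s^c$ contains no antipodal pair, hence $\c[\s^c]$ is complete and $\s\notin\Dt{2}{\c}$ by Remark \ref{remark:face of 2-cut complex}, contradicting $\s\in\C_1$. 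The case $p+1\in\s$ is symmetric, using $\s^c\subseteq\{0,p+2,p+3,\ldots,2p+1\}$. Thus every vertex of $\c$ is determined --- $1,\ldots,p$ lie in $\s$ and $0,p+1,p+2,\ldots,2p+1$ do not --- so $\s=\t$, proving $\C_{p+1}\subseteq\{\t\}$. For the reverse inclusion I would check $\t\in\C_{p+1}$ directly: $\t^c=\{0,p+1,p+2,\ldots,2p+1\}$ contains the antipodal pair $\{0,p+1\}$, so $\t\in\Dt{2}{\c}$; this is the only disconnected $2$-set in $\t^c$, so $\t\in\C_1$ by Proposition \ref{proposition:first matching} \ref{sigma (not) in C iff}; and then Proposition \ref{proposition:C_i and C_i+1 for 1<=i<=p} yields $\t\in\C_{i+1}$ for $i=1,\ldots,p$ since $\{1,\ldots,i\}\subseteq\t$ and $i+p+1\notin\t$. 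Hence $\C_{p+1}=\{\t\}$.

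The argument is largely bookkeeping; the one place where the tightness of $n=2p+2$ is essential is the observation that adjoining $0$ (or $p+1$) to $\s$ turns $\s^c$ into a clique of $\c$ --- equivalently, that $\{p+1,p+2,\ldots,2p+1\}$ and $\{0,p+2,p+3,\ldots,2p+1\}$ induce complete subgraphs of $C_{2p+2}^p$ --- which breaks down as soon as $n>2p+2$. So I expect the only (minor) obstacle to be keeping the modular arithmetic of the antipodal pairs straight.
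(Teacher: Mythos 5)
Your proof is correct and follows essentially the same route as the paper's: iterate Proposition \ref{proposition:C_i and C_i+1 for 1<=i<=p} to force $\{1,\ldots,p\}\subseteq\s$ and $\{p+2,\ldots,2p+1\}\subseteq\s^c$, then use Proposition \ref{proposition:first matching}~\ref{sigma (not) in C iff} to pin down $\{0,p+1\}\subseteq\s^c$, and verify $\t\in\C_{p+1}$ by the same two propositions. The only cosmetic difference is that you rule out $0\in\s$ and $p+1\in\s$ by showing $\s^c$ would then be a clique of $\c$, whereas the paper notes directly that $\s\in\Dt{2}{\c}$ guarantees some disconnected $2$-set in $\s^c$, which by $\s\in\C_1$ must be $\{0,p+1\}$.
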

	
	\begin{proof} 
		By the definition of $C_{2p+2}^p$, $\{0,p+1\}$ is the only disconnected $2$-set containing $0$ in $V(C_{2p+2}^p)$.
		
		First, assume that $\s\in\C_{p+1}$. We prove $\s=\t$. By Remark \ref{remark:C_i} \ref{C_i subseteq}, $\C_{p+1} \subseteq \C_p\subseteq\ldots\subseteq \C_1\subseteq\C_0=\Dt{2}{\c}$. Hence $\s\in\C_i$ and $\s\in\C_{i+1}$ for all $1\le i \le p$. 
		By Proposition \ref{proposition:C_i and C_i+1 for 1<=i<=p}, for each $1\leq i \leq p$, we obtain $\{1,2,\ldots,i\}\subseteq\s$ and $i+p+1\notin\s$. It follows that $\{1,2,\ldots,p\}\subseteq\s$ and $\{p+2,p+3,\ldots,2p+1\}\subseteq\s^c$. 
		Since $\s\in\C_1$, Proposition \ref{proposition:first matching} \ref{sigma (not) in C iff} implies that every disconnected $2$-set in $\s^c$ contains $0$. Hence $\{0,p+1\}\subseteq\s^c$. This means that $\s=\{1,2,\ldots,p\}$, and thus $\s=\t$.
		
		We now prove that $\t\in\C_{p+1}$. Observe that $\{0,p+1\}$ is the only disconnected $2$-set in $\t^c$. Hence $\t\in\Dt{2}{C_{2p+2}^p}$ and $\un{\t}{0}\notin\Dt{2}{C_{2p+2}^p}$. By Proposition \ref{proposition:first matching} \ref{sigma (not) in C iff}, $\t\in\C_1$. 
		Since $\{1,2,\ldots,p\}\subseteq\t$ and $\{p+2,p+3,\ldots,2p+1\}\subseteq\t^c$, it follows that $\{1,2,\ldots,i\}\subseteq\t$ and $i+p+1\notin\t$ for each $1\leq i\leq p$. Applying Proposition \ref{proposition:C_i and C_i+1 for 1<=i<=p} iteratively, we obtain $\t\in\C_{p+1}$. 
		
		Hence $\C_{p+1}=\{\t\}$.
	\end{proof}
	
	We have set up the necessary results for $n=2p+2$. Now, 
	we address the case $n\geq 3p+1$. 
	For $\s\in\Dt{2}{\c}$, define $$\m{\s}:=\max(\s^c).$$ 
	By Remark \ref{remark:face of 2-cut complex}, if $\s\in\Dt{2}{\c}$, then $\c[\s^c]$ is not a complete graph. From the definition of $\c$, we then obtain the following.
	
	\begin{remark}\label{remark:range of m_sigma}
		Let $\s\in\Dt{2}{\c}$. Then $\m{\s}\in\{p+1,p+2,\ldots,n-1\}$.
	\end{remark}
	
	For $\s \in\Dt{2}{\c}$, we have $\m{\s}-p\ge1$ by Remark \ref{remark:range of m_sigma}. From Remark \ref{remark:C_i} \ref{C_i subseteq}, $\C_{\m{\s}-p}\subseteq \C_1$. The proposition below establishes the reverse inclusion $\C_1\subseteq\C_{\m{\s}-p}$.
	
	\begin{proposition}\label{proposition:n>=3p+1 and sigma in C_m-p}
		Let $n\ge 3p+1$. If $\s\in\C_1$, then $\s^c\subseteq\{0,\m{\s}-p, \m{\s}-p+1,\ldots,\m{\s}\}$ and $\s\in\C_{\m{\s}-p}$. 
	\end{proposition}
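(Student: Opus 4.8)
The plan is to first translate the hypothesis using Proposition~\ref{proposition:first matching}\ref{sigma (not) in C iff} with $v=0$: since $\s\in\C_1\subseteq\Dt{2}{\c}$, the set $\s^c$ contains a disconnected $2$-set and every disconnected $2$-set in $\s^c$ contains $0$. Hence $0\in\s^c$, the set $T:=\s^c\setminus\{0\}$ induces a complete subgraph of $\c$, and there is some $w\in T$ with $w\nsim 0$; by the description of $\c$ this last condition means $p+1\le w\le n-p-1$. Since every element of $T$ lies in $\{1,\dots,n-1\}$, we also get $\m{\s}=\max(\s^c)=\max T$. With this translation the assertion splits into (a) $T\subseteq\{\m{\s}-p,\dots,\m{\s}\}$ and (b) $\s\in\C_{\m{\s}-p}$.

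For (a) I would argue by contradiction: suppose $t\in T$ with $t<\m{\s}-p$. The idea is a short chain of cyclic-distance inequalities in which the hypothesis $n\ge 3p+1$ forces every ``direct'' distance below to exceed $p$, so each adjacency has to be realized through the complementary arc, and the trivial bound $\m{\s}\le n-1$ finally closes the loop. Since $T$ induces a complete graph, $t\sim\m{\s}$; as $\m{\s}-t>p$ while the cyclic distance is at most $p$, we must have $n-\m{\s}+t\le p$, i.e.\ $t\le\m{\s}+p-n\le p-1$. Next $t\sim w$ (note $t\le p-1<p+1\le w$), and the alternative $n-w+t\le p$ is impossible since it would give $t\le w+p-n\le-1$; hence $w-t\le p$, i.e.\ $w\le t+p\le\m{\s}+2p-n$. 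Finally $w\le t+p<\m{\s}$, so $w\sim\m{\s}$; now $\m{\s}-w\ge n-2p>p$, which forces $n-\m{\s}+w\le p$, i.e.\ $w\le\m{\s}+p-n\le p-1$, contradicting $w\ge p+1$. I expect this to be the main obstacle of the proof, since it is exactly here that the complete-subgraph structure of $T$ has to be played off against $n\ge 3p+1$.

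For (b), granting (a), I would invoke Remark~\ref{remark:C_i}\ref{in C_j+1} with $i=1$ and $j=\m{\s}-p-1$ (if $\m{\s}=p+1$ there is nothing to prove, since then $\C_{\m{\s}-p}=\C_1\ni\s$). By (a), every $r$ with $1\le r\le\m{\s}-p-1$ satisfies $r\notin\s^c$, hence $\un{\s}{r}=\s\in\C_1$; so Remark~\ref{remark:C_i}\ref{in C_j+1} reduces the goal to showing $\di{\s}{r}\notin\C_1$ for each such $r$. Since $\di{\s}{r}\subseteq\s\in\Dt{2}{\c}$ we have $\di{\s}{r}\in\Dt{2}{\c}$, so by Proposition~\ref{proposition:first matching}\ref{sigma (not) in C iff} it suffices to exhibit a disconnected $2$-set in $(\di{\s}{r})^c=\s^c\cup\{r\}$ not containing $0$; as $T$ induces a complete graph, such a set must be $\{r,y\}$ with $y\in T$ and $y\nsim r$. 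If no such $y$ existed, then $r\sim\m{\s}$ and $r\sim w$, and the same computation as in (a) gives a contradiction: $r\sim\m{\s}$ with $\m{\s}-r>p$ forces $\m{\s}\ge n-p+r$, so by (a) $w\ge\m{\s}-p\ge n-2p+r$, whence $w-r\ge n-2p>p$, and then $r\sim w$ forces $w\ge n-p+r\ge n-p+1$, contradicting $w\le n-p-1$. Thus the desired $y$ exists, $\di{\s}{r}\notin\C_1$, and Remark~\ref{remark:C_i}\ref{in C_j+1} yields $\s\in\C_{\m{\s}-p}$.
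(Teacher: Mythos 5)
Your proof is correct, and its overall skeleton matches the paper's: translate $\s\in\C_1$ via Proposition \ref{proposition:first matching} \ref{sigma (not) in C iff} (so $0\in\s^c$, the set $T=\s^c\setminus\{0\}$ induces a complete subgraph, and some $w\in T$ satisfies $p+1\le w\le n-p-1$), then obtain $\s\in\C_{\m{\s}-p}$ from Remark \ref{remark:C_i} \ref{in C_j+1} by showing $\di{\s}{r}\notin\C_1$ for every $1\le r\le\m{\s}-p-1$. Where you genuinely diverge is in establishing $\s^c\subseteq\{0,\m{\s}-p,\ldots,\m{\s}\}$: the paper runs a three-way case analysis on whether $\m{\s}$ lies in $\{p+1,\ldots,n-p-1\}$, equals $n-p$, or lies in $\{n-p+1,\ldots,n-1\}$, explicitly listing the vertices forced into $\s$ in each case, whereas you give a single uniform contradiction via the adjacency chain $t\sim\m{\s}$, $t\sim w$, $w\sim\m{\s}$, letting $n\ge3p+1$ push each adjacency onto the wrong arc of the cycle. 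I checked the resulting inequalities ($t\le p-1$, then $w\le\m{\s}+2p-n$, then $w\le p-1$ against $w\ge p+1$), and the needed distinctness of $t,w,\m{\s}$ is justified where you use completeness of $T$; the argument is sound and noticeably shorter than the paper's, at the cost of not exhibiting explicitly which vertices witness the containment. Similarly, in the second part you prove non-constructively that some $y\in T$ with $y\nsim r$ exists, while the paper constructs $y$ as either $\m{\s}$ or $w$ depending on the position of $r$; both are valid, and your version again trades explicitness for brevity.
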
 
	
	\begin{proof}
		Suppose $\s\in\C_1$. We first show that $\s^c\subseteq\{0,\m{\s}-p, \m{\s}-p+1,\ldots,\m{\s}\}$. Since $\C_1\subseteq\Dt{2}{\c}$, we have $\s\in\Dt{2}{\c}$. Hence $\m{\s}\in\{p+1,p+2,\ldots,n-1\}$ by Remark \ref{remark:range of m_sigma}. Clearly, $\m{\s}\ne0$.
		By Proposition \ref{proposition:first matching} \ref{sigma (not) in C iff}, every disconnected $2$-set in $\s^c$ contains $0$ (as $\s\in\C_1$). Therefore, since $\m{\s}\in\s^c$, it follows that if there exists $v\in V(\c)$ such that $v\ne0$ and $v\nsim\m{\s}$, then $v\in\s$. We consider the following cases depending on the value of $\m{\s}$:
		\begin{itemize}
			\item Suppose $\m{\s}\in\{p+1,p+2,\ldots,n-p-1\}$. Then $1\le\m{\s}-p<\m{\s}<\m{\s}+p\le n-1$. Since $\max(\s^c)=\m{\s}$, it follows that $\{\m{\s}+1,\m{\s}+2,\ldots,\m{\s}+p\}\subseteq\s$.
			Let $u\in\{1,2,\ldots,\m{\s}-p-1\}\cup\{\m{\s}+p+1,\m{\s}+p+2,\ldots,n-1\}$. Then $u\ne0$ and $u\nsim\m{\s}$, which implies $u\in\s$. Hence $\s^c\subseteq\{0,\m{\s}-p,\m{\s}-p+1,\ldots,\m{\s}\}$.
			
			\item Suppose $\m{\s}=n-p$. Then $1<n-2p=\m{\s}-p<\m{\s}<\m{\s}+p-1=n-1$ and $\m{\s}+p\ (\text{mod $n$})=0$. It follows that $\{\m{\s}+1,\m{\s}+2,\ldots,n-1\}\subseteq\s$.
			Now, let $u\in\{1,2,\ldots,\m{\s}-p-1\}$. Observe that $u\ne0$ and $u\nsim\m{\s}$. Hence $u\in\s$, which implies $\{1,2,\ldots,\m{\s}-p-1\}\subseteq\s$. Therefore $\s^c\subseteq\{0,\m{\s}-p,\m{\s}-p+1,\ldots,\m{\s}\}$.
			
			\item Suppose $\m{\s}\in\{n-p+1,n-p+2,\ldots,n-1\}$. Since $\s\in\Dt{2}{\c}$ and every disconnected $2$-set in $\s^c$ contains $0$, there exists $w\in\s^c$ such that $w\nsim0$. By the definition of $\c$, $w\in\{p+1,p+2,\ldots,n-p-1\}$. 
			
			Observe that $1\le\m{\s}+p\ (\text{mod $n$})<p+1\le w\le n-p-1<\m{\s}\le n-1$. Since $n\ge3p+1$, we have $\m{\s}+p\ (\text{mod $n$})<\m{\s}-2p$. Moreover,  $p\ge1$ implies $\m{\s}-2p<\m{\s}-p<\m{\s}$.
			Therefore $\m{\s}+p\ (\text{mod $n$})<w,\m{\s}-p<\m{\s}$. If $w<\m{\s}-p$, then $w\nsim\m{\s}$, which implies $w\in\s$ (as $w\ne0$), a contradiction. So $w\ge\m{\s}-p$. 
			
			It follows that $1\le\m{\s}+p\ (\text{mod $n$})<\m{\s}-2p\le w-p<w<w+p\le n-1$. 
			Let $u\in\{1,2,\ldots,\m{\s}+p\ (\text{mod $n$})\}$. Then $u\ne0$ and $w\nsim u$, which implies $\{u,w\}$ is a disconnected $2$-set not containing $0$. Since $w\in\s^c$, we get $u\in\s$, and thus $\{1,2,\ldots,\m{\s}+p\ (\text{mod $n$})\}\subseteq\s$. 
			Now, let $u'\in\{\m{\s}+p\ (\text{mod $n$})+1,\m{\s}+p\ (\text{mod $n$})+2,\ldots,\m{\s}-p-1\}$. Then $u'\ne0$ and $u'\nsim\m{\s}$, which implies $u'\in\s$. Hence $\{\m{\s}+p\ (\text{mod $n$})+1,\m{\s}+p\ (\text{mod $n$})+2,\ldots,\m{\s}-p-1\}\subseteq\s$. Moreover, $\max(\s^c)=\m{\s}$ implies that $\{\m{\s}+1,\m{\s}+2,\ldots,n-1\}\subseteq\s$. This means that $\s^c\subseteq\{0,\m{\s}-p,\m{\s}-p+1,\ldots,\m{\s}\}$.
		\end{itemize}
		Therefore, $\s^c\subseteq\{0,\m{\s}-p, \m{\s}-p+1,\ldots,\m{\s}\}$.
		
		We now prove that $\s\in\C_{\m{\s}-p}$. If $\m{\s}=p+1$, then $\m{\s}-p=1$. Since $\s\in\C_1$, we are done.
		
		Now, suppose $\m{\s}\in\{p+2,p+3,\ldots,n-1\}$. Let $1\le i\le \m{\s}-p-1$. Since $\s^c\subseteq\{0,\m{\s}-p, \m{\s}-p+1,\ldots,\m{\s}\}$, $i\in\s$. We first show that $(\di{\s}{i})\cup\{0\}\in\Dt{2}{\c}$. 
		Note that $((\di{\s}{i})\cup\{0\})^c=\un{(\un{\s}{0})^c}{i}$. By Remark \ref{remark:face of 2-cut complex}, it suffices to find $v\in(\un{\s}{0})^c$ such that $i\nsim v$.
		\begin{itemize}
			\item Suppose $\m{\s}\in\{p+2,p+3,\ldots,n-p\}$. Then $2\leq \m{\s}-p<\m{\s}<\m{\s}+p\leq n$. Since $1\le i\le \m{\s}-p-1$, by the definition of $\c$, $i\nsim \m{\s}$. Therefore, $v=\m{\s}$ such that $i\nsim v$. 
			
			\item Suppose $\m{\s}\in\{n-p+1,n-p+2,\ldots,n-1\}$. Since $n\ge3p+1$ and $p\ge 1$, we have $\m{\s}+p\ (\text{mod $n$})<\m{\s}-2p<\m{\s}-p$. 
			So $1\le \m{\s}+p\ (\text{mod $n$})\le\m{\s}-p-1$. Since $1\le i\le \m{\s}-p-1$, we have $1\le i\leq \m{\s}+p\ (\text{mod $n$})$ or $\m{\s}+p\ (\text{mod $n$})<i\le \m{\s}-p-1$. 
			
			If $\m{\s}+p\ (\text{mod $n$})<i<\m{\s}-p$, then $i\nsim \m{\s}$. Therefore, we have $v=\m{\s}$ with $i\nsim v$. 
			
			Now, assume that $1\le i\leq \m{\s}+p\ (\text{mod $n$})$. 
			Since $\s\in\C_1$, it follows by Proposition \ref{proposition:first matching} \ref{sigma (not) in C iff} that every disconnected $2$-set in $\s^{c}$ contains $0$. Further, $\s \in\C_1\subseteq\Dt{2}{\c}$ implies that there exists $u\in\di{\s^c}{0}=(\un{\s}{0})^c$ such that $0\nsim u$.
			By definition of $\c$, we have $p+1\le u\le n-p-1$. Moreover, $\s^c\subseteq\{0,\m{\s}-p,\m{\s}-p+1,\ldots,\m{\s}\}$ implies that $\m{\s}-p\le u\le\m{\s}$. This means that $\m{\s}-p\leq u\leq n-p-1$, and thus $\m{\s}-2p\leq u-p<u<u+p\leq n-1$. 
			Since $1\leq \m{\s}+p\ (\text{mod $n$})<\m{\s}-2p$ and $1\leq i\leq \m{\s}+p\ (\text{mod $n$})$, we get $i\notin\{u-p,u-p+1,\ldots,u,\ldots,u+p\}$, and hence $i\nsim u$. Thus, we have $v=u$ with $i\nsim v$.    
		\end{itemize} 
		In all cases, $(\di{\s}{i})\cup\{0\}\in\Dt{2}{\c}$. Hence $\di{\s}{i}\notin\C_1$ for all $1\le i\le\m{\s}-p-1$ by Proposition \ref{proposition:first matching} \ref{sigma (not) in C iff}, and therefore $\s\in\C_{\m{\s}-p}$ by Remark \ref{remark:C_i} \ref{in C_j+1} (as $\s\in\C_1$).
		
		This completes the proof.
	\end{proof}
	
	Proposition \ref{proposition:n>=3p+1 and sigma in C_m-p} states that if $\s\in\C_1$, then $\s\in\C_{\m{\s}-p}$. The following result provides a sufficient condition for when $\s\notin\C_{\m{\s}-p+1}$.
	
	\begin{proposition}\label{proposition:matched in m-p}
		Let $n\ge 3p+1$ and let $\s\in\C_1$. Suppose that there exists a disconnected $2$-set $\{0,v\}$ in $\s^c$ such that $v\neq \m{\s}-p$. Then $\s\notin\C_{\m{\s}-p+1}$.  
	\end{proposition}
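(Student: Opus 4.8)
The plan is to exhibit $\s$ as a matched face of the element matching $\M_{\m{\s}-p}$, which by the construction of the sets $\C_i$ forces $\s\notin\C_{\m{\s}-p+1}$. Since $\s\in\C_1$ and $n\ge 3p+1$, Proposition \ref{proposition:n>=3p+1 and sigma in C_m-p} already gives $\s\in\C_{\m{\s}-p}$ and $\s^c\subseteq\{0,\m{\s}-p,\m{\s}-p+1,\ldots,\m{\s}\}$, and $\m{\s}-p\ge 1$ by Remark \ref{remark:range of m_sigma}. By Remark \ref{remark:C_i} \ref{in C_i to C_i+1}, to deduce $\s\notin\C_{\m{\s}-p+1}$ it is enough to prove that \emph{both} $\di{\s}{\m{\s}-p}$ and $\un{\s}{\m{\s}-p}$ lie in $\C_{\m{\s}-p}$. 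One of these two sets is $\s$ itself, which is already known to be in $\C_{\m{\s}-p}$; write $\rho$ for the other one (so $\rho=\di{\s}{\m{\s}-p}$ if $\m{\s}-p\in\s$, and $\rho=\un{\s}{\m{\s}-p}$ otherwise), and note that $\rho^c$ is obtained from $\s^c$ by adding or removing the single element $\m{\s}-p$. Thus the whole argument reduces to showing $\rho\in\C_{\m{\s}-p}$.

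First I would prove $\rho\in\C_1$ via Proposition \ref{proposition:first matching} \ref{sigma (not) in C iff}, which requires checking that $\rho\in\Dt{2}{\c}$ and that every disconnected $2$-set in $\rho^c$ contains $0$. For the first point, the hypothesis provides a disconnected $2$-set $\{0,v\}$ in $\s^c$ with $v\ne\m{\s}-p$; since also $0\ne\m{\s}-p$, neither endpoint is the toggled element, so $\{0,v\}\subseteq\rho^c$ and hence $\rho\in\Dt{2}{\c}$ by Remark \ref{remark:face of 2-cut complex}. For the second point, let $\{a,b\}$ be any disconnected $2$-set in $\rho^c$. If $\m{\s}-p\notin\{a,b\}$, then $\{a,b\}$ is also a disconnected $2$-set in $\s^c$, hence contains $0$ because $\s\in\C_1$. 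If instead, say, $a=\m{\s}-p$, then $b$ lies in $\s^c$ and is different from $\m{\s}-p$, so $b\in\{0\}\cup\{\m{\s}-p+1,\ldots,\m{\s}\}$ by Proposition \ref{proposition:n>=3p+1 and sigma in C_m-p}; since $1\le\m{\s}-p<\m{\s}\le n-1$ there is no wraparound, so $\m{\s}-p$ is adjacent in $\c$ to every vertex of $\{\m{\s}-p+1,\ldots,\m{\s}\}$, which forces $b=0$. In both cases $\{a,b\}$ contains $0$, so $\rho\in\C_1$.

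It remains to compute $\max(\rho^c)$. Since $\m{\s}\in\s^c$ and $\m{\s}\ne\m{\s}-p$, we have $\m{\s}\in\rho^c$, while every element of $\rho^c$ lies in $\s^c\cup\{\m{\s}-p\}\subseteq\{0,\m{\s}-p,\ldots,\m{\s}\}$; hence $\m{\rho}=\max(\rho^c)=\m{\s}$. Applying Proposition \ref{proposition:n>=3p+1 and sigma in C_m-p} to $\rho\in\C_1$ then gives $\rho\in\C_{\m{\rho}-p}=\C_{\m{\s}-p}$, and the proof is complete.

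The step I expect to need the most care is the case analysis in the second paragraph: one must check that toggling $\m{\s}-p$ in $\s^c$ neither destroys the witnessing $2$-set $\{0,v\}$ (which is where $v\ne\m{\s}-p$ enters) nor creates a new disconnected $2$-set of $\rho^c$ avoiding $0$ (which is exactly where the sharp containment $\s^c\subseteq\{0,\m{\s}-p,\ldots,\m{\s}\}$ from Proposition \ref{proposition:n>=3p+1 and sigma in C_m-p} and the estimate $1\le\m{\s}-p<\m{\s}\le n-1$ are used). Everything else is formal manipulation of the sets $\C_i$ via Remark \ref{remark:C_i} together with a second application of Proposition \ref{proposition:n>=3p+1 and sigma in C_m-p}.
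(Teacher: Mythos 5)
Your proposal is correct and follows essentially the same route as the paper: both arguments show that the partner of $\s$ under toggling the vertex $\m{\s}-p$ lies in $\C_{\m{\s}-p}$ (via a second application of Proposition \ref{proposition:n>=3p+1 and sigma in C_m-p}, after noting that $\max$ of the complement is unchanged), and then conclude with Remark \ref{remark:C_i}. The only difference is organizational: where the paper splits into the cases $\m{\s}-p\in\s$ and $\m{\s}-p\notin\s$ and invokes parts \ref{sigma minus u in C} and \ref{sigma union u in C} of Proposition \ref{proposition:first matching} respectively, you verify membership of the toggled face in $\C_1$ directly from part \ref{sigma (not) in C iff} by a uniform case analysis on disconnected $2$-sets, which amounts to the same computation.
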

	
	\begin{proof}
		Since $\s\in\C_1$, it follows by Proposition \ref{proposition:n>=3p+1 and sigma in C_m-p} that $\s\in\C_{\m{\s}-p}$ and $\s^c\subseteq\{0,\m{\s}-p,\m{\s}-p+1,\ldots,\m{\s}\}$. We have either $\m{\s}-p\in\s$ or $\m{\s}-p\notin\s$.
		
		First, suppose that $\m{\s}-p\in\s$. Then $\m{\s}-p\notin(\un{\s}{0})^c$. Hence $(\un{\s}{0})^c\subseteq\{\m{\s}-p+1,\m{\s}-p+2,\ldots,\m{\s}\}$, and thus $\m{\s}-p\sim u$ for all $u\in(\un{\s}{0})^c$. 
		Since $\s\in\C_1$, we get $\di{\s}{\m{\s}-p}\in\C_1$ by Proposition \ref{proposition:first matching} \ref{sigma minus u in C}. Observe that $\m{(\di{\s}{\m{\s}-p})}=\m{\s}$. 
		By Proposition \ref{proposition:n>=3p+1 and sigma in C_m-p}, $\di{\s}{\m{\s}-p}\in\C_1$ implies $\di{\s}{\m{\s}-p}\in\C_{\m{\s}-p}$. Since $\un{\s}{\m{\s}-p}=\s\in\C_{\m{\s}-p}$, we have $\s\notin\C_{\m{\s}-p+1}$ by Remark \ref{remark:C_i} \ref{in C_i to C_i+1}.
		
		Now, let $\m{\s}-p\notin\s$. Since there exists a disconnected $2$-set $\{0,v\}$ in $\s^c$ such that $v\neq \m{\s}-p$, we get $\s\cup\{\m{\s}-p\}\in\Dt{2}{\c}$.  
		By Proposition \ref{proposition:first matching} \ref{sigma union u in C}, $\s\cup\{\m{\s}-p\}\in\C_1$ (as $\s\in\C_1$). Since $\m{\s}-p\neq\m{\s}$, we get $\m{(\un{\s}{\m{\s}-p})}=\m{\s}$. 
		It follows that $\s\cup\{\m{\s}-p\}\in\C_{\m{\s}-p}$ by Proposition \ref{proposition:n>=3p+1 and sigma in C_m-p}. We have $\di{\s}{\m{\s}-p}=\s\in\C_{\m{\s}-p}$. By Remark \ref{remark:C_i} \ref{in C_i to C_i+1}, $\s\notin\C_{\m{\s}-p+1}$.
	\end{proof}
	
	\begin{proposition}\label{proposition:in C_n-p}
		Let $n\ge 3p+1$. For $\s\in\Dt{2}{\c}$, we have the following. 
		\begin{enumerate}[label=(\roman*)]
			\item \label{short range of m_sigma} If $\{0,\m{\s}-p\}$ is the only disconnected $2$-set in $\s^c$, then $\m{\s}\in\{n-p,n-p+1,\ldots,n-1\}$. 
			\item \label{C_n-p} $\s\in\C_{n-p}$ \iff $\{0,\m{\s}-p\}$ is the only disconnected $2$-set in $\s^c$.
		\end{enumerate}
	\end{proposition}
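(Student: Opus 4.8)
The plan is to prove the two parts in turn, with Propositions \ref{proposition:n>=3p+1 and sigma in C_m-p} and \ref{proposition:matched in m-p} doing the heavy lifting. Part \ref{short range of m_sigma} is a short adjacency count in $\c$. Suppose $\{0,\m{\s}-p\}$ is the only disconnected $2$-set in $\s^c$. Then $\m{\s}\in\s^c$ with $\m{\s}\neq 0$ and $\m{\s}\neq\m{\s}-p$, so $\{0,\m{\s}\}$ cannot be a disconnected $2$-set, i.e., $0\sim\m{\s}$; hence $\m{\s}\in\{1,\ldots,p\}\cup\{n-p,\ldots,n-1\}$. On the other hand, $0\nsim\m{\s}-p$ forces $\m{\s}-p\in\{p+1,\ldots,n-p-1\}$, so $\m{\s}\geq 2p+1$. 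Since $n\geq 3p+1$ gives $2p+1\leq n-p$, the two constraints together yield $\m{\s}\in\{n-p,\ldots,n-1\}$.

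For the forward implication of Part \ref{C_n-p}, suppose $\s\in\C_{n-p}$. Then $\s\in\C_1\subseteq\Dt{2}{\c}$, so $\s^c$ has a disconnected $2$-set, and by Proposition \ref{proposition:first matching} \ref{sigma (not) in C iff} every such set contains $0$. If $\{0,\m{\s}-p\}$ were not the unique disconnected $2$-set in $\s^c$, then (regardless of whether $\{0,\m{\s}-p\}$ is itself disconnected) there would be a disconnected $2$-set $\{0,v\}$ in $\s^c$ with $v\neq\m{\s}-p$, and Proposition \ref{proposition:matched in m-p} would give $\s\notin\C_{\m{\s}-p+1}$. But $\m{\s}\leq n-1$ gives $\m{\s}-p+1\leq n-p$, whence $\C_{n-p}\subseteq\C_{\m{\s}-p+1}$ by Remark \ref{remark:C_i} \ref{C_i subseteq}, contradicting $\s\in\C_{n-p}$. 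Therefore $\{0,\m{\s}-p\}$ is the only disconnected $2$-set in $\s^c$.

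For the reverse implication of Part \ref{C_n-p}, suppose $\{0,\m{\s}-p\}$ is the only disconnected $2$-set in $\s^c$. Then $\s\in\C_1$, and Part \ref{short range of m_sigma} gives $\m{\s}\in\{n-p,\ldots,n-1\}$. By Proposition \ref{proposition:n>=3p+1 and sigma in C_m-p}, $\s\in\C_{\m{\s}-p}$ and $\s^c\subseteq\{0,\m{\s}-p,\ldots,\m{\s}\}$; since every $v\in\s^c\setminus\{0,\m{\s}-p\}$ satisfies $0\sim v$ and $v\geq\m{\s}-p+1>p$ (using $\m{\s}\geq n-p$ and $n\geq 3p+1$), this refines to $\s^c\subseteq\{0,\m{\s}-p\}\cup\{n-p,\ldots,\m{\s}\}$. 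Now $\m{\s}-p\in\s^c$ while $(\un{\s}{\m{\s}-p})^c=\s^c\setminus\{\m{\s}-p\}$ contains no disconnected $2$-set, so $\un{\s}{\m{\s}-p}\notin\Dt{2}{\c}$ and hence $\un{\s}{\m{\s}-p}\notin\C_{\m{\s}-p}$; since $\di{\s}{\m{\s}-p}=\s\in\C_{\m{\s}-p}$, Remark \ref{remark:C_i} \ref{in C_i to C_i+1} gives $\s\in\C_{\m{\s}-p+1}$. Next, fix any $r$ with $\m{\s}-p+1\leq r\leq n-p-1$ (if such $r$ exists). The refined inclusion shows $r\in\s$, and one checks that $0\nsim r$ while $1\leq r-(\m{\s}-p)\leq p-1$ and $1\leq w-r\leq p-1$ for all $w\in\{n-p,\ldots,\m{\s}\}$ (here $\m{\s}\geq n-p$ and $r\leq n-p-1$ are essential), so $r\sim w$ for every $w\in\s^c\setminus\{0\}$. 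Consequently $\di{\s}{r}\in\C_1$, $\m{\di{\s}{r}}=\m{\s}$, and $\{0,r\}$ is a disconnected $2$-set in $(\di{\s}{r})^c$ with $r\neq\m{\s}-p$; by Proposition \ref{proposition:matched in m-p}, $\di{\s}{r}\notin\C_{\m{\s}-p+1}$. Since $\un{\s}{r}=\s\in\C_{\m{\s}-p+1}$, applying Remark \ref{remark:C_i} \ref{in C_j+1} to $\s$ over all such $r$ gives $\s\in\C_{n-p}$, as desired.

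I expect the reverse implication of Part \ref{C_n-p} to be the main obstacle, in particular the claim that $\s$ is untouched by each of the element matchings $\M_{\m{\s}-p+1},\ldots,\M_{n-p-1}$. The crux is verifying that each $\di{\s}{r}$ satisfies the hypotheses of Proposition \ref{proposition:matched in m-p}, which reduces to the adjacency estimates $1\leq r-(\m{\s}-p)\leq p-1$ and $1\leq w-r\leq p-1$; these hold precisely because Part \ref{short range of m_sigma} has already forced $\m{\s}\geq n-p$, so that the window $\{\m{\s}-p+1,\ldots,n-p-1\}$ spans fewer than $p$ vertices, each of which is adjacent to all of $\s^c\setminus\{0\}$.
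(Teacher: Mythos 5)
Your proof is correct and follows essentially the same route as the paper's: part (i) via the observation that $\{0,\m{\s}\}$ would otherwise be a second disconnected $2$-set, and part (ii) via the chain $\C_1\to\C_{\m{\s}-p}\to\C_{\m{\s}-p+1}\to\C_{n-p}$ using Propositions \ref{proposition:n>=3p+1 and sigma in C_m-p} and \ref{proposition:matched in m-p} together with Remark \ref{remark:C_i}, removing each $r\in\{\m{\s}-p+1,\ldots,n-p-1\}$ exactly as the paper does. The only cosmetic differences are your explicit "refined inclusion" $\s^c\subseteq\{0,\m{\s}-p\}\cup\{n-p,\ldots,\m{\s}\}$ and your derivation of $\m{\s}\ge 2p+1$ in part (i), neither of which changes the argument.
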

	
	\begin{proof}
		\begin{enumerate}[label=(\roman*)]        
			\item By Remark \ref{remark:range of m_sigma}, $\m{\s}\in\{p+1,p+2,\ldots,n-1\}$. If $\m{\s}\in\{p+1,p+2,\ldots,n-p-1\}$, then $0\nsim \m{\s}$. Since $\{0,\m{\s}\}\subseteq\s^c$, we get a contradiction to the fact that $\{0,\m{\s}-p\}$ is the only disconnected $2$-set in $\s^c$. Hence $\m{\s}\in\{n-p,n-p+1,\ldots,n-1\}$.
			
			\item Suppose $\s\in\C_{n-p}$. Since $p+1\le\m{\s}\leq n-1$, we have $1<\m{\s}-p+1\le n-p$. Hence $\C_{n-p}\subseteq\C_{\m{\s}-p+1}\subseteq\C_1$ implies that $\s\in\C_{\m{\s}-p+1}$ and $\s\in\C_1$. Now, $\s\in\C_1$ implies that every disconnected $2$-set in $\s^{c}$ contains $0$. 
			If $\{0,v\}$ is a disconnected $2$-set in $\s^c$ with $v\neq \m{\s}-p$, then $\s\notin\C_{\m{\s}-p+1}$ by Proposition \ref{proposition:matched in m-p}, a contradiction. Therefore, $\{0,\m{\s}-p\}$ is the only disconnected $2$-set in $\s^c$. 
			
			Conversely, assume that $\{0,\m{\s}-p\}$ is the only disconnected $2$-set in $\s^c$. By Remark \ref{remark:face of 2-cut complex}, $\un{\s}{0}$ and $\un{\s}{\m{\s}-p}\notin\Dt{2}{\c}$. Hence $\s\in\C_1$, and thus $\s\in\C_{\m{\s}-p}$ by Proposition \ref{proposition:n>=3p+1 and sigma in C_m-p}. 
			Moreover, since $\C_{\m{\s}-p}\subseteq\Dt{2}{\c}$, $\un{\s}{\m{\s}-p}\notin\C_{\m{\s}-p}$. Therefore, $\s\in\C_{\m{\s}-p+1}$ by Remark \ref{remark:C_i} \ref{in C_i to C_i+1}.
			
			Since $\{0,\m{\s}-p\}$ is the only disconnected $2$-set in $\s^c$, $\m{\s}\in\{n-p,n-p+1,\ldots,n-1\}$ by \ref{short range of m_sigma}. If $\m{\s}=n-1$, then $\m{\s}-p+1=n-p$, and thus $\s\in\C_{\m{\s}-p+1}$ implies that $\s\in\C_{n-p}$, as required.  
			
			Now, assume that $\m{\s}\in\{n-p,n-p+1,\ldots,n-2\}$. Let $\m{\s}-p+1\le j\le n-p-1$. Then $\m{\s}\ge n-p$ and $n\ge 3p+1$ implies that $p+1<j<n-p$, and hence $j\nsim0$. Since $\{0,\m{\s}-p\}$ is the only disconnected $2$-set in $\s^c$, we have $j\in\s$.    
			By Proposition \ref{proposition:n>=3p+1 and sigma in C_m-p}, $\s\in\C_1$ implies that $\s^c\subseteq\{0,\m{\s}-p,\m{\s}-p+1,\ldots,\m{\s}\}$. Since $\m{\s}-p<\m{\s}-p+1\le j\le n-p-1<\m{\s}$, it follows that $j\sim u$ for all $u\in(\un{\s}{0})^c$. 
			Therefore, $\s\in\C_1$ implies that $\di{\s}{j}\in\C_1$ by Proposition \ref{proposition:first matching} \ref{sigma minus u in C}. Note that $\m{(\di{\s}{j})}=\m{\s}$ (as $j<\m{\s}$). 
			Moreover, $\{0,j\}\subseteq(\di{\s}{j})^c$ such that $0\nsim j$ and $j\neq \m{\s}-p$. Hence $\di{\s}{j}\notin\C_{\m{\s}-p+1}$ by Proposition \ref{proposition:matched in m-p}. Since $\s\in\C_{\m{\s}-p+1}$ and $\m{\s}-p+1\le j\le n-p-1$, we get $\s\in\C_{n-p}$ by Remark \ref{remark:C_i} \ref{in C_j+1}. 
		\end{enumerate}
	\end{proof}
	
	\begin{lemma}\label{lemma:n>=3p+1}
		Let $n\geq 3p+1$. Then $\C_{n-p+1}=\{\t\}$, where $\t^c=\{0,n-2p,n-p\}$.
	\end{lemma}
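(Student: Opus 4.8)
The plan is to read off $\C_{n-p+1}$ from $\C_{n-p}$ by inspecting the element matching $\M_{n-p}$. By Remark~\ref{remark:C_i}~\ref{in C_i to C_i+1}, a face $\s\in\C_{n-p}$ belongs to $\C_{n-p+1}$ exactly when its matching partner under $\M_{n-p}$ --- namely $\di{\s}{n-p}$ if $n-p\in\s$, and $\un{\s}{n-p}$ if $n-p\notin\s$ --- fails to lie in $\C_{n-p}$. So I would first record the shape of a generic $\s\in\C_{n-p}$. Since $\C_{n-p}\subseteq\C_1$, Proposition~\ref{proposition:n>=3p+1 and sigma in C_m-p} gives $\s^c\subseteq\{0\}\cup\{\m{\s}-p,\m{\s}-p+1,\ldots,\m{\s}\}$, while Proposition~\ref{proposition:in C_n-p} says that $\{0,\m{\s}-p\}$ is the only disconnected $2$-set in $\s^c$ and that $\m{\s}\in\{n-p,n-p+1,\ldots,n-1\}$; in particular $\s^c\subseteq\{0\}\cup\{n-2p,n-2p+1,\ldots,n-1\}$.

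The one computation that matters is an adjacency fact in $\c$: the vertex $n-p$ is adjacent to every vertex of $\big(\{0\}\cup\{n-2p,n-2p+1,\ldots,n-1\}\big)\setminus\{n-p\}$. Indeed $0\equiv(n-p)+p\pmod n$, and any $v$ in that range with $v\neq n-p$ satisfies $1\le|v-(n-p)|\le p$. Consequently, for $\s\in\C_{n-p}$, adjoining or deleting $n-p$ neither creates a disconnected $2$-set involving $n-p$ nor affects any pair not involving $n-p$; since also $n-p\notin\{0,\m{\s}-p\}$, the collection of disconnected $2$-sets of the complement of the partner of $\s$ is still exactly $\{\{0,\m{\s}-p\}\}$. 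By the characterization in Proposition~\ref{proposition:in C_n-p}~\ref{C_n-p}, the partner therefore lies in $\C_{n-p}$ if and only if the maximum of its complement is still $\m{\s}$ (so that the unique disconnected $2$-set $\{0,\m{\s}-p\}$ has the required form $\{0,\text{max}-p\}$).

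With this in hand I would split into three cases. If $n-p\in\s$, the partner's complement is $\s^c\cup\{n-p\}$, whose maximum is $\m{\s}$ since $\m{\s}\ge n-p$; so the partner stays in $\C_{n-p}$ and $\s\notin\C_{n-p+1}$. If $n-p\notin\s$ and $\m{\s}>n-p$, the partner's complement is $\s^c\setminus\{n-p\}$, still with maximum $\m{\s}$, so again $\s\notin\C_{n-p+1}$. Finally, if $n-p\notin\s$ and $\m{\s}=n-p$, then Proposition~\ref{proposition:n>=3p+1 and sigma in C_m-p} forces $\s^c\subseteq\{0,n-2p,\ldots,n-p\}$, and since any element of $\s^c$ other than $0$ and $n-2p$ must be adjacent to $0$ (otherwise it yields a second disconnected $2$-set, contradicting Proposition~\ref{proposition:in C_n-p}), such an element must equal $n-p$; hence $\s^c=\{0,n-2p,n-p\}$, i.e.\ $\s=\t$. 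For this $\t$ the partner $\un{\t}{n-p}$ has complement $\{0,n-2p\}$, whose maximum $n-2p$ differs from $\m{\t}=n-p$, so the partner is not in $\C_{n-p}$ and thus $\t\in\C_{n-p+1}$. A direct check that $\t^c=\{0,n-2p,n-p\}$ has $\{0,n-2p\}=\{0,\m{\t}-p\}$ as its unique disconnected $2$-set (using $n\ge 3p+1$) shows $\t\in\C_{n-p}$, so altogether $\C_{n-p+1}=\{\t\}$. The only delicate point is the wrap-around bookkeeping for the vertex $n-p$ and keeping track of how $\max(\s^c)$ moves under the toggle; once the adjacency fact above is established, each case reduces to a short check.
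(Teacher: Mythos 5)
Your proof is correct and follows essentially the same route as the paper: characterize $\C_{n-p}$ via Propositions \ref{proposition:n>=3p+1 and sigma in C_m-p} and \ref{proposition:in C_n-p}, observe that $n-p$ is adjacent to everything in $\{0\}\cup\{n-2p,\ldots,n-1\}\setminus\{n-p\}$, and decide membership in $\C_{n-p+1}$ by testing whether the $\M_{n-p}$-partner stays in $\C_{n-p}$. The only (harmless) divergence is that you certify $\un{\t}{n-p}\notin\C_{n-p}$ directly from Proposition \ref{proposition:in C_n-p}\ref{C_n-p} (the maximum of its complement drops to $n-2p$), whereas the paper routes this through Proposition \ref{proposition:matched in m-p}; your version is a slight streamlining of the same argument.
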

	
	\begin{proof}
		Since $n\geq3p+1$, $0\nsim n-2p$. Hence $\t \in\Dt{2}{\c}$. We first show that $\t\in\C_{n-p+1}$. Observe that $\{0,\m{\t}-p\}=\{0,n-2p\}$ is the only disconnected $2$-set in both $\t^c$ and $(\un{\t}{n-p})^c$. 
		Therefore, $\t\in\C_{n-p}$ by Proposition \ref{proposition:in C_n-p} \ref{C_n-p}. Clearly, $\un{\t}{n-p}\in\Dt{2}{\c}$ and $\un{\t}{0,n-p}\notin\Dt{2}{\c}$. Hence $\un{\t}{n-p}\in\C_1$.  
		Since $\{0,n-2p\}$ is a disconnected $2$-set in $(\un{\t}{n-p})^c$ with $n-2p=\m{(\un{\t}{n-p})}\neq\m{(\un{\t}{n-p})}-p$, it follows from Proposition \ref{proposition:matched in m-p} that $\un{\t}{n-p}\notin\C_{n-3p+1}$.
		Further, $\C_{n-p}\subseteq\C_{n-3p+1}$ (as $n-3p+1<n-p$) implies that $\un{\t}{n-p}\notin\C_{n-p}$. Therefore, $\t\in\C_{n-p}$ implies $\t\in\C_{n-p+1}$ by Remark \ref{remark:C_i} \ref{in C_i to C_i+1}.
		
		Let $\s\in\C_{n-p+1}$. We prove that $\s=\t$. Since $\C_{n-p+1}\subseteq\C_{n-p}\subseteq\ldots\subseteq\C_1$, $\s\in\C_1\cap\C_{n-p}$. 
		From Proposition \ref{proposition:n>=3p+1 and sigma in C_m-p}, $\s^c\subseteq\{0,\m{\s}-p,\m{\s}-p+1,\ldots,\m{\s}\}$, and by Proposition \ref{proposition:in C_n-p} \ref{C_n-p}, $\{0,\m{\s}-p\}$ is the only disconnected $2$-set in $\s^c$. Hence $\m{\s}\in\{n-p,n-p+1,\ldots,n-1\}$ by Proposition \ref{proposition:in C_n-p} \ref{short range of m_sigma}.  
		
		Suppose $\m{\s}\in\{n-p+1,n-p+2,\ldots,n-1\}$. Then $\m{\s}-p<n-p<\m{\s}$. Since $n-p\sim0$ and $\s^c\subseteq\{0,\m{\s}-p,\m{\s}-p+1,\ldots,\m{\s}\}$, it follows that $n-p\sim u$ for all $u\in\s^c$. 
		Therefore, $\{0,\m{\s}-p\}$ is the only disconnected $2$-set in $\s^c$ implies that $\{0,\m{\s}-p\}$ is the only disconnected $2$-set in both $(\di{\s}{n-p})^c$ and $(\un{\s}{n-p})^c$. 
		Moreover, $\m{(\di{\s}{n-p})}=\m{(\un{\s}{n-p})}=\m{\s}$. By Proposition \ref{proposition:in C_n-p} \ref{C_n-p}, $\di{\s}{n-p}$, $\un{\s}{n-p}\in\C_{n-p}$. Hence $\s\notin\C_{n-p+1}$ by Remark \ref{remark:C_i} \ref{in C_i to C_i+1}, a contradiction. So $\m{\s}=n-p$.   
		
		Since $\s^c\subseteq \{0,\m{\s}-p,\m{\s}-p+1,\ldots,\m{\s}\}$, we get $\s^c\subseteq\{0,n-2p,n-2p+1,\ldots,n-p\}$. Moreover, $p\geq 1$ and $n\geq 3p+1$ imply $p+1\le n-2p\leq n-p-1$. 
		This means that $0\nsim u$ for all $u\in\s^c\setminus\{0,n-p\}$. We have $\{0,\m{\s}-p\}=\{0,n-2p\}$ is the only disconnected $2$-set in $\s^c$ and $\m{\s}=n-p\in\s^c$. It follows that $\s^c=\{0,n-2p,n-p\}$. Therefore $\s=\t$.
	\end{proof}
	
	\begin{theorem}\label{theorem:powers_of_cycle} 
		Let $n\geq 2p+2$. Then
		$\Dt{2}{\c}\simeq 
		\begin{cases}
			\mathbb{S}^{\frac{n-4}{2}} & \text{ if }n=2p+2,\\
			\mathbb{S}^{n-4} & \text{ if }n\ge 3p+1.
		\end{cases}$
	\end{theorem}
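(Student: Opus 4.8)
The plan is to run the discrete Morse argument to its conclusion using the matching already in place. Recall that $\M := \bigsqcup_{i=0}^{n-1}\M_i$ is an acyclic matching on $\Dt{2}{\c}$ by Lemma~\ref{lemma:Jonsson acyclic}, and that its critical faces are precisely the elements of $\C_n$, since by the observation following Equation~\eqref{equation:sequence_of_element_matchings} we have $\C_n = \{\s\in\Dt{2}{\c}\,|\,\s\notin\M_i \text{ for all } 0\le i\le n-1\}$. So the entire task reduces to identifying $\C_n$ and then invoking Corollary~\ref{corollary:critical}.

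First I would treat $n = 2p+2$. Lemma~\ref{lemma:n=2p+2} gives $\C_{p+1} = \{\t\}$ with $\t = \{1,2,\ldots,p\}$, and I claim $\C_i = \{\t\}$ for all $p+1\le i\le n$. Indeed, if $\C_i = \{\t\}$ for some $i\ge p+1$, then since $\C_i$ is a one-element set, exactly one of $\t\setminus\{i\}$, $\t\cup\{i\}$ differs from $\t$ and hence fails to lie in $\C_i$; by Remark~\ref{remark:C_i}\ref{in C_i to C_i+1} this forces $\t\in\C_{i+1}$, and $\C_{i+1}\subseteq\C_i$ then gives $\C_{i+1} = \{\t\}$. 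Iterating yields $\C_n = \{\t\}$, so $\t$ is the unique critical face, of dimension $|\t|-1 = p-1 = \tfrac{n-4}{2}$. The case $n\ge 3p+1$ is structurally identical, starting instead from Lemma~\ref{lemma:n>=3p+1}: there $\C_{n-p+1} = \{\t\}$ with $\t^c = \{0,n-2p,n-p\}$, which is a set of three distinct vertices when $n\ge 3p+1$, so $|\t| = n-3$ and $\dim\t = n-4$; the same singleton-stability argument gives $\C_n = \{\t\}$.

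It then remains to verify the hypothesis of Theorem~\ref{theorem:acyclic}, namely that $\emptyset$ is not critical. Since $n\ge 2p+2$, the induced subgraph $\c[V(\c)\setminus\{0\}]$ is not complete, so $\{0\}\in\Dt{2}{\c}$ by Remark~\ref{remark:face of 2-cut complex}; hence $\{\emptyset,\{0\}\}\in\M_0\subseteq\M$, so $\emptyset$ is matched. Thus $\M$ is an acyclic matching with a single critical face, lying in dimension $d$ (with $d = \tfrac{n-4}{2}$ or $d = n-4$ according to the case), and Corollary~\ref{corollary:critical} gives $\Dt{2}{\c}\simeq\bigvee_{1}\mathbb{S}^d = \mathbb{S}^d$, which is the assertion.

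As for where the difficulty lies, in this final step it is essentially behind us: the real content is carried by Propositions~\ref{proposition:C_i and C_i+1 for 1<=i<=p}, \ref{proposition:n>=3p+1 and sigma in C_m-p}, \ref{proposition:matched in m-p}, \ref{proposition:in C_n-p} and the two lemmas, which pin down the sets $\C_i$ and collapse them to one simplex. The only point worth stating explicitly — done above — is why the remaining element matchings $\M_{p+1},\ldots,\M_{n-1}$ (respectively $\M_{n-p+1},\ldots,\M_{n-1}$) create no new critical faces and destroy none, which is simply that a one-element set admits no element-matching pair.
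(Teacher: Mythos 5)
Your proof is correct and follows essentially the same route as the paper's: both reduce to the observation that once some $\C_i$ is a singleton it stays a singleton (a one-element set admits no element-matching pair), then read off $\C_n$ from Lemma~\ref{lemma:n=2p+2} or Lemma~\ref{lemma:n>=3p+1} and apply Corollary~\ref{corollary:critical}. Your explicit check that $\emptyset$ is matched (via $\{0\}\in\Dt{2}{\c}$ for $n\ge 2p+2$) is a small point the paper leaves implicit, but it is welcome.
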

	
	\begin{proof}
		By our construction, $\M=\bigsqcup_{i=0}^{n-1} \M_i$ is an acyclic matching on $\Dt{2}{\c}$ with $\C_n$ as the set of all critical faces. Observe that for any $0\le i\le n-1$, if $\C_i$ is a singleton set, then $\C_i=\C_n$.
		\begin{itemize}
			\item Let $n=2p+2$. Then Lemma \ref{lemma:n=2p+2} implies $\C_{p+1}=\{\s\}$, where $\s=\{1,2,\ldots,p\}$. Therefore $\C_n=\{\s\}$. Note that the dimension of $\s$ is $p-1=\dfrac{n-4}{2}$.
			
			\item Let $n\ge 3p+1$. By Lemma \ref{lemma:n>=3p+1}, $\C_{n-p+1}=\{\t\}$, where $\t^c=\{0,n-2p,n-p\}$. Hence $\C_n=\{\t\}$. Moreover, the dimension of $\t$ is $n-4$.
		\end{itemize}
		
		Result follows from Theorem \ref{theorem:acyclic} and Corollary \ref{corollary:critical}.
	\end{proof}
	
	\subsection{Cartesian product of complete graphs and path graphs} \label{subsection:Km_Pn}
	In this section, we prove Theorem \ref{theorem:Intro_Cartesian_path}. For a positive integer $q$, let $[q]=\{1,2,\ldots,q\}$ and $[0,q]=[q]\cup\{0\}$. The Cartesian product $K_m \square P_n$ has vertex set
	$$V(K_m \square P_n)=V(K_m)\times V(P_n)=\{(i,j)\, |\, i\in[0,m-1]\text{ and } j\in[0,n-1]\}.$$
	For any distinct $(i_1,j_1),\,(i_2,j_2)\in V(K_m \square P_n)$, we have $(i_1,j_1)\sim(i_2,j_2)$ \iff $i_1=i_2$ and $|j_1-j_2|=1$, or $j_1=j_2$. Equivalently, $(i_1,j_1)\nsim(i_2,j_2)$ \iff $i_1= i_2$ and $|j_1-j_2|>1$, or $i_1\neq i_2$ and $j_1\neq j_2$.
	
	For notational convenience, we denote the graph $K_m \square P_n$ by $\g$, and write any vertex of the form $(0,j) \in V(\g)$ simply as $(j)_0$.  
	We now define a sequence of element matchings $\M_{(0)_0},\M_{(1)_0},\ldots,$ $\M_{(n-1)_0}$ on $\Dt{2}{\g}$ and the sets $\C_0,\C_1,\ldots,\C_{n}$, using vertices $(0)_0,(1)_0,\ldots,(n-1)_0$, as in Equation \eqref{equation:sequence_of_element_matchings}.
	By Lemma \ref{lemma:Jonsson acyclic}, $\M:=\bigsqcup_{i=0}^{n-1}\M_{(i)_0}$ is an acyclic matching on $\Dt{2}{\g}$. Clearly, $\C_0=\Dt{2}{\g}$ and $\C_1=\{\s\in\Dt{2}{\g}\, |\, \s\notin\M_{(0)_0}\}$.
	
	\begin{proposition}\label{proposition:Km_Pn not in C_n}
		Let $\s\in\C_1$.
		\begin{enumerate}[label=(\roman*)]
			\item \label{Km_Pn:0 nsim 0} If $\s^c=\{(0)_0,(j)_0\}$ for some $j\in[n-1]$, then $\s,\,\di{\s}{(j-1)_0}\notin\C_{j}$, and hence $\s,\,\di{\s}{(j-1)_0}\notin\C_{n}$.
			
			\item \label{Km_Pn:0 nsim ij} If $\{(1)_0,(2)_0,\ldots,(j)_0\}\subseteq\s,\,\{(0)_0,(i,j)\}\subseteq\s^c$ and $\di{\s}{(j)_0}\in\C_1$ for some $i\in[m-1],\,j\in[n-1]$, then $\s,\,\di{\s}{(j)_0}\notin\C_{n}$.
		\end{enumerate}
	\end{proposition}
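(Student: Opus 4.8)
The plan is to treat both parts with the same device. In the sequence of element matchings $\M_{(0)_0},\dots,\M_{(n-1)_0}$ I locate the single step $\M_{(k)_0}$ at which $\s$ gets paired with the face obtained by deleting $(k)_0$, verify that this pair survives every earlier step, and then conclude from Remark~\ref{remark:C_i}\ref{C_i subseteq} that $\s$ (and its partner) is absent from $\C_{k+1}$, hence from $\C_n$ since $k\le n-1$. The recurring tool is Proposition~\ref{proposition:first matching}\ref{sigma (not) in C iff} with $v=(0)_0$: a face lies in $\C_1$ exactly when every disconnected $2$-set of its complement contains $(0)_0$, so to certify that a face is outside $\C_1$ — and therefore outside every $\C_\ell$ by Remark~\ref{remark:C_i}\ref{C_i subseteq} — it suffices to exhibit one disconnected $2$-set in its complement avoiding $(0)_0$.

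For part \ref{Km_Pn:0 nsim 0}: since $\s\in\C_1\subseteq\Dt{2}{\g}$ and $\s^c=\{(0)_0,(j)_0\}$, the hypothesis is vacuous when $j=1$, so assume $j\ge2$. The relevant step is $\M_{(j-1)_0}$, which pairs $\s$ with $\s':=\di{\s}{(j-1)_0}$; the complement $\{(0)_0,(j-1)_0,(j)_0\}$ of $\s'$ still has $\{(0)_0,(j)_0\}$ as a disconnected $2$-set and none avoiding $(0)_0$, so $\s'\in\C_1$. I then show $\s,\s'\in\C_{j-1}$ by a short induction on $\ell=1,\dots,j-2$: both faces contain $(\ell)_0$, so by Remark~\ref{remark:C_i}\ref{in C_i to C_i+1} persistence from $\C_\ell$ to $\C_{\ell+1}$ reduces to $\di{\s}{(\ell)_0},\di{\s'}{(\ell)_0}\notin\C_\ell$, which follows because each such complement contains $\{(\ell)_0,(j)_0\}$, a disconnected $2$-set (since $|\ell-j|\ge2$) avoiding $(0)_0$. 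As $(j-1)_0\in\s$ and $\s=\un{\s'}{(j-1)_0}$ with both faces in $\C_{j-1}$, the pair $\{\s',\s\}$ lies in $\M_{(j-1)_0}$; hence $\s,\s'\notin\C_j$, and $\C_n\subseteq\C_j$ finishes the part. (For $j=2$ the induction range is empty and $\s,\s'$ are matched directly in $\M_{(1)_0}$.)

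For part \ref{Km_Pn:0 nsim ij}: I first upgrade $\di{\s}{(j)_0}\in\C_1$ to $\s\in\C_1$ — any disconnected $2$-set of $\s^c$ also lies in $(\di{\s}{(j)_0})^c$, hence contains $(0)_0$, while $\{(0)_0,(i,j)\}$ (disconnected as $i\neq0$ and $j\ge1$) witnesses $\s\in\Dt{2}{\g}$, so Proposition~\ref{proposition:first matching}\ref{sigma (not) in C iff} applies. The relevant step is $\M_{(j)_0}$, pairing $\s$ with $\di{\s}{(j)_0}$. I run the analogous induction on $\ell=1,\dots,j-1$: both $\s$ and $\di{\s}{(j)_0}$ contain $(\ell)_0$, and deleting $(\ell)_0$ from either yields a complement containing $\{(\ell)_0,(i,j)\}$, which is disconnected (because $i\neq0$ and $\ell\neq j$) and avoids $(0)_0$; so by Remark~\ref{remark:C_i}\ref{in C_i to C_i+1} both faces persist up to $\C_j$. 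They are then matched in $\M_{(j)_0}$, so both leave $\C_{j+1}$, and $j\le n-1$ forces $\s,\di{\s}{(j)_0}\notin\C_n$.

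The main obstacle is purely organizational: arranging the two inductions so that each step invokes only the coarse fact $\di{\tau}{(\ell)_0}\notin\C_1$ rather than an explicit description of $\C_\ell$, which is precisely what Remark~\ref{remark:C_i}\ref{C_i subseteq} together with \ref{in C_i to C_i+1} makes legitimate. Everything else reduces to adjacency bookkeeping in $\g=K_m\square P_n$ and tracking the trivial edge cases (chiefly $j=1$ in part \ref{Km_Pn:0 nsim 0}).
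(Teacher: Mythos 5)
Your proposal is correct and follows essentially the same strategy as the paper: identify the matching step $\M_{(j-1)_0}$ (resp.\ $\M_{(j)_0}$), show both faces of the pair survive all earlier steps by exhibiting a disconnected $2$-set avoiding $(0)_0$ in the complement of each deleted face, and conclude via Remark~\ref{remark:C_i}. The only cosmetic differences are that you verify $\di{\s}{(j-1)_0}\in\C_1$ directly from Proposition~\ref{proposition:first matching}\ref{sigma (not) in C iff} where the paper uses part \ref{sigma minus u in C}, and you phrase the persistence as a step-by-step induction where the paper invokes Remark~\ref{remark:C_i}\ref{in C_j+1} once.
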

	
	\begin{proof}
		\begin{enumerate}[label=(\roman*)]
			\item Suppose $\s^c=\{(0)_0,(j)_0\}$. Since $\s\in\C_1\subseteq\Dt{2}{\g}$, $(0)_0\nsim(j)_0$ by Remark \ref{remark:face of 2-cut complex}. Hence $j \in [n-1] \setminus \{1\}$. Since $\C_n\subseteq\C_{j}$, it suffices to prove $\s,\,\di{\s}{(j-1)_0}\notin\C_{j}$. 
			
			We have $(\un{\s}{(0)_0})^c=\{(j)_0\}$ and $(j-1)_0\sim(j)_0$. By Proposition \ref{proposition:first matching} \ref{sigma minus u in C}, $\di{\s}{(j-1)_0}\in\C_1$.
			
			If $j=2$, then $\un{\s}{(1)_0}=\s\in\C_1$ and $\di{\s}{(1)_0}\in\C_1$ imply $\s,\,\di{\s}{(1)_0}\notin\C_2=\C_j$ by Remark \ref{remark:C_i} \ref{in C_i to C_i+1}.  
			
			Now, assume $3\le j\le n-1$. Let $1\le r\le j-2$. Then $(r)_0\in\s,\,\di{\s}{(j-1)_0}$. Since $j-r\ge2$, $(j)_0\nsim(r)_0$. This implies $\un{(\di{\s}{(r)_0})}{(0)_0},\,\un{(\di{\s}{(r)_0,(j-1)_0})}{(0)_0}\in\Dt{2}{\g}$. 
			Hence $\di{\s}{(r)_0},\,\di{\s}{(r)_0,(j-1)_0}\notin\C_1$ by Proposition \ref{proposition:first matching} \ref{sigma (not) in C iff}. From Remark \ref{remark:C_i} \ref{in C_j+1}, we obtain $\s,\,\di{\s}{(j-1)_0}\in\C_{j-1}$. Since $\un{\s}{(j-1)_0}=\s$, Remark \ref{remark:C_i} \ref{in C_i to C_i+1} implies $\s,\,\di{\s}{(j-1)_0}\notin\C_j$.
			
			\item Let $i\in[m-1],\,j\in[n-1]$ and suppose that $\{(1)_0,(2)_0,\ldots,(j)_0\}\subseteq\s,\,\{(0)_0,(i,j)\}\subseteq\s^c$ and $\di{\s}{(j)_0}\in\C_1$. Since $\C_n\subseteq\C_{j+1}$ by Remark \ref{remark:C_i} \ref{C_i subseteq}, we prove $\s,\,\di{\s}{(j)_0}\notin\C_{j+1}$.
			
			For $j=1$, $\un{\s}{(1)_0}=\s\in\C_1$ and $\di{\s}{(1)_0}\in\C_1$ imply $\s,\,\di{\s}{(1)_0}\notin\C_2=\C_{j+1}$. 
			
			Now, suppose $2\le j\le n-1$. Let $1\le r\le j-1$. Then $(r)_0\in\s,\,\di{\s}{(j)_0}$ and $(r)_0\nsim (i, j)$ (as $i\neq 0$ and $j\neq r$). Hence $\un{(\di{\s}{(r)_0})}{(0)_0}$, $\un{(\di{\s}{(r)_0,(j)_0})}{(0)_0}\in\Dt{2}{\g}$. 
			From Proposition \ref{proposition:first matching} \ref{sigma (not) in C iff}, $\di{\s}{(r)_0}$, $\di{\s}{(r)_0,(j)_0}\notin\C_1$. Therefore $\s,\,\di{\s}{(j)_0}\in\C_{j}$ by Remark \ref{remark:C_i} \ref{in C_j+1}, and thus $\s,\,\di{\s}{(j)_0}\notin\C_{j+1}$ by Remark \ref{remark:C_i} \ref{in C_i to C_i+1}. 
		\end{enumerate}
	\end{proof}
	
	\begin{theorem}\label{theorem:K_m[]P_n}
		For $m,n\geq2$, $\Dt{2}{K_m \square P_n}\simeq\bigvee_{(m-1)(n-1)}\bb{S}^{mn-4}.$
	\end{theorem}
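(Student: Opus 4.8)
The plan is to apply Theorem~\ref{theorem:acyclic} and Corollary~\ref{corollary:critical} to the acyclic matching $\M=\bigsqcup_{i=0}^{n-1}\M_{(i)_0}$ constructed above. By Lemma~\ref{lemma:Jonsson acyclic} this matching is acyclic and its set of critical faces is $\C_n$; since $\{(0)_0\}\in\Dt{2}{\g}$ for $m,n\ge 2$, the empty set is matched and hence not critical. So it remains to prove that $\C_n$ is precisely the collection of the $(m-1)(n-1)$ faces $\s$ with $\s^c=\{(0)_0,(i,j),(i,j+1)\}$ for $i\in[m-1]$ and $0\le j\le n-2$, each of dimension $(mn-3)-1=mn-4$; the theorem then follows.

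I would first describe $\C_1$. Applying Proposition~\ref{proposition:first matching} with $v=(0)_0$, a face $\s$ lies in $\C_1$ exactly when $(0)_0\in\s^c$ and $\g[\s^c\setminus\{(0)_0\}]$ is a complete graph. Using that every clique of $K_m\square P_n$ is either contained in one column $\{(0,j),\dots,(m-1,j)\}$ or is a horizontal edge $\{(i,j),(i,j+1)\}$, together with the requirement that $\s^c$ contain a disconnected $2$-set, one checks that $\s^c$ has exactly one of three forms: (1) $\{(0)_0\}\cup K$, where $K$ is a nonempty clique contained in some column $\{(0,j),\dots,(m-1,j)\}$ with $j\ge 1$ and $K$ contains a vertex of row-index $\ge 1$; (2) $\{(0)_0,(0,j)\}$ or $\{(0)_0,(0,j-1),(0,j)\}$ with $j\ge 2$; or (3) $\{(0)_0,(i,j),(i,j+1)\}$ with $i\in[m-1]$ and $0\le j\le n-2$.

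The main step is to show that every face of $\C_1$ of form (1) or (2) is matched by $\M$. Faces of form (2) are killed directly by Proposition~\ref{proposition:Km_Pn not in C_n} \ref{Km_Pn:0 nsim 0} (applied to the facet $\{(0)_0,(0,j)\}$, which knocks out both the facet and the triple $\{(0)_0,(0,j-1),(0,j)\}$). For a face $\s$ of form (1) with $\s^c=\{(0)_0\}\cup K$ and $K$ inside column $j$: if $(0,j)\notin K$, then $\{(1)_0,\dots,(j)_0\}\subseteq\s$ and one applies Proposition~\ref{proposition:Km_Pn not in C_n} \ref{Km_Pn:0 nsim ij} to $\s$ itself, after checking that $\di{\s}{(j)_0}$ (with complement $\{(0)_0\}\cup K\cup\{(0,j)\}$, still a column clique together with $(0)_0$) lies in $\C_1$; if $(0,j)\in K$, one applies Proposition~\ref{proposition:Km_Pn not in C_n} \ref{Km_Pn:0 nsim ij} instead to $\un{\s}{(j)_0}$ (which again lies in $\C_1$, with complement $\{(0)_0\}\cup(K\setminus\{(0,j)\})$), using $\di{(\un{\s}{(j)_0})}{(j)_0}=\s$. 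I expect this to be the main obstacle: the hypothesis ``$\di{\s}{(j)_0}\in\C_1$'' of Proposition~\ref{proposition:Km_Pn not in C_n} \ref{Km_Pn:0 nsim ij} must be verified in each subcase, and the argument splits according to whether $(0,j)$ lies in $\s$ or in $\s^c$; all the verifications reduce to the fact that inserting or deleting the row-$0$ vertex $(0,j)$ into a set whose non-$(0)_0$ part sits inside column $j$ never creates a disconnected $2$-set avoiding $(0)_0$.

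Finally I would verify that each face $\s$ with $\s^c=\{(0)_0,(i,j),(i,j+1)\}$, $i\in[m-1]$, is critical. It belongs to $\C_1$ since $(i,j)\sim(i,j+1)$ while $(0)_0\nsim(i,j+1)$. It is not matched ``upward'' in any $\M_{(k)_0}$, since the only row-$0$ vertex of $\s^c$ is $(0)_0$ and $\un{\s}{(0)_0}\notin\Dt{2}{\g}$ (its complement $\{(i,j),(i,j+1)\}$ is an edge). It is not matched ``downward'' either: for $k\in[n-1]$ the complement of $\di{\s}{(k)_0}$ is $\{(0)_0,(i,j),(i,j+1),(0,k)\}$, and since $(0,k)$ cannot be adjacent to both $(i,j)$ and $(i,j+1)$, this set contains a disconnected $2$-set not containing $(0)_0$, so $\di{\s}{(k)_0}\notin\C_1\supseteq\C_k$. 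Hence $\C_n$ is exactly this family of $(m-1)(n-1)$ faces, all of dimension $mn-4$, and Theorem~\ref{theorem:acyclic} with Corollary~\ref{corollary:critical} gives $\Dt{2}{K_m\square P_n}\simeq\bigvee_{(m-1)(n-1)}\mathbb{S}^{mn-4}$.
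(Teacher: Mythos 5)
Your proposal is correct and follows essentially the same route as the paper: the same sequence of element matchings $\M_{(0)_0},\ldots,\M_{(n-1)_0}$, the same use of Propositions \ref{proposition:first matching} and \ref{proposition:Km_Pn not in C_n} to eliminate all candidates except those with $\s^c=\{(0)_0,(i,j),(i,j+1)\}$, $i\in[m-1]$, and the same verification that these $(m-1)(n-1)$ faces of dimension $mn-4$ are critical. The only (cosmetic) difference is that you organize the elimination by first classifying the cliques $\s^c\setminus\{(0)_0\}$ of $K_m\square P_n$ into column cliques and horizontal edges, whereas the paper splits on the row index of a chosen non-neighbour of $(0)_0$ and then on the shape of the remaining set; the case analysis and all verifications coincide.
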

	
	\begin{proof}
		We first prove that $\s\in\C_n$ \iff $\s^c=\{(0)_0,(i,j-1),(i,j)\}$ for some $i\in[m-1]$ and $j\in[n-1].$ 
		
		First, assume $\s\in\C_n$. Since $\C_{n}\subseteq \C_1\subseteq\Dt{2}{\g}$, $\s\in\C_1\cap\Dt{2}{\g}$. 
		By Proposition \ref{proposition:first matching} \ref{sigma (not) in C iff}, every disconnected $2$-set in $\s^c$ contains $(0)_0$. It follows that there $\s^c=\{(0)_0,(i,j)\}\cup X$ for some $(i,j)\in V(\g)$ and $X\subseteq V(\g)$ such that $(i,j)\nsim(0)_0$ and $\g[\un{X}{(i,j)}]$ is a complete graph. Since $(i,j)\in V(\g)$, we have $i\in[0,m-1]$.
		\begin{itemize}
			\item Suppose $i=0$. Then $(i,j)=(j)_0\nsim(0)_0$ implies $j\in\di{[n-1]}{1}$. Since $\g[\un{X}{(j)_0}]$ is a complete graph, either (i) $X=\emptyset$, or (ii) $X=\{(j-1)_0\}$, or (iii) $X\subseteq\{(i',j)\, |\, i'\in[m-1]\}$ with $X\ne\emptyset$. 
			
			We have $\s^c=\{(0)_0,(j)_0\}\cup X$. If $X=\emptyset$, then $\s^c=\{(0)_0,(j)_0\}$, and if $X=\{(j-1)_0\}$, then $(\un{\s}{(j-1)_0})^c=\{(0)_0,(j)_0\}$. In either case, Proposition \ref{proposition:Km_Pn not in C_n} \ref{Km_Pn:0 nsim 0} contradicts $\s\in\C_n$. 
			
			Now, assume $X\subseteq\{(i',j)\, |\, i'\in[m-1]\}$ with $X\ne\emptyset$. Then there exists $t\in[m-1]$ such that $(t,j)\in X$. Observe that $\{(1)_0,(2)_0,\ldots,(j)_0\}\subseteq\un{\s}{(j)_0}$ and $\{(0)_0,(t,j)\}\subseteq(\un{\s}{(j)_0})^c$. 
			Since $\s\in\C_1$, applying Proposition \ref{proposition:Km_Pn not in C_n} \ref{Km_Pn:0 nsim ij} to $\un{\s}{(j)_0}$ gives $\s\notin\C_n$, again a contradiction. Hence $i\ne0$.
			
			\item Suppose $i\in[m-1]$. Then $(i,j)\nsim(0)_0$ implies $j\in[n-1]$. Since $\g[\un{X}{(i,j)}]$ is a complete graph, either (i) $X=\emptyset$, or (ii) $X=\{(i,j-1)\}$, or (iii) $X\subseteq\{(i',j)\, |\, i'\in\di{[0,m-1]}{i}\}$ with $X\ne\emptyset$.
			
			If $X=\emptyset$, then $\s^c=\{(0)_0,(i,j)\}$ and $\{(1)_0,(2)_0,\ldots,(j)_0\}\subseteq\s$. Since $\s\in\C_1$, $(\un{\s}{(0)_0})^c=\{(i,j)\}$ and $(j)_0\sim(i,j)$, we have $\di{\s}{(j)_0}\in\C_1$ by Proposition \ref{proposition:first matching} \ref{sigma minus u in C}. Hence $\s\notin\C_n$ by Proposition \ref{proposition:Km_Pn not in C_n} \ref{Km_Pn:0 nsim ij}, a contradiction.
			
			Now, suppose $X\subseteq\{(i',j)\, |\, i'\in\di{[0,m-1]}{i}\}$ with $X\ne\emptyset$. We have $\s^c=\{(0)_0,(i,j)\}\cup X$. If $(j)_0\in X$, then $\{(1)_0,(2)_0,\ldots,(j)_0\}\subseteq\un{\s}{(j)_0}$. 
			Moreover, $\{(0)_0,(i,j)\}\subseteq(\un{\s}{(j)_0})^c$ and $\s\in\C_1$. Applying Proposition \ref{proposition:Km_Pn not in C_n} \ref{Km_Pn:0 nsim ij} to $\un{\s}{(j)_0}$, we get $\s\notin\C_n$, a contradiction. 
			If $(j)_0\notin X$, then $\{(1)_0,(2)_0,\ldots,(j)_0\}\subseteq\s$. Observe that $\g[(\un{(\di{\s}{(j)_0})}{(0)_0})^c]$ is a complete graph. 
			Hence $\di{\s}{(j)_0}\in\C_1$ by Proposition \ref{proposition:first matching} \ref{sigma minus u in C}. By Proposition \ref{proposition:Km_Pn not in C_n} \ref{Km_Pn:0 nsim ij}, $\s\notin\C_n$, a contradiction.
			
			Therefore $X=\{(i,j-1)\}$. 
		\end{itemize}
		This means that $\s^c=\{(0)_0,(i,j-1),(i,j)\}\text{ for some }i\in[m-1]\text{ and }j\in[n-1].$ 
		
		Conversely, assume that $\s^c=\{(0)_0,(i,j-1),(i,j)\}$ for some $i\in[m-1]$ and $j\in[n-1]$. Since $(0)_0\nsim(i,j)$ and $(i,j-1)\sim(i,j)$, it follows that $\s\in\Dt{2}{\g}$ and $\un{\s}{0}\notin\Dt{2}{\g}$. Therefore $\s\in\C_1$ by Proposition \ref{proposition:first matching} \ref{sigma (not) in C iff}.
		
		Let $1\le r\le n-1$. Then $(r)_0\in\s$. Observe that if $r=j$, then $(r)_0\nsim(i,j-1)$; and if $r\ne j$, then $(r)_0\nsim(i,j)$. Hence $\un{(\di{\s}{(r)_0})}{(0)_0}\in\Dt{2}{\g}$. 
		By Proposition \ref{proposition:first matching} \ref{sigma (not) in C iff}, $\di{\s}{(r)_0}\notin\C_1$. Hence $\s\in\C_{n}$ by Remark \ref{remark:C_i} \ref{in C_j+1}.
		
		We conclude that $\C_n=\{\s\in\Dt{2}{\g} \mid \s^c=\{(0)_0,(i,j-1),(i,j)\}$ for some $i\in[m-1],\,j\in[n-1]\}$.
		Note that $|\C_n|=(m-1)(n-1)$ and each $\s\in\C_n$ has dimension $mn-4$. Hence the result follows by Theorem \ref{theorem:acyclic} and Corollary \ref{corollary:critical}. 
	\end{proof}
	
	\subsection{Cartesian product of complete graphs and cycle graphs} \label{subsection:Km_Cn} In this section, we prove Theorem \ref{theorem:Intro_Cartesian_cycle}. For a positive integer $q$, we use the notation $[q]$ and $[0,q]$ from the previous section. The Cartesian product $K_m\square C_n$ has vertex set $$V(K_m\square C_n)=V(K_m) \times V(C_n)=\{(i,j) \mid i\in [0,m-1]\text{ and }j\in [0,n-1]\}.$$ 
	For any distinct $(i_1,j_1),\,(i_2,j_2)\in V(K_m\square C_n)$, $(i_1,j_1)\sim(i_2,j_2)$ \iff $i_1=i_2$ and $|j_1-j_2|\in\{1, n-1\}$, or $j_1=j_2$. Equivalently, $(i_1,j_1)\nsim(i_2,j_2)$ \iff $i_1= i_2$ and $1<|j_1-j_2|<n-1$, or $i_1\neq i_2$ and $j_1\neq j_2$. 
	
	The proof of Theorem \ref{theorem:Intro_Cartesian_cycle} follows the same approach as that of Theorem \ref{theorem:Intro_Cartesian_path}. The only difference arises from the adjacency of vertices $0$ and $n-1$ in the cycle graph $C_n$. 
	
	Let $\gp$ denote the graph $K_m \square C_n$. We write $(j)_0$ for a vertex of the form $(0, j) \in V(\gp)$. We define a sequence of element matchings $\M_{(0)_0},\M_{(1)_0},\ldots,\M_{(n-1)_0}$ on $\Dt{2}{\gp}$ and the sets $\C_0,\C_1,\ldots,\C_{n}$, using vertices $(0)_0,(1)_0,\ldots,(n-1)_0$, as in Equation \eqref{equation:sequence_of_element_matchings}.
	Then $\M:=\bigsqcup_{i=0}^{n-1}\M_{(i)_0}$ is an acyclic matching on $\Dt{2}{\gp}$. We have $\C_0=\Dt{2}{\gp}$ and $\C_1=\{\s\in\Dt{2}{\gp}\, |\, \s\notin\M_{(0)_0}\}$.
	
	The following result is analogous to Proposition \ref{proposition:Km_Pn not in C_n}.
	\begin{proposition}\label{proposition:Km_Cn not in C_n}
		Let $\s\in\C_1$.
		\begin{enumerate}[label=(\roman*)]
			\item \label{Km_Cn:0 nsim 0} If $\s^c=\{(0)_0,(j)_0\}$ for some $j\in[n-1]$, then $\s,\,\di{\s}{(j-1)_0}\notin\C_{j}$, and hence $\s,\,\di{\s}{(j-1)_0}\notin\C_{n}$.
			
			\item \label{Km_Cn:0 nsim ij} If $\{(1)_0,(2)_0,\ldots,(j)_0\}\subseteq\s,\,\{(0)_0,(i,j)\}\subseteq\s^c$ and $\di{\s}{(j)_0}\in\C_1$ for some $i\in[m-1],\,j\in[n-1]$, then $\s,\,\di{\s}{(j)_0}\notin\C_{n}$.
		\end{enumerate}
	\end{proposition}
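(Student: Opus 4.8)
The plan is to transcribe the proof of Proposition \ref{proposition:Km_Pn not in C_n} almost verbatim; the only changes come from the fact that in $K_m\square C_n$ two distinct vertices $(j_1)_0$ and $(j_2)_0$ are adjacent precisely when $|j_1-j_2|\in\{1,n-1\}$, whereas in $K_m\square P_n$ only $|j_1-j_2|=1$ occurs. In both parts the strategy is identical: using Proposition \ref{proposition:first matching} together with Remark \ref{remark:C_i}, I would push $\s$ and the companion face $\di{\s}{(j-1)_0}$ (resp. $\di{\s}{(j)_0}$) down into a common $\C_\ell$ with $\ell\le n$, observe that these two faces differ by the vertex $(j-1)_0$ (resp. $(j)_0$), which lies in $\s$, so that they form a matched pair in $\M_{(j-1)_0}$ (resp. $\M_{(j)_0}$), and then invoke Remark \ref{remark:C_i} \ref{in C_i to C_i+1} to conclude that both faces leave $\C_{\ell+1}$, and hence $\C_n$ since $\C_n\subseteq\C_{\ell+1}$ by Remark \ref{remark:C_i} \ref{C_i subseteq}.

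For part \ref{Km_Cn:0 nsim 0}, the first step is to record that $\s\in\C_1\subseteq\Dt{2}{\gp}$ together with $\s^c=\{(0)_0,(j)_0\}$ forces $(0)_0\nsim(j)_0$, which in the cycle means $1<j<n-1$; this is the one place where the cycle genuinely differs from the path, for which only $j\ne 1$ was needed (in particular $j=n-1$ cannot occur since $(0)_0\sim(n-1)_0$). Since $(\un{\s}{(0)_0})^c=\{(j)_0\}$ and $(j-1)_0\sim(j)_0$, Proposition \ref{proposition:first matching} \ref{sigma minus u in C} gives $\di{\s}{(j-1)_0}\in\C_1$, and the case $j=2$ is finished immediately by Remark \ref{remark:C_i} \ref{in C_i to C_i+1}. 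For $3\le j\le n-2$ and each $1\le r\le j-2$, I would check $(r)_0\nsim(j)_0$ using $2\le j-r\le j-1\le n-3$, so that $1<j-r<n-1$; then the complements of $\un{(\di{\s}{(r)_0})}{(0)_0}$ and $\un{(\di{\s}{(r)_0,(j-1)_0})}{(0)_0}$ still contain the disconnected $2$-set $\{(r)_0,(j)_0\}$, so these sets lie in $\Dt{2}{\gp}$ and hence $\di{\s}{(r)_0},\,\di{\s}{(r)_0,(j-1)_0}\notin\C_1$ by Proposition \ref{proposition:first matching} \ref{sigma (not) in C iff}. Remark \ref{remark:C_i} \ref{in C_j+1} then places $\s$ and $\di{\s}{(j-1)_0}$ in $\C_{j-1}$, and since $(j-1)_0\in\s$ they are matched in $\M_{(j-1)_0}$, so $\s,\,\di{\s}{(j-1)_0}\notin\C_j$ by Remark \ref{remark:C_i} \ref{in C_i to C_i+1}, whence not in $\C_n$.

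For part \ref{Km_Cn:0 nsim ij}, nothing in the path argument used any cycle-specific adjacency, so the proof carries over word for word: from $\{(1)_0,\ldots,(j)_0\}\subseteq\s$ and $\{(0)_0,(i,j)\}\subseteq\s^c$ with $i\in[m-1]$, one has $(r)_0\nsim(i,j)$ for every $1\le r\le j-1$ (the first coordinates $0$ and $i$ differ and the second coordinates $r$ and $j$ differ), hence $\di{\s}{(r)_0},\,\di{\s}{(r)_0,(j)_0}\notin\C_1$; Remark \ref{remark:C_i} \ref{in C_j+1} gives $\s,\,\di{\s}{(j)_0}\in\C_j$ (using $\di{\s}{(j)_0}\in\C_1$ from the hypothesis), these two are matched in $\M_{(j)_0}$ because $(j)_0\in\s$, and Remark \ref{remark:C_i} \ref{in C_i to C_i+1} forces $\s,\,\di{\s}{(j)_0}\notin\C_{j+1}$, hence $\notin\C_n$; the case $j=1$ is immediate since then $\un{\s}{(1)_0}=\s\in\C_1$ and $\di{\s}{(1)_0}\in\C_1$ already form a matched pair in $\M_{(1)_0}$. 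There is no real obstacle here; the only thing one must be careful about is the index bookkeeping in part \ref{Km_Cn:0 nsim 0}, namely confirming that on the ranges that actually occur ($1\le r\le j-2$, $3\le j\le n-2$) the difference $j-r$ stays strictly between $1$ and $n-1$, so that the non-adjacencies used for $(r)_0$ and $(j)_0$ genuinely hold once $|j_1-j_2|=n-1$ also counts as an edge.
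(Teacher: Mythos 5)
Your proposal is correct and matches the paper's proof, which likewise reduces both parts to the argument for $K_m\square P_n$ after noting exactly the two cycle-specific adjustments you identify: $(0)_0\nsim(j)_0$ now forces $j\in[n-2]\setminus\{1\}$, and for $1\le r\le j-2$ with $3\le j\le n-2$ one has $2\le j-r\le n-3$, so $(r)_0\nsim(j)_0$ still holds. Part (ii) is indeed verbatim the path case, as the paper also states.
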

	
	\begin{proof}
		\begin{enumerate}[label=(\roman*)]
			\item We use a similar argument as in the proof of Proposition \ref{proposition:Km_Pn not in C_n} \ref{Km_Pn:0 nsim 0}. Note that in the proof of Proposition \ref{proposition:Km_Pn not in C_n} \ref{Km_Pn:0 nsim 0}, $(0)_0\nsim(j)_0$ \iff $j\in\di{[n-1]}{1}$, and if $3\le j\le n-1$, then for any $1\le r\le j-2$, we have $j-r\ge 2$, which implies $(j)_0\nsim (r)_0$. 
			Here, $(0)_0\nsim(j)_0$ \iff $j\in\di{[n-2]}{1}$, and if $3\le j\le n-2$, then for any $1\le r\le j-2$, we have $2\le j-r\le n-3$, which implies $(j)_0\nsim (r)_0$. Using these results and following the proof of Proposition \ref{proposition:Km_Pn not in C_n} \ref{Km_Pn:0 nsim 0}, we are done.
			\item The proof is identical to the proof of Proposition \ref{proposition:Km_Pn not in C_n} \ref{Km_Pn:0 nsim ij}.
		\end{enumerate}
	\end{proof}
	
	\begin{theorem}\label{theorem:K_m[]C_n}
		For $m \geq 2$ and $n\geq 4$, $\Dt{2}{K_m\square C_n}\simeq\bigvee_{n(m-1)+1}\bb{S}^{mn-4}.$
	\end{theorem}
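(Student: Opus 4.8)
The plan is to mimic the proof of Theorem~\ref{theorem:K_m[]P_n} almost verbatim, adapting it for the extra adjacency between the vertices $(0)_0$ and $(n-1)_0$ coming from the cycle edge $\{0,n-1\}$ in $C_n$. As in the path case, I would first identify the set of critical faces $\C_n$ of the acyclic matching $\M=\bigsqcup_{i=0}^{n-1}\M_{(i)_0}$ on $\Dt{2}{\gp}$, and then count them and read off their common dimension.

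First I would prove that $\s\in\C_n$ if and only if $\s^c=\{(0)_0,(i,j-1),(i,j)\}$ for some $i\in[m-1]$ and $j\in[n-1]$, \emph{or} $\s^c=\{(0)_0,(i,0),(i,n-1)\}$ for some $i\in[m-1]$, \emph{or} $\s^c=\{(0)_0,(j)_0\colon j\in A\}$ for a suitable ``extra'' configuration that survives because of the symmetry of the cycle. More precisely, the structural analysis is the same: since $\s\in\C_1$ forces every disconnected $2$-set in $\s^c$ to contain $(0)_0$ (Proposition~\ref{proposition:first matching}\ref{sigma (not) in C iff}), we must have $\s^c=\{(0)_0,(i,j)\}\cup X$ with $(i,j)\nsim(0)_0$ and $\gp[X\cup\{(i,j)\}]$ a complete graph. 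The case $i\neq 0$ proceeds exactly as in Theorem~\ref{theorem:K_m[]P_n}, using Proposition~\ref{proposition:Km_Cn not in C_n} in place of Proposition~\ref{proposition:Km_Pn not in C_n}, and yields $\s^c=\{(0)_0,(i,j-1),(i,j)\}$, where now $j-1$ is read modulo $n$, so that $j$ ranges over $[n-1]$ and the ``wrap-around'' face $\s^c=\{(0)_0,(i,0),(i,n-1)\}$ is included. The case $i=0$ is where the cycle differs from the path: in the path, $(0)_0\nsim(j)_0$ forces $j\in[n-1]\setminus\{1\}$ and one rules out every $X$; in the cycle, $(0)_0\nsim(j)_0$ forces $j\in[n-2]\setminus\{1\}$ (since $(0)_0\sim(n-1)_0$ now), and the analogue of Proposition~\ref{proposition:Km_Pn not in C_n} kills the configurations $X=\emptyset$, $X=\{(j-1)_0\}$ and $X\subseteq\{(i',j):i'\in[m-1]\}$, but there is one genuinely new surviving face, which I expect to be $\s$ with $\s^c=\{(0)_0,(1)_0,(2)_0,\ldots,(n-1)_0\}$ (or an equivalent single extra critical face coming from the unique vertex $(0)_0$ together with its whole cycle fiber being disconnected in a way the path cannot realize). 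Verifying that this one extra face is indeed critical — that each $(r)_0$ for $1\le r\le n-1$ gets matched down away from it, so it lies in $\C_n$ — and that no \emph{other} new configuration survives, is the delicate bookkeeping step; it is handled by the modified adjacency facts recorded in the proof of Proposition~\ref{proposition:Km_Cn not in C_n}\ref{Km_Cn:0 nsim 0}.

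Once the characterization of $\C_n$ is established, the count is immediate: there are $(m-1)\cdot n$ faces of the first type (for each $i\in[m-1]$ and each of the $n$ cyclically consecutive pairs $\{(i,j-1),(i,j)\}$), plus the single extra critical face, giving $|\C_n|=n(m-1)+1$; and every such $\s$ has $|\s^c|=3$, hence $\dim\s=mn-4$. Theorem~\ref{theorem:acyclic} and Corollary~\ref{corollary:critical} then give $\Dt{2}{K_m\square C_n}\simeq\bigvee_{n(m-1)+1}\bb{S}^{mn-4}$, and the hypotheses $m\ge 2$, $n\ge 4$ are exactly what is needed for all these faces to be genuine simplices of $\Dt{2}{\gp}$ and for the adjacency computations (e.g.\ $1<|j_1-j_2|<n-1$ being nonvacuous) to behave as in the stated proposition.

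The main obstacle I anticipate is precisely isolating the single extra critical face in the $i=0$ case and proving it is critical: in the path case every configuration with $i=0$ was eliminated, so the appearance of exactly one surviving face in the cycle case (accounting for the ``$+1$'' in the wedge) requires a careful case-by-case elimination of all other $i=0$ configurations using the shifted ranges $j\in[n-2]\setminus\{1\}$ and $2\le j-r\le n-3$ noted in Proposition~\ref{proposition:Km_Cn not in C_n}\ref{Km_Cn:0 nsim 0}, together with an explicit Remark~\ref{remark:C_i}\ref{in C_j+1} argument showing that face survives all $n$ element matchings. Everything else is a routine transcription of the proof of Theorem~\ref{theorem:K_m[]P_n}.
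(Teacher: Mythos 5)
Your overall strategy is the paper's: run the same sequence of element matchings on the fiber $\{(0)_0,\ldots,(n-1)_0\}$, characterize $\C_n$, and count. Your families of critical faces with $i\in[m-1]$ — the $(m-1)(n-1)$ faces $\s^c=\{(0)_0,(i,j-1),(i,j)\}$ plus the $m-1$ wrap-around faces $\s^c=\{(0)_0,(i,0),(i,n-1)\}$ — match the paper's sets $C$ and $B$ exactly. But your identification of the single extra critical face is wrong, and this is a genuine gap rather than a cosmetic one. You propose $\s^c=\{(0)_0,(1)_0,\ldots,(n-1)_0\}$. That face is not even in $\C_1$: for $n\ge 4$ its complement contains the disconnected $2$-set $\{(1)_0,(3)_0\}$ (and for $n=4$ also $\{(1)_0,(3)_0\}$), which does not contain $(0)_0$, so by Proposition \ref{proposition:first matching}\ref{sigma (not) in C iff} it is matched already in $\M_{(0)_0}$. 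It also has $|\s^c|=n$, hence dimension $mn-n-1\neq mn-4$, contradicting your own closing claim that every critical face has $|\s^c|=3$.

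The face you are missing comes from a case your enumeration of the $i=0$ branch omits. When $i=0$ and $\s^c=\{(0)_0,(j)_0\}\cup X$ with $\gp[\un{X}{(j)_0}]$ complete, the cycle admits one possibility absent in the path: $X=\{(n-1)_0\}$ with $j=n-2$, since $(n-1)_0\sim(n-2)_0$ and $(n-1)_0\sim(0)_0$. This gives $\s^c=\{(0)_0,(n-2)_0,(n-1)_0\}$, whose only disconnected $2$-set is $\{(0)_0,(n-2)_0\}$; this is the paper's set $A$ and is the ``$+1$'' in the wedge. Proving it critical requires a short dedicated argument (each $\di{\s}{(r)_0}$ for $1\le r\le n-3$ leaves $\C_1$ because $(r)_0\nsim(n-1)_0$; then one handles $r=n-2$ and $r=n-1$ separately, the last step using Proposition \ref{proposition:Km_Cn not in C_n}\ref{Km_Cn:0 nsim 0} applied to $\un{\s}{(n-1)_0}$). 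With that face substituted for your guess, the count $1+(m-1)+(m-1)(n-1)=n(m-1)+1$ and the dimension $mn-4$ go through as you describe.
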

	
	\begin{proof}
		Let 
		\begin{align*}
			A&=\{\s\subseteq V(\gp)\,|\,\s^c=\{(0)_0,(n-2)_0,(n-1)_0\}\},\\
			B&=\{\s\subseteq V(\gp)\,|\,\s^c=\{(0)_0,(i,0),(i,n-1)\}\text{ for some }i\in[m-1]\},\\
			C&=\{\s\subseteq V(\gp)\,|\,\s^c=\{(0)_0,(i,j-1),(i,j)\}\text{ for some }i\in[m-1],\,j\in[n-1]\}.
		\end{align*}
		Note that $A,\, B$, and $C$ are pairwise disjoint. We show that $\C_n=A\sqcup B\sqcup C$. 
		
		Assume first that $\s\in\C_n$. Then $\s\in\C_1\cap\Dt{2}{\gp}$. Therefore, Proposition \ref{proposition:first matching} \ref{sigma (not) in C iff} implies that every disconnected $2$-set in $\s^c$ contains $(0)_0$.
		This means that $\s^c=\{(0)_0,(i,j)\}\cup X$ for some $(i,j)\in V(\gp)$ and $X\subseteq V(\gp)$ such that $(i,j)\nsim(0)_0$ and $\gp[\un{X}{(i,j)}]$ is a complete graph. Since $(i,j)\in V(\gp)$, we have $i\in[0,m-1]$.
		\begin{itemize}
			\item Suppose $i=0$. Then $(i,j)=(j)_0\nsim(0)_0$ implies $j\in\di{[n-2]}{1}$. Since $\gp[\un{X}{(j)_0}]$ is a complete graph, $X$ satisfies one of the following conditions: (i) $X=\emptyset$, (ii) $X=\{(j-1)_0\}$, (iii) $X=\{(n-1)_0\}$ when $j=n-2$, or (iv) $X\subseteq\{(i',j)\, |\, i'\in[m-1]\}$ with $X\ne\emptyset$.
			
			Using Proposition \ref{proposition:Km_Cn not in C_n} and following the proof of Theorem \ref{theorem:K_m[]P_n}, cases (i), (ii), and (iv) contradict $\s\in\C_n$. Hence, the only possibility is $X=\{(n-1)_0\}$ when $j=n-2$. It follows that $\s^c=\{(0)_0,(n-2)_0,(n-1)_0\}$, and thus $\s\in A$.
			
			\item Suppose $i\in[m-1]$. In this case, $(i,j)\nsim(0)_0$ implies $j\in[n-1]$. Since $\gp[\un{X}{(i,j)}]$ is a complete graph, $X$ satisfies one of the following conditions: (i) $X=\emptyset$, (ii) $X=\{(i,j-1)\}$, (iii) $X=\{(i,0)\}$ when $j=n-1$, or (iv) $X\subseteq\{(i',j)\, |\, i'\in\di{[0,m-1]}{i}\}$ with $X\ne\emptyset$.
			
			Again, following the proof of Theorem \ref{theorem:K_m[]P_n} and using Proposition \ref{proposition:Km_Cn not in C_n}, cases (i) and (iv) contradict $\s\in\C_n$. Hence, either $X=\{(i,j-1)\}$, or $X=\{(i,0)\}$ when $j=n-1$. 
			This means that either $\s^c=\{(0)_0,(i,j-1),(i,j)\}\text{ for some }i\in[m-1]\text{ and }j\in[n-1]$, or $\s^c=\{(0)_0,(i,0),(i,n-1)\}$ for some $i\in[m-1]$. Therefore $\s\in B\sqcup C$.
		\end{itemize}
		We conclude that $\s\in A\sqcup B\sqcup C$.
		
		Conversely, let $\s\in A\sqcup B\sqcup C$.
		\begin{itemize}
			\item Suppose $\s\in A$. Then $\s^c=\{(0)_0,(n-2)_0,(n-1)_0\}$. Further, $(0)_0\nsim(n-2)_0$ and $(n-2)_0\sim(n-1)_0$ implies $\s\in\Dt{2}{\gp}$ and $\un{\s}{0}\notin\Dt{2}{\gp}$. Therefore $\s\in\C_1$ by Proposition \ref{proposition:first matching} \ref{sigma (not) in C iff}.
			
			Let $1\le r\le n-3$. Then $(r)_0\in\s$. Since $1<(n-1)-r<n-1$, $(r)_0\nsim(n-1)_0$. Hence $\un{(\di{\s}{(r)_0})}{(0)_0}\in\Dt{2}{\gp}$, and thus $\di{\s}{(r)_0}\notin\C_1$ by Proposition \ref{proposition:first matching} \ref{sigma (not) in C iff}. From Remark \ref{remark:C_i} \ref{in C_j+1}, $\s\in\C_{n-2}$.
			
			Now, since $(0)_0\sim(n-1)_0$, we have $\un{\s}{(n-2)_0}\notin\Dt{2}{\gp}$. Then $\C_{n-2}\subseteq\Dt{2}{\gp}$ implies $\un{\s}{(n-2)_0}\notin\C_{n-2}$. By Remark \ref{remark:C_i} \ref{in C_i to C_i+1}, $\s\in\C_{n-1}$.
			
			We have $\un{\s}{(n-1)_0}=\{(0)_0,(n-2)_0\}$. Clearly, $\un{\s}{(0)_0,(n-1)_0}\notin\Dt{2}{\gp}$. By Proposition \ref{proposition:first matching} \ref{sigma (not) in C iff}, $\un{\s}{(n-1)_0}\in\C_1$. Applying Proposition \ref{proposition:Km_Cn not in C_n} \ref{Km_Cn:0 nsim 0} to $\un{\s}{(n-1)_0}$ (for $j=n-2$), we get $\un{\s}{(n-1)_0}\notin\C_{n-2}$. Since $\C_{n-1}\subseteq\C_{n-2}$, $\un{\s}{(n-1)_0}\notin\C_{n-1}$. Thus $\s\in\C_{n}$ by Remark \ref{remark:C_i} \ref{in C_i to C_i+1}.
			
			\item Suppose $\s\in B$. Then $\s^c=\{(0)_0,(i,0),(i,n-1)\}$ for some $i\in[m-1]$. Since $(0)_0\nsim(i,n-1)$ and $(i,0)\sim(i,n-1)$, it follows that $\s\in\Dt{2}{\gp}$ and $\un{\s}{0}\notin\Dt{2}{\gp}$. By Proposition \ref{proposition:first matching} \ref{sigma (not) in C iff}, $\s\in\C_1$.
			
			Let $1\le r\le n-1$. Then $(r)_0\in\s$. Since $i\ne0$ and $r\ne0$, $(r)_0\nsim(i,0)$. Hence $\un{(\di{\s}{(r)_0})}{(0)_0}\in\Dt{2}{\gp}$. By Proposition \ref{proposition:first matching} \ref{sigma (not) in C iff}, $\di{\s}{(r)_0}\notin\C_1$. Therefore $\s\in\C_{n}$ by Remark \ref{remark:C_i} \ref{in C_j+1}.
			
			\item Suppose $\s\in C$. Then $\s^c=\{(0)_0,(i,j-1),(i,j)\}$ for some $i\in[m-1],\,j\in[n-1]$. Using the same argument as in the converse part of the proof of Theorem \ref{theorem:K_m[]P_n}, we get $\s\in\C_n$.
		\end{itemize}
		
		We conclude that $\C_n=A\sqcup B\sqcup C$. Note that $|A|=1$, $|B|=m-1$, and $|C|=(m-1)(n-1)$. Thus, $|A|+|B|+|C| = 1+(m-1)+(m-1)(n-1)=n(m-1)+1$. Further, each $\s\in A\sqcup B\sqcup C$ has dimension $mn-4$. Hence the result follows by Theorem \ref{theorem:acyclic} and Corollary \ref{corollary:critical}. 
	\end{proof} 
	
	\section{Conclusion and Future Directions}\label{section:future_direction}
	This article focuses on total $2$-cut complexes of the $p$-th powers of cycle graphs $C_n^p$, the Cartesian product of complete graphs and path graphs $K_m \square P_n$, and the Cartesian product of complete graphs and cycle graphs $K_m \square C_n$.   
	
	We have determined the homotopy type of $\Dt{2}{C_n^p}$ for $n=2p+2$ and $n\ge3p+1$, thus proving Conjecture \ref{conjecture:shen}. For $2p+3\le n\le 3p$, we investigated $\Dt{2}{C_n^p}$ using SageMath by computing their homology groups (with coefficients in $\mathbb{Z}$). 
	Table \ref{tab:2-cut_powers_of_cycle_graphs} summarizes the homology data for $\Dt{2}{\c}$ for small values of $n$ and $p$. Blank entries in the table indicate that the corresponding complex $\Dt{2}{\c}$ is void. An entry of the form \homo{i}{\bt} denotes that the $i^{\text{th}}$ homology group is $\mathbb{Z}^{\bt}$.  Entries above and below the colored region correspond to $n = 2p+2$ and $n \geq 3p+1$ respectively, which we have already proved in Theorem \ref{theorem:Intro_powers_of_cycle}. 
	Entries in the colored region correspond to $2p+3\le n\le 3p$. 
	
	\begin{table}[h!]\tiny
		\centering
		\begin{tabular}{|p{.23cm}|*{6}{c|}}\hline 
			{\diagbox[linewidth=0.2pt,width=.6cm,height=.5cm]{$n$}{$p$}} & $3$ & $4$ & $5$ & $6$ & $7$ & $8$ \\\hline
			
			$8$ & \homo{2}{} & & & & & \\\hline
			
			$9$ & \cfill{\homo{4}{2}} & & & & & \\\hline
			
			$10$ & \homo{6}{} & \homo{3}{} & & & & \\\hline
			
			$11$ & \homo{7}{} & \cfill{\homo{5}{}} & & & & \\\hline
			
			$12$ & \homo{8}{} & \cfill{\homo{7}{3}} & \homo{4}{} & & & \\\hline
			
			$13$ & \homo{9}{} & \homo{9}{} & \cfill{\homo{7}{}} & & & \\\hline
			
			$14$ & \homo{10}{} & \homo{10}{} & \cfill{\homo{8}{}} & \homo{5}{} & & \\\hline
			
			$15$ & \homo{11}{} & \homo{11}{} & \cfill{\homo{10}{4}} & \cfill{\homo{8}{2}} & & \\\hline
			
			$16$ & \homo{12}{} & \homo{12}{} & \homo{12}{} & \cfill{\homo{10}{}} & \homo{6}{} & \\\hline
			
			$17$ & \homo{13}{} & \homo{13}{} & \homo{13}{} & \cfill{\homo{11}{}} & \cfill{\homo{9}{}} & \\\hline
			
			$18$ & \homo{14}{} & \homo{14}{} & \homo{14}{} & \cfill{\homo{13}{5}} & \cfill{\homo{12}{}} & \homo{7}{} \\\hline
			
			$19$ & \homo{15}{} & \homo{15}{} & \homo{15}{} & \homo{15}{} & \cfill{\homo{13}{}} & \cfill{\homo{11}{}} \\\hline
			
			$20$ & \homo{16}{} & \homo{16}{} & \homo{16}{} & \homo{16}{} & \cfill{\homo{14}{}} & \cfill{\homo{13}{3}} \\\hline
			
			$21$ & \homo{17}{} & \homo{17}{} & \homo{17}{} & \homo{17}{} & \cfill{\homo{16}{6}} & \cfill{\homo{15}{}} \\\hline
			
			$22$ & \homo{18}{} & \homo{18}{} & \homo{18}{} & \homo{18}{} &  \homo{18}{} & \cfill{\homo{16}{}} \\\hline
			
			$23$ & \homo{19}{} & \homo{19}{} & \homo{19}{} & \homo{19}{} & \homo{19}{} & \cfill{\homo{17}{}} \\\hline
			
			$24$ & \homo{20}{} & \homo{20}{} & \homo{20}{} & \homo{20}{} &  \homo{20}{} & \cfill{\homo{19}{7}} \\\hline
			
			$25$ & \homo{21}{} & \homo{21}{} & \homo{21}{} & \homo{21}{} &  \homo{21}{} & \homo{21}{} \\\hline
		\end{tabular}
		\caption{\small{Non-zero homology $\Dt{2}{\c}$}}
		\label{tab:2-cut_powers_of_cycle_graphs}
	\end{table}
	
	Based on the data in Table \ref{tab:2-cut_powers_of_cycle_graphs}, we propose the following conjecture about the homotopy type of $\Dt{2}{\c}$ for $2p+3\le n\le 3p$.
	
	\begin{conjecture}\label{conjecture:2-cut power cycle}
		Let $2p+3\le n\le 3p$, and let $r$ be a positive integer such that $2p+\dfrac{p}{r+1}<n\le2p+\dfrac{p}{r}$.
		\begin{enumerate}[label=(\roman*)]
			\item If $n=2p+\dfrac{p}{r}$, then $\Dt{2}{\c}\simeq \bigvee_{n-2p-1}\mathbb{S}^{n-2r-3}$.
			
			\item If $2p+\dfrac{p}{r+1}<n<2p+\dfrac{p}{r}$, then 
			$\Dt{2}{\c}\simeq\mathbb{S}^{n-2r-4}.$           
		\end{enumerate}
	\end{conjecture}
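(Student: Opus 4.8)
The plan is to follow the same discrete Morse theory framework used for the cases $n = 2p+2$ and $n \geq 3p+1$, building the acyclic matching $\M = \bigsqcup_{i=0}^{n-1}\M_i$ on $\Dt{2}{\c}$ from the element matchings on vertices $0, 1, \ldots, n-1$ (as in Equation \eqref{equation:sequence_of_element_matchings}), and then characterizing the critical faces $\C_n$ for the intermediate range $2p+3 \le n \le 3p$. The fundamental tool remains Proposition \ref{proposition:first matching}: a face $\s \in \C_1$ exactly when every disconnected $2$-set in $\s^c$ contains the vertex $0$. The new feature, which explains the appearance of the parameter $r$, is that for $2p+3 \le n \le 3p$ the complement of a critical face can contain several vertices non-adjacent to $0$ arranged in a more intricate pattern than in the two solved cases; the integer $r$ with $2p + \tfrac{p}{r+1} < n \le 2p + \tfrac{p}{r}$ should count how many ``gaps'' of non-adjacent vertices can be packed around the cycle. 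First I would re-examine Remark \ref{remark:range of m_sigma} and the analysis of $\m{\s} = \max(\s^c)$, extending Proposition \ref{proposition:n>=3p+1 and sigma in C_m-p} to pin down the precise form of $\s^c$ for faces surviving into $\C_1$ when $n$ lies in this middle band.

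The key steps, in order, would be as follows. \textbf{Step 1:} Establish the analogue of Propositions \ref{proposition:n>=3p+1 and sigma in C_m-p} and \ref{proposition:matched in m-p} for $2p+3 \le n \le 3p$, determining for each $\s \in \C_1$ which later sets $\C_j$ contain $\s$ in terms of the positions of the vertices of $\s^c$ that are non-adjacent to $0$. \textbf{Step 2:} Identify the critical faces $\C_n$ explicitly. I expect these to be indexed by configurations where $\s^c$ consists of $0$ together with a chain of ``almost-complete'' blocks separated by exactly the number of forbidden gaps dictated by $r$; the case distinction $n = 2p + \tfrac{p}{r}$ versus strict inequality should correspond to whether a boundary configuration is achievable, which is why the first case yields a wedge of $n-2p-1$ spheres while the second yields a single sphere. \textbf{Step 3:} Compute the dimension of each critical face. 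Since $\dim \s = |\s| - 1 = (n - |\s^c|) - 1$, the dimension $n - 2r - 3$ (resp.\ $n-2r-4$) forces $|\s^c| = 2r+2$ (resp.\ $2r+3$), so I would verify that critical complements have exactly these cardinalities, reflecting $r$ gaps each contributing roughly $2$ to the complement size. \textbf{Step 4:} Check that all critical faces share a common dimension within each case, so that Corollary \ref{corollary:critical} applies and yields a wedge of equidimensional spheres, then count them to obtain the multiplicities $n-2p-1$ and $1$.

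The main obstacle will be Step 2: proving that the matching $\M$ leaves \emph{exactly} the claimed faces critical, i.e.\ that every other face of $\Dt{2}{\c}$ is matched. In the two solved boundary cases the critical set collapsed to a single face, so the argument reduced to showing $\C_{p+1}$ or $\C_{n-p+1}$ is a singleton (Lemmas \ref{lemma:n=2p+2} and \ref{lemma:n>=3p+1}); here the critical set is genuinely larger and its structure depends sensitively on the arithmetic of $n$, $p$, and $r$ through the inequalities $2p + \tfrac{p}{r+1} < n \le 2p + \tfrac{p}{r}$. The delicate point is controlling, for a face whose complement has non-adjacent vertices beyond the first, whether adding or removing an intermediate vertex $i$ keeps one inside the current $\C_j$ (via Remark \ref{remark:C_i}\ref{in C_i to C_i+1} and \ref{in C_j+1}); the gap-packing constraint determines precisely when such a vertex $i$ admits a witness $v$ with $i \nsim v$ in $(\un{\s}{0})^c$, and handling all the mod-$n$ wrap-around cases uniformly across the band $2p+3 \le n \le 3p$ is where the bulk of the technical work will lie. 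A natural strategy to tame this is to stratify faces by $\m{\s}$ and by the gap pattern of $\s^c$, proving a refined inductive statement (one matching step at a time) that simultaneously tracks membership in $\C_j$ and the evolving gap structure, so that the equidimensionality needed for Corollary \ref{corollary:critical} emerges as a consequence rather than being checked case by case.
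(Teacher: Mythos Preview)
The paper does \emph{not} prove this statement: Conjecture~\ref{conjecture:2-cut power cycle} is stated in Section~\ref{section:future_direction} as an open conjecture, motivated solely by the SageMath homology computations recorded in Table~\ref{tab:2-cut_powers_of_cycle_graphs}. There is therefore no ``paper's own proof'' to compare against, and your proposal cannot be assessed as matching or diverging from one.

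What you have written is not a proof but a research plan, and you acknowledge as much when you say ``The main obstacle will be Step~2.'' The plan is a natural extrapolation of the paper's method (the same sequence of element matchings, the same use of Proposition~\ref{proposition:first matching} and Remark~\ref{remark:C_i}), and your heuristic that $|\s^c|$ should be $2r+2$ or $2r+3$ is consistent with the conjectured sphere dimensions. But the genuinely hard content --- characterizing $\C_n$ explicitly for all $n$ in the band $2p+3\le n\le 3p$ and all corresponding $r$ --- is precisely what is left open, and nothing in your outline explains \emph{why} the critical complements must have the specific gap pattern you predict, nor why the count collapses to $n-2p-1$ exactly when $r$ divides $p$. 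The analogues of Propositions~\ref{proposition:n>=3p+1 and sigma in C_m-p}--\ref{proposition:in C_n-p} you would need do not follow by the same arguments: those proofs repeatedly use $n\ge 3p+1$ to force $\m{\s}+p\pmod{n}<\m{\s}-2p$, an inequality that fails throughout the conjectural range. Until you supply replacements for those propositions that work when $2p+3\le n\le 3p$, the proposal remains a sketch of intentions rather than a proof.
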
     
	
	Observe that for fixed $p$ and each $n$ with $2p+3\le n\le 3p$, there exists a unique positive integer $r$ such that $2p+\dfrac{p}{r+1}<n\le2p+\dfrac{p}{r}$. Hence, Conjecture \ref{conjecture:2-cut power cycle} addresses all $n$ satisfying $2p+3\le n\le 3p$. 
	
	The powers of cycle graphs $\c$ are circulant graphs $C_n(\{1,2,\ldots,p\})$ and hence Cayley graphs of $\bb{Z}_n$. This raises the following natural question.
	\begin{question}
		What are the homotopy types of  total cut complexes and cut complexes of general circulant graphs? More broadly, what about arbitrary Cayley graphs?
	\end{question}
	
	We have also determined the homotopy types of $\Dt{2}{K_m\square P_n}=\D{2}{K_m\square P_n}$ and $\Dt{2}{K_m\square C_n}=\D{2}{K_m\square C_n}$. Using SageMath, we further investigated the total $3$-cut and $3$-cut complexes of these Cartesian products. 
	We computed their homology groups (with coefficients in $\mathbb{Z}$) for some small values of $m$ and $n$, which are presented in  Tables \ref{tab:total 3-cut Km[]Pn}, \ref{tab:3-cut Km[]Pn}, \ref{tab:total 3-cut Km[]Cn} and \ref{tab:3-cut Km[]Cn}. 
	The notation \homo{i}{\bt} denotes that the $i^{\text{th}}$ homology group is $\mathbb{Z}^{\bt}$, and $\ast$ indicates that the corresponding entry is not known.
	The patterns observed in these tables lead us to Conjectures \ref{conjecture:total 3-cut K_m box P_n} to \ref{conjecture 3-cut K_m box C_n}. 
	
	\begin{table}[h!]\tiny
		\centering
		\begin{tabular}{|p{.23cm}|*{5}{c|}}\hline
			\diagbox[linewidth=0.2pt,width=.6cm,height=.5cm]{$m$}{$n$}
			& $3$ & $4$ & $5$ & $6$ & $7$ \\\hline
			
			$2$ & \homo{0}{} & \homo{2}{3} & \homo{4}{6} & \homo{6}{10} & \homo{8}{15} \\\hline
			
			$3$ & \homo{3}{4} & \homo{6}{12} & \homo{9}{24} & \homo{12}{40} & \homo{15}{60} \\\hline
			
			$4$ & \homo{6}{9} & \homo{10}{27} & \homo{14}{54} & $\ast$ & $\ast$ \\\hline
			
			$5$ & \homo{9}{16} & \homo{14}{48} & $\ast$ & $\ast$ & $\ast$ \\\hline
			
			$6$ & \homo{12}{25} & \homo{18}{75} & $\ast$ & $\ast$ & $\ast$ \\\hline
			
			$7$ & \homo{15}{36} & $\ast$ & $\ast$ & $\ast$ & $\ast$ \\\hline
		\end{tabular}
		\caption{\small Non-zero homology of $\Dt{3}{K_m \square P_n}$}
		\label{tab:total 3-cut Km[]Pn}
	\end{table}
	
	\begin{conjecture}\label{conjecture:total 3-cut K_m box P_n}
		For $m\ge2$ and $n\geq 3$, $\tilde{H}_i(\Dt{3}{K_m \square P_n}) =\begin{cases}
			\bb{Z}^{\frac{1}{2}(n^2-3n+2)(m-1)^2}&\text{if }i=mn-6,\\
			0&\text{otherwise}.
		\end{cases}$
	\end{conjecture}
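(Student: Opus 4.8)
\emph{Proof strategy.} The natural route is to rerun, for $k=3$, the discrete Morse-theoretic machinery that proves Theorem~\ref{theorem:K_m[]P_n}. First record the analogue of Remark~\ref{remark:face of 2-cut complex}: a set $\s\subseteq V(K_m\square P_n)$ lies in $\Dt{3}{K_m\square P_n}$ if and only if $(K_m\square P_n)[\s^c]$ contains an independent set of size $3$; equivalently, $\s\notin\Dt{3}{K_m\square P_n}$ exactly when the independence number of $(K_m\square P_n)[\s^c]$ is at most $2$. One then proves the $k=3$ version of Proposition~\ref{proposition:first matching}: for an element matching using a vertex $v$, a face $\s$ is critical precisely when $\s\cup\{v\}\notin\Dt{3}{K_m\square P_n}$, i.e.\ when every independent triple of $\s^c$ contains $v$, together with the companion statements on how criticality is inherited under adding or deleting a vertex $u$; these go through with essentially the same proofs, reading ``independent triple'' for ``disconnected $2$-set''.

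Next I would build the acyclic matching $\M=\bigsqcup_{i=0}^{n-1}\M_{(i)_0}$ on $\Dt{3}{K_m\square P_n}$ by iterated element matchings over the vertices $(0)_0,(1)_0,\dots,(n-1)_0$ of the row $i=0$, exactly as in Section~\ref{subsection:Km_Pn}, producing nested sets $\C_0\supseteq\C_1\supseteq\cdots\supseteq\C_n$; acyclicity is automatic from Lemma~\ref{lemma:Jonsson acyclic}, and $\emptyset$ is matched with $\{(0)_0\}$ since $\Dt{3}{K_m\square P_n}$ is non-void for $m\ge2$, $n\ge3$. If the surviving set $\C_n$ is still too intricate to describe cleanly, one continues with a second block of element matchings over the row $i=1$. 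The heuristic reason to match out full rows is that, as when one passes from $k=2$ to $k=3$, each completed block of ``row matchings'' should raise the complement size of the surviving faces by one (for $k=2$, one block took complements from size $2$, the facet size, to size $3$, the critical size); since the target dimension $mn-6$ forces critical complements of size $5$, two blocks are expected to be needed.

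The heart of the argument, and the step I expect to be by far the hardest, is the exact combinatorial characterization of the critical faces. For $k=2$ the only obstruction inside $\s^c$ is a single non-edge, which is why the critical complements in Theorem~\ref{theorem:K_m[]P_n} have the clean form $\{(0)_0,(i,j-1),(i,j)\}$. For $k=3$ the relevant obstruction is an independent triple of $K_m\square P_n$, and such triples come in three genuinely different shapes --- three vertices in one row with consecutive gaps at least $2$, two vertices in one row together with a vertex of another row in a distinct column, and three vertices in pairwise distinct rows and distinct columns --- so the inductive bookkeeping (``which faces survive $\M_{(i)_0}$, given what survived $\M_{(0)_0},\dots,\M_{(i-1)_0}$'') requires a much finer case analysis, in the spirit of Propositions~\ref{proposition:C_i and C_i+1 for 1<=i<=p}--\ref{proposition:in C_n-p}. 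One expects the critical faces to be exactly those $\s$ with $\s^c$ of size $5$ assembled from the matching vertex together with two ``local'' independent configurations placed among the rows $\{1,\dots,m-1\}$ and the columns $\{1,\dots,n-1\}$, and the total count of such $\s$ to reorganize as $\tfrac12(n-1)(n-2)(m-1)^2$; pinning down this description --- showing both that every set of the predicted form survives all the matchings and that nothing else does --- is where all the real work lies.

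Once the critical faces are identified, the conclusion is immediate: every critical face has dimension $mn-6$, so Corollary~\ref{corollary:critical} gives $\Dt{3}{K_m\square P_n}\simeq\bigvee_{\beta}\mathbb{S}^{mn-6}$ with $\beta=|\C_n|$ (or $|\C_{2n}|$ if a second block is used), and the homology statement of the conjecture follows as soon as the count $\beta=\tfrac12(n^2-3n+2)(m-1)^2$ is verified. As sanity checks one would confirm the formula against the SageMath data in Table~\ref{tab:total 3-cut Km[]Pn} and against the already-known cases $\Dt{2}{K_2\square P_n}$ of \cite{Bayer2024TotalCutcomplex} and \cite{chandrakar2024topology}. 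The main obstacle, to reiterate, is the third step: the explicit description and enumeration of $\C_n$.
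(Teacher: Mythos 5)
There is a genuine gap here, but it is worth first noting the status of the statement: in the paper this is \emph{Conjecture}~\ref{conjecture:total 3-cut K_m box P_n}, not a theorem. The authors offer no proof at all --- only SageMath computations of the homology for small $m$ and $n$ (Table~\ref{tab:total 3-cut Km[]Pn}), from which they extrapolate the formula $\tfrac12(n^2-3n+2)(m-1)^2=\tfrac12(n-1)(n-2)(m-1)^2$ in dimension $mn-6$. So there is no ``paper's own proof'' to compare against; the question is whether your proposal actually closes the conjecture, and it does not.

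Your outline is a sensible and faithful extension of the machinery of Sections~\ref{section:preliminaries} and~\ref{subsection:Km_Pn}: the $k=3$ analogue of Remark~\ref{remark:face of 2-cut complex} and Proposition~\ref{proposition:first matching} (with ``independent triple'' replacing ``disconnected $2$-set''), iterated element matchings over the row $i=0$ via Lemma~\ref{lemma:Jonsson acyclic}, and Corollary~\ref{corollary:critical} to conclude once all critical faces sit in dimension $mn-6$. Your numerical target is also consistent with the table (e.g.\ $m=2,n=4$ gives $3$; $m=3,n=3$ gives $4$). But the entire mathematical content of the conjecture lies in the step you yourself flag as ``where all the real work lies'': proving that the surviving faces after your matching scheme are exactly a family of sets with complements of size $5$, and that this family has cardinality $\tfrac12(n-1)(n-2)(m-1)^2$. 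You do not exhibit the candidate critical complements explicitly, you do not prove that every such set survives all the matchings, and you do not prove that nothing else survives; you also leave open whether one block of row matchings suffices or a second block over row $i=1$ is needed, which changes what ``$\C_n$'' even means. For $k=2$ the analogous step occupies all of Proposition~\ref{proposition:Km_Pn not in C_n} and the proof of Theorem~\ref{theorem:K_m[]P_n}, and as you note the case analysis for $k=3$ is substantially more ramified because independent triples in $K_m\square P_n$ come in several shapes. Until that characterization is carried out, the statement remains a conjecture supported by computation, not a theorem.
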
 
	
	\begin{table}[h!]\tiny
		\centering
		\begin{tabular}{|p{.23cm}|*{4}{c|}}\hline
			\diagbox[linewidth=0.2pt,width=.6cm,height=.5cm]{$m$}{$n$} & $3$ & $4$ & $5$ & $6$ \\\hline
			
			\multirowcell{2}{$3$} & \homo{4}{2} & \homo{7}{3} & \homo{10}{4} & \homo{13}{5} \\
			
			& \homo{5}{6} & \homo{8}{21} & \homo{11}{45} & \homo{14}{78} \\\hline
			
			\multirowcell{2}{$4$} & \homo{7}{6} & \homo{11}{9} & \homo{15}{12} & \multirowcell{2}{$\ast$} \\
			
			& \homo{8}{12} & \homo{12}{40} & \homo{16}{84} & \\\hline
			
			\multirowcell{2}{$5$} & \homo{10}{12} & \homo{15}{18} & \multirowcell{2}{$\ast$} & \multirowcell{2}{$\ast$} \\
			
			& \homo{11}{20} & \homo{16}{65} & & \\\hline
			
			\multirowcell{2}{$6$} & \homo{13}{20} & \multirowcell{2}{$\ast$} & \multirowcell{2}{$\ast$} & \multirowcell{2}{$\ast$} \\
			
			& \homo{14}{30} & & & \\\hline
		\end{tabular}
		\caption{\small Non-zero homology of $\D{3}{K_m \square P_n}$}
		\label{tab:3-cut Km[]Pn}
	\end{table}
	
	\begin{conjecture}\label{conjecture:3-cut K_m box P_n}
		For $m, n\geq 3$,
		$\tilde{H}_i(\D{3}{K_m \square P_n}) = 
		\begin{cases}
			\mathbb{Z}^{\frac{1}{2}(m^2-3m+2)(n-1)} & \text{if } i = mn-5, \\
			\mathbb{Z}^{\frac{m}{2}(mn-m-2)(n-2)} & \text{if } i = mn-4, \\
			0 & \text{otherwise}.\\
		\end{cases}$
	\end{conjecture}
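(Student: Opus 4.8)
The plan is to establish Conjecture~\ref{conjecture:3-cut K_m box P_n} by discrete Morse theory, following the strategy of Theorems~\ref{theorem:K_m[]P_n} and~\ref{theorem:K_m[]C_n}, the essential new feature being that the critical faces will now occupy the two consecutive dimensions $mn-5$ and $mn-4$ rather than a single dimension. I first record the face criterion analogous to Remark~\ref{remark:face of 2-cut complex}: a set $\s\subseteq V(K_m\square P_n)$ lies in $\D{3}{K_m\square P_n}$ if and only if $\s^c$ contains a $3$-element subset whose induced subgraph is disconnected, i.e.\ has at most one edge. Writing $\g=K_m\square P_n$ and keeping the notation $(j)_0=(0,j)$, I would then define a sequence of element matchings on $\D{3}{\g}$ using the path vertices $(0)_0,(1)_0,\ldots,(n-1)_0$ as in Equation~\eqref{equation:sequence_of_element_matchings}; by Lemma~\ref{lemma:Jonsson acyclic} their union $\M$ is acyclic. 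If this single sweep does not collapse far enough, I would continue matching over the remaining vertices in a fixed order, again invoking Lemma~\ref{lemma:Jonsson acyclic} at each step.

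Next I would prove the $3$-cut analogues of Propositions~\ref{proposition:first matching} and~\ref{proposition:Km_Pn not in C_n}, now phrased in terms of disconnected $3$-sets in $\s^c$ instead of disconnected $2$-sets, and use them together with Remark~\ref{remark:C_i} to characterize the set $\C_n$ of critical faces. The goal is to show that $\C_n$ splits into two families: one consisting of faces $\s$ with $|\s^c|=3$ (dimension $mn-4$), and one consisting of faces $\s$ with $|\s^c|=4$ (dimension $mn-5$). A careful count of the admissible complements should give $\frac{m}{2}(mn-m-2)(n-2)$ faces in the first family and $\binom{m-1}{2}(n-1)=\frac12(m^2-3m+2)(n-1)$ in the second, matching the two exponents in the conjecture. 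By Theorem~\ref{theorem:acyclic}, $\D{3}{K_m\square P_n}$ is then homotopy equivalent to a CW complex with cells only in dimensions $mn-5$ and $mn-4$.

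The main obstacle is precisely what prevents a direct appeal to Corollary~\ref{corollary:critical}: because the critical faces live in two dimensions, the Morse chain complex is $0\to\mathbb{Z}^{c_{mn-4}}\xrightarrow{\partial}\mathbb{Z}^{c_{mn-5}}\to0$, and to conclude that both homology groups are \emph{free} of the stated ranks I must show that the Morse differential $\partial$ vanishes. I would do this by a gradient-path argument: first showing that no critical $(mn-5)$-face is a codimension-one face of any critical $(mn-4)$-face (equivalently, no admissible $4$-element complement contains an admissible $3$-element complement), so that the two families are combinatorially separated, and then ruling out longer alternating gradient paths between them. Once $\partial=0$ is established, the Morse complex gives $H_{mn-4}=\ker\partial=\mathbb{Z}^{c_{mn-4}}$ and $H_{mn-5}=\operatorname{coker}\partial=\mathbb{Z}^{c_{mn-5}}$, both free, which is the conjectured homology. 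I expect the delicate points to be the exact characterization of $\C_n$ and the exclusion of gradient paths connecting the two families; a useful consistency check throughout is that $(-1)^{mn-4}c_{mn-4}+(-1)^{mn-5}c_{mn-5}$ must equal the reduced Euler characteristic of $\D{3}{K_m\square P_n}$, which can be computed independently from the face criterion.
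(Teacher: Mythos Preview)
This statement is Conjecture~\ref{conjecture:3-cut K_m box P_n} in the paper and is \emph{not} proved there: it appears in Section~\ref{section:future_direction} as an open problem, supported only by the SageMath homology computations recorded in Table~\ref{tab:3-cut Km[]Pn}. There is therefore no proof in the paper to compare your proposal against.

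As an approach to the open problem, your outline is reasonable in spirit but contains genuine gaps. The passage from the $2$-cut to the $3$-cut setting is not routine: for $\D{3}{\g}$ the analogue of Proposition~\ref{proposition:first matching} is more delicate because ``every disconnected $3$-set in $\s^c$ contains $(0)_0$'' is a much weaker structural constraint than its $2$-cut counterpart, and the case analysis behind a putative analogue of Proposition~\ref{proposition:Km_Pn not in C_n} would be substantially longer. More seriously, your plan to show that the Morse differential $\partial$ vanishes is only a hope: observing that no critical $(mn-5)$-face is a codimension-one face of a critical $(mn-4)$-face rules out length-one gradient paths, but gradient paths of arbitrary length contribute to $\partial$, and excluding them typically requires either an explicit enumeration of all such paths or an auxiliary argument (for instance a filtration or a secondary matching) that forces cancellation. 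Without that step, Theorem~\ref{theorem:acyclic} yields only a two-term chain complex whose homology could in principle have torsion or smaller free rank than conjectured. In short, you have identified the right shape of argument but not supplied the decisive step; the paper itself offers no guidance here because the authors left the statement as a conjecture.
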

	
	\begin{table}[h!]\tiny
		\centering
		\begin{tabular}{|p{.23cm}|*{8}{c|}}\hline
			\diagbox[linewidth=0.2pt,width=.6cm,height=.5cm]{$m$}{$n$}
			& $3$ & $4$ & $5$ & $6$ & $7$ & $8$ & $9$ & $10$ \\\hline
			
			$2$ & void complex & \homo{2}{9} & \homo{4}{14} & \homo{6}{22} & \homo{8}{29} & \homo{10}{37} & \homo{12}{46} & \homo{14}{56} \\\hline
			
			$3$ & \homo{3}{} & \homo{6}{30} & \homo{9}{48} & \homo{12}{73} & \homo{15}{99} & $\ast$ & $\ast$ & $\ast$ \\\hline
			
			$4$ & \homo{6}{3} & \homo{10}{63} & \homo{14}{102} & $\ast$ & $\ast$ & $\ast$ & $\ast$ & $\ast$ \\\hline
			
			$5$ & \homo{9}{6} & \homo{14}{108} & $\ast$ & $\ast$ & $\ast$ & $\ast$ & $\ast$ & $\ast$ \\\hline
			
			$6$ & \homo{12}{10} & \homo{18}{165} & $\ast$ & $\ast$ & $\ast$ & $\ast$ & $\ast$ & $\ast$ \\\hline
			
			$7$ & \homo{15}{15} & $\ast$ & $\ast$ & $\ast$ & $\ast$ & $\ast$ & $\ast$ & $\ast$ \\\hline
		\end{tabular}
		\caption{\small Non-zero homology of $\Dt{3}{K_m \square C_n}$}
		\label{tab:total 3-cut Km[]Cn}
	\end{table}
	
	\begin{conjecture}\label{conjecture:total 3-cut K_m box C_n}
		\begin{enumerate}[label=(\roman*)]
			\item For $m\ge3$ and $n=3$, $\tilde{H}_i(\Dt{3}{K_m\square C_n})=\begin{cases}
				\bb{Z}^{\frac{1}{2}(m^2-3m+2)}&\text{if }i=mn-6,\\
				0&\text{otherwise}.
			\end{cases}$
			
			\item For $m\ge2$ and $n\in\{4,5\}$, $\tilde{H}_i(\Dt{3}{K_m\square C_n})=\begin{cases}
				\bb{Z}^{\frac{m-1}{2}((m-1)n^2-(m-3)n-2)}&\text{if }i=mn-6,\\
				0&\text{otherwise}.
			\end{cases}$
			
			\item For $m\ge2$ and $n\ge6$, $\tilde{H}_i(\Dt{3}{K_m\square C_n})=\begin{cases}
				\bb{Z}^{\frac{1}{2}((m-1)^2n^2-(m^2-4m+3)n+2)}&\text{if }i=mn-6,\\
				0&\text{otherwise}.
			\end{cases}$
		\end{enumerate}
	\end{conjecture}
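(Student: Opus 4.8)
The plan is to prove the conjecture by the discrete Morse-theoretic method used for Theorems \ref{theorem:K_m[]P_n} and \ref{theorem:K_m[]C_n}: construct an acyclic matching on $\Dt{3}{K_m\square C_n}$ whose critical faces all lie in dimension $mn-6$, enumerate them, and apply Theorem \ref{theorem:acyclic} together with Corollary \ref{corollary:critical}; this would in fact yield the stronger statement that $\Dt{3}{K_m\square C_n}$ is homotopy equivalent to a wedge of spheres $\mathbb{S}^{mn-6}$. Write $G=K_m\square C_n$. The first step is the combinatorial dictionary extending Remark \ref{remark:face of 2-cut complex}: $\s\in\Dt{3}{G}$ if and only if $\s^c$ contains a \emph{disconnected $3$-set}, i.e.\ three pairwise non-adjacent vertices; equivalently $\s\notin\Dt{3}{G}$ if and only if $\alpha(G[\s^c])\le 2$. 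By the adjacency rule recalled in Section \ref{subsection:Km_Cn}, any two vertices with equal $C_n$-coordinate are adjacent, so a disconnected $3$-set uses three distinct $C_n$-coordinates and its restriction to each copy of $C_n$ is an independent set of that cycle.

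Next I would order $V(G)$ beginning with the $n$ vertices $(0)_0,(1)_0,\ldots,(n-1)_0$ of the $0$-th copy of $C_n$ and then sweeping through the remaining copies, and form the associated sequence of element matchings as in \eqref{equation:sequence_of_element_matchings}; by Lemma \ref{lemma:Jonsson acyclic} their union is acyclic. An analogue of Proposition \ref{proposition:first matching} shows that after matching with $(0)_0$ a face $\s$ can be critical only if every disconnected $3$-set of $G[\s^c]$ contains $(0)_0$, and analogues of Proposition \ref{proposition:Km_Cn not in C_n} are then used to push surviving faces out through the later element matchings. Since for $k=3$ matching on a single copy of $C_n$ need not force all critical complements to have the same size (unlike the $k=2$ case), one must continue with further element matchings using vertices of the other copies, processed in a carefully chosen order and controlled by inductive invariants in the spirit of Propositions \ref{proposition:C_i and C_i+1 for 1<=i<=p}--\ref{proposition:in C_n-p}, until the only surviving complements are ``edge-minimal'' sets $\{(0)_0\}\cup S$ with $|S|=4$ --- intuitively, $S$ a pair of edges lying in one or two copies of $C_n$, subject to the cyclic adjacency constraints and to the requirement that no disconnected $3$-set of $\{(0)_0\}\cup S$ avoids $(0)_0$. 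Every such face has $|\s^c|=5$, hence dimension $mn-6$, matching the conjectured degree; enumerating the configurations, with $n=3,4,5$ handled separately, should reproduce the three formulas.

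The main obstacle is precisely the design and analysis of the matching beyond the first copy so that \emph{every} critical face ends up in dimension $mn-6$. For $\Dt{2}$ this was automatic after matching on one copy of $C_n$, whereas for $\Dt{3}$ the faces left after that step occur in several dimensions, and the order in which vertices of the remaining copies are processed --- together with the precise form of the propagation lemmas --- must be tuned so that every non-minimal complement is matched away without producing critical faces of the wrong dimension; this is a considerably heavier case analysis than in Section \ref{subsection:Km_Cn}. A second, intertwined difficulty is that the independence structure of the cycle itself varies with $n$: $\alpha(C_3)=1$, $\alpha(C_4)=\alpha(C_5)=2$, and $\alpha(C_n)\ge 3$ for $n\ge 6$, so whether a disconnected $3$-set can lie entirely inside a single copy of $C_n$ depends on $n$, changing both the list of critical shapes and their count --- which is exactly why the conjecture splits into the ranges $n=3$, $n\in\{4,5\}$, and $n\ge 6$. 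Finally one should check the smallest cases directly, and --- if a dimension-homogeneous matching turns out to be too delicate --- a fallback is to allow critical cells in two consecutive dimensions and prove that the induced Morse boundary map vanishes, reducing the problem to a (harder) combinatorial computation.
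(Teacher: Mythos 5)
There is no ``paper's own proof'' to compare against here: the statement you are addressing is stated in the paper as a \emph{conjecture}, supported only by SageMath computations of homology groups for small $m$ and $n$ (Table \ref{tab:total 3-cut Km[]Cn}); the authors do not prove it. Your proposal is likewise not a proof but a research plan, and as such it has a genuine gap --- indeed, the entire mathematical content is deferred. The preliminary observations you make are correct ($\s\in\Dt{3}{G}$ iff $\s^c$ contains an independent $3$-set; such a set must use three distinct $C_n$-coordinates since vertices sharing a $C_n$-coordinate are adjacent in $K_m\square C_n$; the trichotomy $n=3$, $n\in\{4,5\}$, $n\ge6$ is plausibly governed by $\alpha(C_n)$), and the overall strategy of running the element matchings of Equation \eqref{equation:sequence_of_element_matchings} starting with the copy $(0)_0,\ldots,(n-1)_0$ is the natural extension of Sections \ref{subsection:Km_Pn}--\ref{subsection:Km_Cn}. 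But everything that would constitute the proof is missing: the analogues of Propositions \ref{proposition:first matching} and \ref{proposition:Km_Cn not in C_n} for $k=3$ are not stated or proved; the order of the later element matchings and the ``inductive invariants'' controlling them are not specified; the claim that the surviving complements are exactly the sets $\{(0)_0\}\cup S$ with $|S|=4$ of the shape you describe is not established; and the enumeration that is supposed to reproduce the three Betti-number formulas is not carried out. You yourself flag the central difficulty --- that after matching on one copy of $C_n$ the unmatched faces occur in several dimensions, unlike the $k=2$ case --- without resolving it, and your fallback (critical cells in two consecutive dimensions with vanishing Morse differential) is only mentioned, not executed.

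To turn this into a proof you would need, at minimum: (a) a precise $k=3$ analogue of Proposition \ref{proposition:first matching}\ref{sigma (not) in C iff} characterizing $\C_1$ as those $\s$ for which every independent $3$-set of $\s^c$ contains $(0)_0$; (b) a complete description of the vertex order for the remaining $(m-1)n$ vertices together with propagation lemmas showing each non-minimal face is matched; (c) a verified list of critical complements in each of the three regimes of $n$, with counts $\tfrac12(m^2-3m+2)$, $\tfrac{m-1}{2}((m-1)n^2-(m-3)n-2)$, and $\tfrac12((m-1)^2n^2-(m^2-4m+3)n+2)$ respectively, checked against the data in Table \ref{tab:total 3-cut Km[]Cn}; and (d) acyclicity, which follows from Lemma \ref{lemma:Jonsson acyclic} only if you genuinely use a sequence of element matchings throughout. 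As it stands, the proposal identifies the right framework and the right obstacles but proves nothing beyond what the conjecture's statement already presupposes.
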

	
	\begin{table}[h!]\tiny  
		\centering
		\begin{tabular}{|p{.23cm}|*{5}{c|}}\hline
			\diagbox[linewidth=0.2pt,width=.6cm,height=.5cm]{$m$}{$n$} & $4$ & $5$ & $6$ & $7$ & $8$ \\\hline
			
			\multirowcell{2}{$2$} & \homo{3}{} & \multirowcell{2}{\homo{6}{11}} & \multirowcell{2}{\homo{8}{25}} & \multirowcell{2}{\homo{10}{43}} & \multirowcell{2}{\homo{12}{65}} \\
			
			& \homo{4}{4} & & & & \\\hline
			
			\multirowcell{2}{$3$} & \homo{7}{6} & \homo{10}{5} & \homo{13}{6} & \multirowcell{2}{$\ast$} & \multirowcell{2}{$\ast$} \\
			
			& \homo{8}{12} & \homo{11}{31} & \homo{14}{64} & & \\\hline
			
			\multirowcell{2}{$4$} & \homo{11}{15} & \homo{15}{15} & $\ast$ & $\ast$ & $\ast$ \\
			
			& \homo{12}{24} & \homo{16}{61} & $\ast$ & $\ast$ & $\ast$ \\\hline
			
			\multirowcell{2}{$5$} & \homo{15}{28} & $\ast$ & $\ast$ & $\ast$ & $\ast$ \\
			
			& \homo{16}{40} & $\ast$ & $\ast$ & $\ast$ & $\ast$ \\\hline
			
		\end{tabular}\caption{\small Non-zero homology of $\D{3}{K_m \square C_n}$}
		\label{tab:3-cut Km[]Cn}
	\end{table}
	
	\begin{conjecture}\label{conjecture 3-cut K_m box C_n}
		\begin{enumerate}[label=(\roman*)]
			\item For $m\ge2$ and $n=4$, $\tilde{H}_i(\D{3}{K_m \square C_n}) =\begin{cases}
				\mathbb{Z}^{2m^2-5m+3} & \text{if}\  i = mn-5, \\
				\mathbb{Z}^{2m(m-1)} & \text{if}\ i = mn-4, \\
				0 & \text{otherwise}.\\
			\end{cases}$
			
			\item For $m=2$ and $n\ge5$, $\tilde{H}_i(\D{3}{K_m \square C_n}) =\begin{cases}
				\mathbb{Z}^{2n^2-8n+1} & \text{if } i = mn-4, \\
				0 & \text{otherwise}.\\
			\end{cases}$
		\end{enumerate}
	\end{conjecture}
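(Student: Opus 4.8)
\section*{Proof proposal for Conjecture \ref{conjecture 3-cut K_m box C_n}}

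The plan is to transport the discrete Morse machinery behind Theorems~\ref{theorem:K_m[]P_n} and \ref{theorem:K_m[]C_n} from the total $2$-cut complex to the $3$-cut complex $\D{3}{\gp}$, where $\gp=K_m\square C_n$ and $\s^c$ denotes the complement of $\s$ in $V(\gp)$. The first step is the face criterion for the $3$-cut complex, the analog of Remark~\ref{remark:face of 2-cut complex}: $\s\in\D{3}{\gp}$ if and only if $\s^c$ contains a \emph{disconnected $3$-set}, i.e.\ three vertices inducing at most one edge; equivalently $\s\notin\D{3}{\gp}$ precisely when every $3$-subset of $\s^c$ is connected. As before I would run the element matchings of Equation~\eqref{equation:sequence_of_element_matchings} along the zeroth row $(0)_0,(1)_0,\ldots,(n-1)_0$, so that $\M=\bigsqcup_{i=0}^{n-1}\M_{(i)_0}$ is acyclic by Lemma~\ref{lemma:Jonsson acyclic}. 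The first-matching lemma carries over with the same proof structure as Proposition~\ref{proposition:first matching}: $\s\in\C_1$ if and only if $\un{\s}{(0)_0}\notin\D{3}{\gp}$, which by the face criterion means every disconnected $3$-set in $\s^c$ contains $(0)_0$; I would then establish the $3$-cut analogs of parts (ii) and (iii) of that proposition, governing how adjoining or deleting a row-$0$ vertex interacts with membership in $\C_1$.

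With these tools I would propagate through the sets $\C_{i+1}$ using Remark~\ref{remark:C_i}, exactly as in the proofs of Proposition~\ref{proposition:C_i and C_i+1 for 1<=i<=p} and Theorem~\ref{theorem:K_m[]C_n}, and thereby describe the critical set $\C_n$. The decisive new feature, compared with the $2$-cut setting where every surviving complement had size $3$, is that critical complements now appear in two sizes: a critical face with $|\s^c|=3$ (dimension $mn-4$) is a disconnected $3$-set anchored at $(0)_0$ resisting every downward matching, while a critical face with $|\s^c|=4$ (dimension $mn-5$) is a $4$-set whose disconnected $3$-subsets all contain $(0)_0$ and which likewise survives the whole sequence. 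I expect the size-$3$ critical complements to parallel the families $A,B,C$ of Theorem~\ref{theorem:K_m[]C_n}, and the size-$4$ ones to form an additional family tied to the antipodal pairs of $C_4$. The targets of the count are: for $n=4$, exactly $2m(m-1)$ critical $3$-complements and $(m-1)(2m-3)=2m^2-5m+3$ critical $4$-complements; for $m=2$ and $n\ge5$, exactly $2n^2-8n+1$ critical $3$-complements and no critical $4$-complement. I expect the separate treatment of $n=4$ to be forced by the degeneracy that in $C_4$ each vertex has a unique non-neighbor within its cycle copy (its antipode $j\mapsto j+2$), and that this coincidence is precisely what lets the size-$4$ complements persist when $n=4$ while their analogs are matched away for $n\ge5$.

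The homology then follows by two different routes. For $m=2$ and $n\ge5$ all critical faces lie in the single dimension $mn-4=2n-4$, so Corollary~\ref{corollary:critical} gives $\D{3}{K_2\square C_n}\simeq\bigvee_{2n^2-8n+1}\bb{S}^{2n-4}$; in particular $\tilde H_{2n-4}=\bb{Z}^{2n^2-8n+1}$ and all other reduced homology vanishes, which is part (ii). For $n=4$ the critical faces occupy the two adjacent dimensions $mn-4$ and $mn-5$, so Corollary~\ref{corollary:critical} no longer applies and Theorem~\ref{theorem:acyclic} only produces a CW complex with $2m(m-1)$ cells in dimension $mn-4$ and $2m^2-5m+3$ cells in dimension $mn-5$. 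To obtain part (i) I must show that the integral Morse differential between these two dimensions vanishes; granting this, the Morse chain complex has zero boundary maps and yields $\tilde H_{mn-4}=\bb{Z}^{2m(m-1)}$ and $\tilde H_{mn-5}=\bb{Z}^{2m^2-5m+3}$ with no torsion.

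The hardest part will be this vanishing of the Morse differential for $n=4$. Since $\D{3}{K_m\square C_4}$ is pure of dimension $mn-4$, its top reduced homology is automatically free, and the reduced Euler characteristic fixes only the difference $\beta_{mn-4}-\beta_{mn-5}$; hence neither freeness of the top group nor an Euler-characteristic count can by itself rule out a nonzero differential of some rank $r$, and one genuinely has to prove $r=0$. I would approach this through the gradient-path description of the Morse differential, aiming to show that each critical $(mn-5)$-cell fails to be a face of any critical $(mn-4)$-cell and that every intermediate zigzag gradient path cancels in sign, so that the signed path count between each such pair is $0$. Should the direct gradient-path bookkeeping prove unwieldy, a fallback is to refine the matching with a secondary element matching off the zeroth row so as to separate the two critical families, or to build an explicit homotopy decomposition $\D{3}{K_m\square C_4}\simeq\bigl(\bigvee\bb{S}^{mn-4}\bigr)\vee\bigl(\bigvee\bb{S}^{mn-5}\bigr)$. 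Pinning down the vanishing of this differential, together with the clean two-size enumeration of the critical cells, is the core of the argument.
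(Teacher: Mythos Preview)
The statement you are addressing is a \emph{conjecture} in the paper, not a theorem: it appears in Section~\ref{section:future_direction} among several conjectures proposed on the basis of SageMath computations (Table~\ref{tab:3-cut Km[]Cn}), and the paper offers no proof or proof sketch for it. There is therefore no argument in the paper against which to compare your proposal.

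As for the proposal itself, it is a reasonable outline of how one might \emph{attempt} the conjecture by analogy with the paper's methods, but it is not close to a proof. The two substantive steps---the enumeration of critical cells and the vanishing of the Morse differential for $n=4$---are both asserted rather than established. For the enumeration, you write down the target numbers $2m(m-1)$, $2m^2-5m+3$, and $2n^2-8n+1$ without any mechanism for producing them from the matching; the families $A,B,C$ of Theorem~\ref{theorem:K_m[]C_n} were characterized there by a case analysis that relied heavily on the very rigid structure of $\C_1$ for the $2$-cut complex (every disconnected $2$-set in $\s^c$ contains $(0)_0$, forcing $\s^c$ to be tiny), and the $3$-cut analog is substantially looser---a complement can be large and still have all its disconnected $3$-sets contain $(0)_0$---so the case analysis will not transfer in any routine way. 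For the differential, you correctly identify that neither freeness of the top homology nor an Euler-characteristic argument suffices, but the gradient-path cancellation you propose is exactly the difficult combinatorial content, and nothing in the outline indicates why it should hold. In short, the proposal names the right shape of argument but does not supply either of its load-bearing parts; this is consistent with the paper's own assessment that the statement remains open.
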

	
	\section*{Acknowledgement}
	The first author is supported by HTRA fellowship by IIT Mandi, India. The second author is supported by the seed grant project IITM/SG/SMS/95 by IIT Mandi, India.
	
	\bibliographystyle{abbrv}
	\bibliography{ref}
	
	\addcontentsline{toc}{section}{References}
	
\end{document}